\documentclass[12pt,a4paper]{amsart}
\usepackage[utf8]{inputenc}
\usepackage{amsmath}
\usepackage{amsfonts}
\usepackage{amssymb}
\usepackage{amsthm}
\usepackage{mathrsfs}
\usepackage{color}
\usepackage{a4wide}
\usepackage{hyperref}

\newcommand{\el}{\mbox{$\mathcal{L}$}}

\newcommand{\ar}{\mbox{$\mathcal R$}}

\newcommand{\els}{\mbox{${\mathcal L}^{\ast}$}}

\newcommand{\ars}{\mbox{${\mathcal R}^{\ast}$}}
\newcommand{\art}{\mbox{$\widetilde{\mathcal{R}}$}}
\newcommand{\elt}{\mbox{$\widetilde{\mathcal{L}}$}}

\theoremstyle{plain}
\newtheorem{Thm}{Theorem}[section]
\newtheorem{Cor}[Thm]{Corollary}
\newtheorem{Lem}[Thm]{Lemma}
\newtheorem{Prop}[Thm]{Proposition}

\newtheorem{lemma}[Thm]{Lemma}

\theoremstyle{definition}		
\newtheorem{defn}[Thm]{Definition}
\newtheorem{Rem}[Thm]{Remark}

\newtheorem{Question}[Thm]{Question}

\begin{document}
\title{On graph products of monoids}

\begin{abstract} Graph products of monoids provide a common framework for direct and free products, and graph monoids (also known as free partially commutative monoids). If the monoids in question are groups then any graph product is a group. For monoids that are not groups, regularity is perhaps the first and most important algebraic property that one considers:  however, graph products of regular monoids are not in general regular. We show that  a graph product of regular monoids satisfies  the related, but weaker, condition of  being abundant. More generally, we show that the classes of left abundant and left Fountain monoids are closed under graph product.
As a very special case we obtain the earlier result of Fountain and Kambites that the graph product of right cancellative monoids is right cancellative. To achieve our aims we show that elements in (arbitrary) graph products have a unique Foata normal form, and give some useful reduction results; these may equally well be applied to groups as to the broader case of monoids.
\end{abstract}

\date{\today}

\keywords{monoid, graph product, regularity, abundancy}

\subjclass[2010]{Primary: 20M05, 20M10 Secondary: 20F05}

\thanks{The research was supported by Grant No.\ 12171380 of
the National Natural Science Foundation of China, and by Grant No. QTZX2182 of the Fundamental
Research Funds for the Central Universities.}

\author[Y. Dandan]{Yang Dandan}
\address{School of Mathematics and Statistics, Xidian University, Xi'an 710071, P. R. China }
\email{ddyang@xidian.edu.cn}

\author[V. Gould]{Victoria Gould}
\address{Department of Mathematics, University of York, Heslington, York, YO10 5DD, UK}
\email{victoria.gould@york.ac.uk}

\maketitle

\section{Introduction} \label{sec:intro}

Graph products  arise from many sources and provide an important and wide ranging construction. They are defined by presentations, where the edges of a simple, non-directed graph determine commutativity of  elements   associated with the vertices. Further details are given in Section~\ref{sec:preliminaries}. {\em Graph products of  monoids} are defined in the same way as {\em graph products of  groups}, a notion introduced by Green in her thesis \cite{Green:1990},  and generalise at one and the same time  free products, restricted direct products, free (commutative)
monoids  and graph monoids\footnote{The existing terminology is a little unfortunate. Graph monoids are a strict subclass of the class of graph products of monoids. Note also that graph groups should not be confused with  the fundamental groups of graphs of groups.}. The latter are graph products of free monogenic monoids,
and were introduced by Cartier and Foata \cite{cartier:1969} to study combinatorial problems for rearrangements of words; they have  been extensively studied by mathematicians and computer scientists, having  applications  to
the study of concurrent processes \cite{diekert:1990,diekert:1995}. Graph monoids are also known as
free partially commutative monoids, right-angle Artin monoids  and trace monoids (sometimes with the condition the underlying graph is finite);  corresponding terminology applies in the case for groups.  Graph  groups were first defined by Baudisch \cite{baudisch:1981}; for a recent survey  see \cite{duncan:2020} and for the analogous notion for  {\em inverse semigroups} see \cite{daCosta:2003, diekert:2008}.

 Although mentioned in \cite{Green:1990} and in other earlier works focussing on groups, graph products of monoids per se were first defined in \cite{dacosta:2001}, and have subsequently been studied in various contexts, e.g. \cite{dacosta:2001, fohry:2006}.
 Much of the existing work in graph products of monoids, and groups, has been to show that various properties  are preserved under graph product, see e.g. \cite{hermiller:1995, diekert:2008(1), dacosta:2001(2), karpuz:2016}. These  properties are often  of algorithmic type, for example, automaticity \cite{hermiller:1995, dacosta:2001(2)}. In a different direction,  articles such as \cite{antolin:2015,barkauskas:2007,fountain:2009} consider  algebraic conditions. Of particular interest to us here is that Fountain and Kambites \cite{fountain:2009} show that a graph product of right cancellative monoids is right cancellative.

A monoid $M$ is {\em regular} if for any $a\in M$ there is a  $b\in M$ such that $a=aba$; note that   $ab,ba$ are, respectively, idempotent left and right identities for $a$. From an algebraic point of view, regularity is often the first property to look for in a monoid. Yet, it is easy to see that only in very special cases will a graph product of regular monoids be regular.

 The aim of this paper is easy to state. We consider two properties that each provide a natural  weakening of  regularity, and show that the classes of monoids satisfying
 these properties are closed under graph product. In general, the properties we consider provide the natural framework to study classes of monoids that need not be regular, but which have behaviour strongly influenced by idempotent elements.   We first prove:

 \bigskip
 {\bf Theorem~\ref{thm:mainm}} {\em The graph product of left abundant monoids is left abundant}.

 \bigskip
  A monoid $M$ is {\em left abundant} if every principal left ideal is projective  (so that sometimes a left abundant monoid is called left PP \cite{fountain:1977}).  This property may handily be expressed by saying that every $\ars$-class  of $M$ contains an idempotent. We define the relation $\ars$  in Section~\ref{sec:preliminaries}; it suffices to say here that $\ars$ contains Green's relation $\ar$, whence it follows immediately that regular semigroups are left abundant.
  We note that a monoid is a single $\ars$-class if and only if it is right cancellative.  Certainly then such monoids are abundant. The above mentioned result of \cite{fountain:2009} easily follows.

 \bigskip

   {\bf Corollary~\ref{cor:canc}} \cite[Theorem 1.5]{fountain:2009} {\em
The graph product  of right cancellative monoids  is  right cancellative.}

 \bigskip

Our second main result is:

 \bigskip
 {\bf Theorem~\ref{thm:fountain}} {\em The graph product of left Fountain monoids is left Fountain}.

 \bigskip
 
One way to define a {\em left Fountain}  (also known as {\em weakly left abundant}, or {\em left semiabundant}) monoid $M$ is to say that every $\art$-class  of $M$ must contain an idempotent; we give further details in
 Section~\ref{sec:preliminaries}.  Here  $\art$ is a relation containing $\ars$, whence it is clear that left abundant monoids are left Fountain. As for left abundancy, there is a natural approach to left Fountainicity using principal one-sided ideals. Again  as for left abundancy, such semigroups
 arise independently from a number of sources. They (and their two-sided versions) appear in the
work
 of de Barros
\cite{deB}, in that of Ehresmann on certain small ordered categories
\cite{RE}
and in the thesis of El Qallali \cite{E}. A systematic study of
such semigroups was initiated by Lawson, who establishes in \cite{lawson:1991}
the connection with Ehresmann's work. A useful source for the genesis of these ideas is Holling's survey \cite{hollings:2009}. We note here that the class of left Fountain monoids contains a number of important subclasses: we have mentioned left abundant, but we also  have left ample and left restriction \cite{hollings:2009}.  The study of left abundant monoids,  left Fountain monoids, their two-sided versions, and monoids in related classes, continues to provide one focus in algebraic semigroup theory. Some results show similarities  with
the structure of regular and inverse monoids \cite{guo:2005,fountaingomesgould:2009}, whereas others illustrate significantly different  behaviour \cite{kambites:2011,wang:2016,branco:2018}.

In order to prove Theorems~\ref{thm:mainm} and \ref{thm:fountain}  we have considerable work to do to get a grip on normal forms of elements of graph products. Essentially, the difficulty in the transition from graph monoids to graph products of monoids lies in the fact that for the broader concept the group of units of the monoids in question need not be trivial. Some of our techniques and results concerning normal forms and reduction of products of words may be of independent interest. In particular, in Proposition~\ref{lem:leftreducedform}, we establish that elements in graph products of monoids have a left Foata normal; previously this was an important tool in the study of graph monoids, and the same holds here.

The structure of this paper is as follows. In Section~\ref{sec:preliminaries} we give the necessary definitions and gather together the results we need from the literature. In Section~\ref{sec:lfnf} we begin our analysis of the form of words, and how these behave with respect to products. We establish the left Foata normal form for elements of graph products, not relying on any assumption of cancellativity.  In the next two sections we build a suite of techniques that allow us to simplify the words we need to consider when determining the relation $\ars$, these then enable us eventually to prove Theorem~\ref{thm:mainm}.
In Section~\ref{sec:Fountainicity} we use the earlier techniques, together with a further  analysis of words, to establish Theorem~\ref{thm:fountain}.  There is a corresponding notion of graph product for {\em semigroups}; the behaviour of the resulting semigroup  is similar to that of  a graph monoid and hence sheds some of the technical difficulties we encounter in graph products of monoids.  We apply our results to the semigroup case in  Section~\ref{sec:application},  and mention  a number of other applications. We finish with some open questions.

\section{Preliminaries}\label{sec:preliminaries}

We outline the notions required to read this article. For further details, we recommend the classic texts \cite{cliffordpreston} and \cite{howie:1995}.

\subsection{Presentations and graph products of monoids} We begin with an account of the notion on which this article is based: that of graph product of monoids. They are determined by monoid presentations. Let $X$ be a set. The  {\em free monoid} $X^*$ on $X$ consists of all words over $X$ with operation of juxtaposition.
We denote a  non-empty word  by $x_1\circ \cdots \circ x_n$ where $x_i\in X$ for $1\leq i\leq n$; we also
 use $\circ$ for juxtaposition of words. The empty word is denoted by $\epsilon$ and is the identity of $X^*$. Throughout, our convention is that if we say $x_1\circ\cdots \circ x_n\in X^*$, then we mean that $x_i\in X$ for all $1\leq i\leq n$, unless we explicitly say otherwise. We write $|x|$ for the length of a word $x=x_1\circ\cdots \circ x_n\in X^*$  and  denote by $x^r$  the word $x_n\circ \cdots \circ x_1\in X^*$.

A {\em monoid presentation} $\langle X\mid R\rangle$, where $X$ is a set and $R\subseteq  X^*\times X^*$, determines the monoid $X^*/R^{\sharp}$, where $R^{\sharp}$ is the congruence on $X^*$ generated by $R$. In the usual way, we identify $(u,v)\in R$ with the formal equality $u=v$ in a presentation $\langle X\mid R\rangle$.

We now define  graph products of monoids  \cite{Green:1990,dacosta:2001}.  Let $\Gamma=\Gamma(V,E)$ be a  simple,  undirected,  graph with no loops.
Here $V$ is a  non-empty set of {\em vertices} and  $E\subseteq V_2$ is  the set of {\em edges} of  $ \Gamma$, where $V_2$ is the set of $2$-element subsets of $V$.
We think of $\{ \alpha,\beta\}\in E$ as joining the vertices  $\alpha,\beta\in V$.
For notational reasons we denote an edge $\{ \alpha,\beta\}$ as $(\alpha,\beta)$ or $(\beta,\alpha)$; since our graph is undirected we are  identifying $(\alpha,\beta)$ with $(\beta,\alpha)$.

 \begin{defn}\label{defn:graphprodmonoids}  Let
$\Gamma=\Gamma(V,E)$ be a graph and let $\mathcal{M}=\{M_\alpha: \alpha\in V\}$ be a set of mutually disjoint monoids. We write $1_{\alpha}$ for the identity of $M_{\alpha}$ and put $I=\{ 1_{\alpha}: \alpha\in V\}$.   The {\em graph product} $\mathscr{GP}=\mathscr{GP}(\Gamma,\mathcal{M})$ of
 $\mathcal{M}$   with respect to $\Gamma$  is
the monoid defined by the presentation

 \[\mathscr{GP}=\langle X\mid R  \rangle\]
 where $X=\bigcup_{\alpha\in V}M_\alpha$ and $R= R_{id}\cup R_v\cup R_e$ are given by:

\[\begin{array}{rcl}R_{id}&=&\{   1_\alpha=\epsilon:  \alpha\in V\},\\

R_v&=&\{   x\circ y=xy:  \ x,y\in M_\alpha,\alpha\in V\},\\

R_e&=& \{ x\circ y=y\circ x: x\in M_\alpha,\, y\in M_\beta, (\alpha,\beta)\in E)\}.\end{array}\]
\end{defn}

The monoids $M_{\alpha}$ in Definition~\ref{defn:graphprodmonoids} are known as {\em vertex monoids}.
Throughout we assume $|V|\geq 2$, as otherwise $\mathscr{GP}$ is isomorphic to the single vertex monoid. We denote the
$R^{\sharp}$-class of $w\in X^*$ in $\mathscr{GP}$ by $[w]$. It is worth noting that there are various different ways to set up graph products, which all yield equivalent constructions. In particular, if one starts with monoids that are groups, the process above yields the graph product of groups.

The main focus of this article is on  monoids, although we briefy visit graph products of {\em semigroups} in Section~\ref{sec:application}. Free  products of semigroups, and a discussion of their universal properties, may be found in \cite{cliffordpreston,howie:1995}. Free products of monoids
may be viewed as a special case of an amalgamated free product of semigroups; this is commented on explicitly in \cite[p. 266]{howie:1995}. Here we remark that a free product of monoids is a graph product  for a graph $\Gamma(V,\emptyset)$.

We now touch on the other extreme where $E=V_2$.
Let $\mathcal{M}=\{M_\alpha: \alpha\in V\}$  be as above. The {\em restricted direct product} (or {\em direct sum})  $\oplus_{\alpha\in V}M_\alpha$ of $\mathcal{M}$ is defined by
\[\oplus_{\alpha\in V}M_\alpha=\{ f\in\Pi_{\alpha\in V}M_\alpha: \alpha f\neq 1_v\mbox{ for only finitely many }v\in V\}.\]
Clearly $\oplus_{\alpha\in V}M_\alpha$ is a submonoid of $\Pi_{\alpha\in V}M_\alpha$ and
${\oplus_{\alpha\in V}M_\alpha}=\Pi_{\alpha\in V}M_\alpha$ if and only if $V$ is finite. It is easy to see that a restricted direct product of monoids is a graph product for a graph
$\Gamma(V,V_2)$.

Graph products of monoids behave beautifully with respect to certain substructures, as we now demonstrate. To do so
we need some terminology.

\begin{defn}\label{defn:support} Let $\mathscr{GP}=\mathscr{GP}(\Gamma,\mathcal{M})$.
Let $s: X\rightarrow V$ be a map defined by $s(a)=\alpha$ if $a\in M_\alpha$.
The {\it support} $s(x)$  of  $x=x_1\circ \cdots \circ x_n\in X^*$ is
defined by \[s(x)=\{s(x_i): 1\leq i\leq n\}.\]
In particular, $s(\epsilon)=\emptyset$.
\end{defn}

Notice that when $s(x)$ is a singleton, we simply drop braces around it.
Below we use $[\,  ,\, ]$ for the equivalence class of a word under two different relations, so the reader should bear in mind the context in each case.

\begin{Prop}\label{lem:thedeletiontrick} Let $V'\subseteq V$ and let $\Gamma'=\Gamma(V',E')$ be the resulting full subgraph of $\Gamma$. Let $\mathscr{GP}'$ be the corresponding graph product of the monoids
$\mathcal{M}'=\{ M_\alpha:\alpha\in V'\}$.  Then $\mathscr{GP}'$ is a retract of $\mathscr{GP}$.
\end{Prop}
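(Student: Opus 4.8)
The plan is to exhibit the retraction explicitly, via a \emph{deletion} homomorphism, and to isolate the single point at which the full-subgraph hypothesis is really needed. Write $X'=\bigcup_{\alpha\in V'}M_\alpha$ for the generating set of $\mathscr{GP}'$, so that $X'\subseteq X$. First I would define a map $\phi$ from the generators $X$ of $\mathscr{GP}$ into $\mathscr{GP}'$ by sending $a\in M_\alpha$ to (the image of) $a$ if $\alpha\in V'$, and to $\epsilon$ otherwise; informally, $\phi$ deletes every letter whose support lies outside $V'$. By the universal property of the presentation $\mathscr{GP}=\langle X\mid R\rangle$, this extends to a homomorphism $\rho\colon\mathscr{GP}\to\mathscr{GP}'$ precisely when $\phi$ respects every relation of $R$. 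In the reverse direction, the inclusion $X'\hookrightarrow X$ should induce a homomorphism $\iota\colon\mathscr{GP}'\to\mathscr{GP}$.

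The map $\iota$ is the easy direction: the defining relations of $\mathscr{GP}'$, split as before into the families indexed by $V'$ and by $E'$, are letter for letter a subset of the relations $R$ of $\mathscr{GP}$ (because $V'\subseteq V$ and $E'\subseteq E$), so every relator of $\mathscr{GP}'$ already holds in $\mathscr{GP}$ and $\iota$ is well defined. The substance of the argument is to verify that $\rho$ is well defined, that is, that $\phi$ respects each relation in $R=R_{id}\cup R_v\cup R_e$. For $R_{id}$ and $R_v$ the check splits according to whether the vertex $\alpha$ lies in $V'$: if $\alpha\in V'$ the relation maps to the corresponding relation of $\mathscr{GP}'$, and if $\alpha\notin V'$ both sides are carried to $\epsilon$. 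The delicate family is $R_e$, a relation $x\circ y=y\circ x$ with $x\in M_\alpha$, $y\in M_\beta$ and $(\alpha,\beta)\in E$. Here I would distinguish three cases according to how many of $\alpha,\beta$ lie in $V'$. When both do, I need $(\alpha,\beta)\in E'$ so that the commuting relation survives in $\mathscr{GP}'$, and this is exactly where the hypothesis that $\Gamma'$ is the \emph{full} subgraph on $V'$ is used; when at most one endpoint lies in $V'$, at least one side loses a letter to $\epsilon$ and both images collapse to the same element.

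With $\rho$ and $\iota$ in hand, it remains to observe that $\rho\iota=\id_{\mathscr{GP}'}$: on a generator $a\in M_\alpha$ with $\alpha\in V'$ we have $\iota(a)=a$ and then $\rho(a)=a$, so the composite fixes every generator and hence all of $\mathscr{GP}'$. This simultaneously yields the retraction property and the injectivity of $\iota$ (if $\iota(u)=\iota(v)$ then $u=\rho\iota(u)=\rho\iota(v)=v$), so $\mathscr{GP}'$ embeds in $\mathscr{GP}$ as a retract. I expect the only genuine obstacle to be the well-definedness of $\rho$ on $R_e$, everything else being routine; I would therefore flag the fullness of $\Gamma'$ as the single hypothesis on which the whole construction turns, since if $\Gamma'$ were not full a commuting relation between two vertices of $V'$ could fail in $\mathscr{GP}'$ and $\rho$ would not descend to the quotient.
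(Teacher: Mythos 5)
Your proposal is correct and follows essentially the same route as the paper: define the deletion map on generators, verify it respects $R_{id}$, $R_v$ and $R_e$ (with the fullness of $\Gamma'$ invoked exactly in the $R_e$ case where both endpoints lie in $V'$), note the inclusion-induced map $\iota$ is well defined since the relations of $\mathscr{GP}'$ form a subset of $R$, and conclude from $\rho\circ\iota=\id_{\mathscr{GP}'}$ that $\iota$ is an embedding and $\iota\circ\rho$ is the desired retraction. This matches the paper's argument in both structure and detail.
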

\begin{proof}  Let
$\eta:=\eta_{V,V'}:X^*\rightarrow \mathscr{GP}'$ be the morphism extending the map defined on $X$
 by
\[x\eta=\left\{ \begin{array} {ll} [x]& s(x)\in V'\\
{}[\epsilon]&\mbox{else.}\end{array}\right.\]
 We show that $R^{\sharp}\subseteq \ker \eta$.

First, for any $\alpha\in V$, whether or not $\alpha\in V'$, we have
$1_\alpha\eta=[\epsilon]=\epsilon \eta$ so that $R_{id}\subseteq \ker\eta$.

To see that $R_{v}\subseteq \ker\eta$, let $\alpha\in V$ and let $u, v\in M_\alpha$.   If $\alpha\not \in V'$, then
 \[(u\circ v)\eta=(u\eta)(v\eta)=[\epsilon][\epsilon]=[\epsilon]=(uv)\eta.\]
  If $\alpha\in V'$, then
  \[(u\circ v)\eta=(u\eta)(v\eta)=[u][v]=[u\circ v]=[uv]=(uv)\eta.\]

 Now consider $u\in M_\alpha, v\in M_\beta$ with $(\alpha, \beta)\in E$. If neither $\alpha$ nor $\beta$ is in $V'$, then
  $$(u\circ v)\eta=(u\eta)(v\eta)=[\epsilon][\epsilon]=(v\eta)(u\eta)=(v\circ u)\eta.$$
  If $\alpha, \beta\in V'$ with $(\alpha, \beta)\in E$, then, as $\Gamma'$ is a  full subgraph of $\Gamma$, we have $(\alpha, \beta)\in E'$, so that
  $$(u\circ v)\eta=(u\eta)(v\eta)=[u][v]=[u\circ v]=[v\circ u]=[v][u]=(v\eta)(u\eta)=(v\circ u)\eta.$$ If $\alpha\in V'$ but $\beta\not \in V'$ then
  $$(u\circ v)\eta=(u\eta)(v\eta)=[u][\epsilon]=[\epsilon][u]=(v\eta)(u\eta)=(v\circ u)\eta$$
 and dually if  $\alpha\not \in V'$ but $\beta\in V'$. Thus $R_e\subseteq \ker\eta$.

It follows that   $R^{\sharp}\subseteq \ker \eta$ and so
$\overline{\eta}:=\overline{\eta}_{V,V'}:\mathscr{GP}\rightarrow \mathscr{GP}'$ given by
$[w]\overline{\eta}=w\eta$ is a well defined morphism.

 It is easy to see  that $\iota:=\iota_{V',V}:  \mathscr{GP}'\rightarrow  \mathscr{GP}$
 such that $[w]\iota=[w]$ is well defined, and  by considering $\iota\eta$ it is clear that $\iota$ is an embedding.  It is then immediate that
 and $\eta\iota$ is a retraction of $\mathscr{GP}$ onto a submonoid $\mathscr{GP}'\iota$.
\end{proof}

We identify $\mathscr{GP}'$ with its image under $\iota$ and  regard $\mathscr{GP}'$ as a submonoid of $\mathscr{GP}$.

 \begin{Rem}\label{rem:retract} Let $\alpha\in V$. By taking $V'=\{\alpha\}$  in Proposition \ref{lem:thedeletiontrick}, we  immediately see that $M_\alpha$ is naturally embedded in $\mathscr{GP}$ via
 $\iota_\alpha:M_\alpha\rightarrow \mathscr{GP}$, where for $x\in M_\alpha$ we have $x\iota_\alpha= [x]$.
\end{Rem}

\begin{Prop} A graph product $\mathscr{GP}=\mathscr{GP}(\Gamma,\mathcal{M})$ is a direct limit of the graph products corresponding to the finite full subgraphs of $\Gamma$.
\end{Prop}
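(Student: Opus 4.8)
The plan is to exhibit $\mathscr{GP}(\Gamma,\mathcal{M})$ as the colimit of a directed system built from its finite full subgraphs, using the retraction and embedding maps from Proposition~\ref{lem:thedeletiontrick}. First I would let $\Lambda$ denote the set of all finite subsets of $V$, directed by inclusion. For each finite $W\in\Lambda$, let $\Gamma_W$ be the full subgraph on $W$ and $\mathscr{GP}_W$ the associated graph product; by Proposition~\ref{lem:thedeletiontrick} each $\mathscr{GP}_W$ embeds in $\mathscr{GP}$ via $\iota_{W,V}$, and whenever $W\subseteq W'$ we likewise have an embedding $\iota_{W,W'}:\mathscr{GP}_W\rightarrow\mathscr{GP}_{W'}$. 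I would first check that these maps form a direct system, i.e.\ that $\iota_{W,W}=\id$ and $\iota_{W,W'}\iota_{W',W''}=\iota_{W,W''}$ for $W\subseteq W'\subseteq W''$; this is routine since each embedding is just the ``inclusion of words'' map $[w]\mapsto[w]$, so composition is literal identity on representatives.

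Next I would verify that $\bigl(\mathscr{GP},(\iota_{W,V})_{W\in\Lambda}\bigr)$ is a cocone over this system, which amounts to the compatibility $\iota_{W,W'}\iota_{W',V}=\iota_{W,V}$, again immediate from the word-level description. The substance is the universal property: given any monoid $N$ together with a compatible family of morphisms $\phi_W:\mathscr{GP}_W\rightarrow N$ (so $\iota_{W,W'}\phi_{W'}=\phi_W$), I must produce a unique morphism $\phi:\mathscr{GP}\rightarrow N$ with $\iota_{W,V}\phi=\phi_W$ for all $W$. Here I would use the crucial observation that every element $[w]$ of $\mathscr{GP}$ has a representative word $w$ of finite length, hence finite support $s(w)\subseteq V$; so $[w]$ lies in $\mathscr{GP}_{s(w)}\iota_{s(w),V}$ for the finite set $W=s(w)$. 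This lets me define $[w]\phi:=([w]\text{ viewed in }\mathscr{GP}_W)\phi_W$.

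The main obstacle is well-definedness of $\phi$, and I expect this to require a short argument rather than a one-liner. Two issues must be dressed: first, if $[w]=[w']$ in $\mathscr{GP}$ with possibly different supports, I must show the two recipes agree; choosing any finite $W$ containing $s(w)\cup s(w')$ reduces this to the assertion that the embeddings $\mathscr{GP}_{s(w)}\hookrightarrow\mathscr{GP}_W$ are genuine (injective) and compatible with $\phi_W$, which follows from Proposition~\ref{lem:thedeletiontrick} together with the cocone compatibility. Second, I must confirm $\phi$ is a monoid morphism: for a product $[w][v]=[w\circ v]$, both factors and the product have support inside $W:=s(w)\cup s(v)$, so the computation takes place entirely inside the single monoid $\mathscr{GP}_W$, where $\phi_W$ is already a morphism. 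Uniqueness of $\phi$ is then forced, since $\mathscr{GP}$ is generated by $\bigcup_\alpha M_\alpha$ and each generator lies in some $\mathscr{GP}_W$. I would present the finiteness-of-support point as the genuinely load-bearing step, with the rest being formal diagram-chasing that I would not grind through in full.
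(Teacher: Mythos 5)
Your proposal is correct and follows the same route the paper takes: the paper's proof simply observes that the finite full subgraphs form a directed set and declares the identification of $\mathscr{GP}$ with the direct limit of the $\mathscr{GP}'$ under the embeddings $\iota_{V',V''}$ to be routine. You have merely written out that routine verification in full, with the load-bearing point (finiteness of support of any representative word, plus injectivity of the embeddings from Proposition~\ref{lem:thedeletiontrick}) correctly identified.
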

\begin{proof} The finite full subgraphs of $\Gamma$ are partially ordered by inclusion, and form a directed set under union. It is routine to see that the direct limit of the graph products $\mathscr{GP}'$, corresponding to finite full subgraphs with vertex set $V'\subseteq V$ and embeddings $\iota_{V',V''}$ where $V'\subseteq V''$, is isomorphic to
$\mathscr{GP}$.
\end{proof}

We end this subsection by remarking that  there are universal approaches to describe graph products of monoids as indicated in \cite[Proposition 1.6]{fountain:2009}, in the same way as there are for direct and free products.

\subsection{Regular, abundant and Fountain monoids}

We will denote the set of idempotents of a monoid $M$ by
$E(M)$.
We recall that Green's relations $\ar$ is defined on $M$ by the rule $a\, \ar\, b$ if and only if $aM=bM$. Equivalently, $a=bt$ and $b=as$ for some $s,t\in M$, thus, $\ar$ is a relation of mutual divisibility.  The relation $\el$ is defined dually.
It is easy to see that $M$ is regular if and only if every $a\in M$ is $\ar$-related to an idempotent, and so from considerations of duality, if and only if every $a\in M$ is $\el$-related to an idempotent. Graph products do not behave well with regard to regularity. Let $M$ and $N$ be  regular monoids containing elements $m,n$ respectively which do not have one-sided inverses.  Then  $[m\circ n]$ is not regular in the graph product $\mathscr{GP}(\Gamma,\mathcal{M})$ where
$\Gamma=(\{ 1,2\},\emptyset)$ and $\mathcal{M}=\{ M_1,M_2\}$ (that is, in the free product). See \cite{dacosta:2001} for a discussion of regularity in graph products. We therefore consider  relations larger than $\ar$ and  $\el$
and ask whether they contain idempotents.

The relation $\ars$ on a monoid $M$ was first defined in \cite{mcalister:1976,pastijn:1975}. For elements $a,b\in M$ we have
$a\,\ars\, b$ if and only if $a\,\ar\, b$ in some over-monoid $N$ of $M$.
Equivalently, for any $x,y\in M$ we have
\[xa=ya\mbox{ if and only if }xb=yb.\]
Thus, $\ars$ is a relation of mutual cancellativity. A third equivalent condition is that the principal left ideals $Ma$ and $Mb$ are isomorphic under a left ideal isomorphism where  $a\mapsto b$ \cite{fountain:1982}.
It is easy to see that $\ar\subseteq \ars$ with equality if $M$ is regular.  The relation  $\els$ is the left-right dual of  $\ars$.

\begin{defn}\label{defn:abundant} A monoid $M$ is {\em left abundant} if every element in $M$ is $\ars$-related to an idempotent. The notion of {\em right abundant} is defined dually, and $M$ is {\em abundant} if it is both left and right abundant.
\end{defn}

Examples of (left) abundant monoids abound; regular monoids are, of course, abundant; for a favourite non-regular example  take the  monoid $M_n(\mathbb{Z})$ of $n\times n$ integer matrices under matrix multiplication \cite{fountain:square}.

\begin{Rem}\label{rem:a} It is easy to see that for $a\in M$ and $e\in E(M)$ we have that
$a\,\ars\, e$ if and only if $ea=a$ and for any $x,y\in M$
\[xa=ya\Rightarrow xe=ye.\]\end{Rem}

A  monoid $M$ is {\em right cancellative} if for all $a,b,c\in M$, from $ac=bc$ we deduce that $a=b$; {\em left cancellative} is dual and $M$ is {\em cancellative} if it is right and left cancellative. It is easy to see that $M$ is right cancellative if and only if it is a single $\ars$-class. Thus, a right cancellative monoid is left abundant. A right cancellative monoid has no non-identity idempotents, and need not be left cancellative.  It follows that left abundancy does not imply right abundancy, which  contrasts with the case for regularity.

The relation $\art$ arose from many sources, as indicated in the Introduction. It extends the relation $\ars$  and coincides with it  in the case where the monoid is left  abundant. For elements $a,b$ of a monoid $M$ we have that
\[a\,\art\, b\mbox{ if and only if }ea=a\Leftrightarrow eb=b\mbox{ for all }e\in E(M).\]
The relation $\elt$ is defined dually.

\begin{defn}\label{defn:wabundant} A monoid $M$ is  {\em left Fountain} if every element in $M$ is $\art$-related to an idempotent. The notion of
{\em right Fountain} is defined dually, and $M$ is {\em Fountain} if it is both left and right Fountain.
\end{defn}

\begin{Rem}\label{rem:f} Similarly to Remark~\ref{rem:a}, it is easy to see that for $a\in M$ and $e\in E(M)$ we have that
$a\,\art\, e$ if and only if $ea=a$ and for any $f\in E(M)$
\[fa=a\Rightarrow fe=e.\]\end{Rem}

Formerly, left Fountain was referred to as {\em weakly left abundant}, but in view of the perceived significance  the notion was renamed by Margolis and Steinberg in \cite{margolis:2017}. It is easy to see that $M$ is left Fountain if and only if for any $a\in M$ the intersection of the principal, idempotent generated,  right ideals containing $a$ is principal and idempotent generated. As for abundancy, there are many natural examples of (non-abundant) (left) Fountain semigroups.
These include  finite monoids such that every principal (left) ideal has at most
one idempotent generator, for instance, any finite monoid with commuting idempotents \cite{margolis:2017}.  For some recent examples of Fountain monoids, consisting of semigroups of tropical matrices, see \cite{gould:2019}.

\begin{Rem} The relation   $\ar$ on a monoid $M$ is easily seen to be a  left  congruence, for any $a,b,c\in M$, if $a\,\ar\,  b$  then $ca\,\ar\, cb$. Similarly, $\ars$ is a left congruence. The same is not true, in general, for $\art$, even for some quite natural monoids (see, for example,  \cite[Proposition 6.10]{gould:2019}). Thus we do not assume that $\art$ is a left congruence in our calculations.
\end{Rem}

\section{(left) Foata normal forms}\label{sec:lfnf}

Throughout we let  $\mathscr{GP}=\mathscr{GP}(\Gamma,\mathcal{M})$ and follow the notation as established in  Section~\ref{sec:preliminaries}.
We show that elements in $\mathscr{GP}$ may be written in a normal form we refer to as
left Foata normal form. Such normal forms were previously known for elements of graph monoids,
 that is, where all the vertex monoids are free monogenic. The existing proofs rely on cancellativity, which is not available to us. Moreover, the presence of units in our vertex monoids provides an added complication.

\begin{defn}\label{defn:reduction}
Let $x_1\circ \cdots \circ x_n\in X^*$. A {\it reduction}  step is one of:

(id) $x_1\circ \cdots \circ x_n\rightarrow x_1\circ \cdots x_{i-1}
\circ x_{i+1}\circ\cdots \circ x_n$ where  $x_i\in I$;

(v) $x_1\circ \cdots \circ x_n\rightarrow x_1\circ \cdots x_{i-1}
\circ x_ix_{i+1}\circ x_{i+2}\circ\cdots \circ x_n$ where  $x_i, x_{i+1}\in M_\alpha$ for some $\alpha\in V$.

A {\it shuffle}  is a step:

(e) $x_1\circ \cdots \circ x_n\rightarrow x_1\circ \cdots \circ x_{i-1}
\circ x_{i+1}\circ x_i\circ x_{i+2}\circ\cdots \circ x_n$ where  $(s(x_i),s(x_{i+1}))\in E$.
\end{defn}

\begin{defn}\label{defn:shuffle} Two words in $X^*$ are {\it shuffle equivalent} if one can be obtained from the other by applying relations in $R_e$, or, equivalently, by shuffle steps.\end{defn}

\begin{defn}\label{def of reduced}
A word $x=x_1\circ \cdots \circ x_n\in X^*$ is {\em pre-reduced} if it is not possible to apply a reduction step to $x$.

A word $x=x_1\circ \cdots \circ x_n\in X^*$ is   {\it reduced} if for all $1\leq i\leq n$, $x_i\not \in I$, and for all $1\leq i< j\leq n $ with  $s(x_i)=s(x_j)$, there exists some $i<k<j$ with $(s(x_i), s(x_k))\not \in E$.

We denote by $K$ the set of reduced words in $X^*$.
\end{defn}

If $x=x_1\circ \cdots \circ x_n\in X^*$ is reduced, then
any factor $x_i\circ x_{i+1}\circ\cdots\circ x_j$ is reduced. A reduced word is pre-reduced, but the converse is not necessarily true. For example,
$x_1\circ x_2\circ x_3$ where $s(x_1)=s(x_3)=\alpha$, $s(x_2)=\beta$,  $(\alpha,\beta)\in E$ and no $x_i$ is an identity, is pre-reduced, but not  reduced. Notice that $\epsilon$ is always reduced.
The following remarks are clear from Definition~\ref{def of reduced}.

\begin{Rem}\label{rem:reduced} A word is  reduced  if and only if any word shuffle equivalent is pre-reduced. In particular, any word shuffle equivalent to a  reduced word is reduced.
\end{Rem}

\begin{Rem} \label{observations for Fountainicity} Let $x=x_1\circ \cdots \circ x_n$  and $y=y_1\circ \cdots \circ y_n\in X^*$ be such that $x_i, y_i\not \in I$ and $s(x_i)=s(y_i)$  for all $1\leq i\leq n$. If one of  $x,x^r,y, y^r$ is  reduced, then so are all four.
\end{Rem}

We will frequently concatenate reduced words in $X^*$, wanting to know if the product is reduced. The next remark is useful in this regard.

\begin{Rem}\label{cor:concatenate} Let $x=x_1\circ\cdots \circ x_m,\, y=y_1\circ\cdots \circ y_n\in X^*$ be reduced. Then $x\circ y$ is {\em not}
 reduced  exactly if there exists
$i,j$ with $1\leq i\leq m,1\leq j\leq n$ such that
$s(x_i)=s(y_j)$ and for all $h,k$ with $i<h\leq m,1\leq k<j$
we have $(s(x_i),s(z))\in E$ where $z=x_h$ or $z=y_k$.
\end{Rem}

The lemma below is standard  but it is  worth making explicit.

\begin{Lem}\label{lem:reduce} Let $w\in X^*$. Applying reduction steps and shuffles leads in a finite number of steps to a reduced word $\overline{w}$ with
$[w]=[\overline{w}]$.
\end{Lem}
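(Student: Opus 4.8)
The plan is to show termination and correctness separately, since these are logically independent concerns. The statement claims two things: that the rewriting process halts after finitely many steps, and that the resulting word $\overline{w}$ is reduced with $[w]=[\overline{w}]$. The congruence claim $[w]=[\overline{w}]$ is the easy part: each reduction step (id) and (v) applies a relation from $R_{id}$ or $R_v$ respectively, and each shuffle (e) applies a relation from $R_e$; since $R^{\sharp}$ is the congruence generated by $R=R_{id}\cup R_v\cup R_e$, every single step preserves the $R^{\sharp}$-class, and hence so does any finite composite of steps. So the only real content is termination together with the guarantee that we can always reach a word to which no reduction step applies.

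First I would address termination by exhibiting a well-founded measure that strictly decreases under reduction steps and does not increase under shuffles. The natural candidate is the length of the word: both (id) and (v) strictly decrease the number of letters (by one), while a shuffle (e) preserves length. The subtlety is that shuffles alone do not decrease any obvious measure, so I cannot simply argue ``each step decreases length.'' The clean way around this is to observe that shuffle-equivalence classes are finite (a word of length $n$ has at most $n!$ shuffle-equivalent words, since a shuffle only permutes the letters), and to set up a two-level argument: reduction steps strictly decrease length, so there can be only finitely many of them; between consecutive reduction steps we perform shuffles within a single shuffle-equivalence class, and we only ever need finitely many shuffles to expose the next applicable reduction step. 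Alternatively, and more cleanly, I would argue by strong induction on the length $|w|$.

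The inductive argument runs as follows. If $w$ is reduced we are done. Otherwise, by Remark~\ref{rem:reduced}, $w$ being non-reduced means some word $w'$ shuffle-equivalent to $w$ is not pre-reduced, so a reduction step (id) or (v) applies to $w'$. We pass from $w$ to $w'$ by finitely many shuffles (this is finite because the shuffle-equivalence class is finite), then apply the reduction step to obtain a word $w''$ with $|w''|<|w|$ and $[w'']=[w']=[w]$. By the induction hypothesis applied to $w''$, finitely many further reduction steps and shuffles lead to a reduced word $\overline{w}$ with $[\overline{w}]=[w'']=[w]$. Since we used only finitely many steps to reach $w''$ and finitely many thereafter, the total is finite, completing the induction.

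The main obstacle, and the place deserving the most care, is precisely the interaction between the length-decreasing reduction steps and the length-preserving shuffles: one must rule out an infinite sequence consisting of shuffles only, or of shuffles interleaved in a way that never decreases length. The resolution rests on two facts that I would state explicitly: that each shuffle-equivalence class is finite (so we never loop forever among shuffles without making progress), and the characterization in Remark~\ref{rem:reduced} that a word fails to be reduced exactly when some shuffle-equivalent word fails to be pre-reduced (so non-reducedness always exposes, after finitely many shuffles, an applicable length-decreasing step). With these in hand the well-foundedness of $\mathbb{N}$ under the length measure finishes the argument. I would remark that the normal form itself need not be unique at this stage — the lemma asserts only existence — which is why no confluence or Church--Rosser argument is required here.
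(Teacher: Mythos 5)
Your proof is correct and follows essentially the same route as the paper's: reduction steps strictly decrease length, shuffle-equivalence classes are finite, and Remark~\ref{rem:reduced} guarantees that a non-reduced word exposes an applicable reduction step after finitely many shuffles, so induction on length (equivalently, the paper's explicit iteration $w_0,w_1,\dots,w_m$) terminates at a reduced word in the same $R^{\sharp}$-class. The only cosmetic difference is that you package the argument as strong induction while the paper iterates until all shuffle-equivalents are pre-reduced; the substance is identical.
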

\begin{proof}  Note that applying reduction steps to $w$ reduces its length.
There are finitely many words shuffle equivalent to $w$. Either these are all pre-reduced, and we
let $\overline{w}=w$, or we can apply a reduction step to some $w'$ shuffle equivalent to $w$.
Continue applying reduction steps to $w'$ until we arrive at a pre-reduced word $w_1$.  Notice that $|w_1|<|w|$. Repeat this process, obtaining a finite list of words $w=w_0, w_1,w_2,\hdots, w_m$
where all words shuffle equivalent to $w_m$ are pre-reduced. By Remark~\ref{rem:reduced},
$w_m$ is  reduced; let  $\overline{w}=w_m$.
\end{proof}

The next result is fundamental to our arguments.
As commented in \cite{fountain:2009}, it is the monoid version of Theorem 3.9 of Green \cite{Green:1990} (which can be applied directly to monoids). It can also  be deduced from \cite[Theorem 6.1]{dacosta:2001}; the reader should note that \cite{dacosta:2001}  uses different terminology to ours. However, we note that \cite{Green:1990} and
\cite{dacosta:2001} deal only with the case of a finite graph. Here we give the general result, calling upon Proposition~\ref{lem:thedeletiontrick}.

\begin{Prop}\label{shuffle}
Every element of $\mathscr{GP}$ is represented by a reduced word. Two reduced words represent the same element of $\mathscr{GP}$ if and only if they are shuffle equivalent.
An element $x\in [w]$ is of minimal length if and only if it is reduced.
\end{Prop}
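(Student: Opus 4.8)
The plan is to establish the three claims of Proposition~\ref{shuffle} in order, leaning almost entirely on the machinery already assembled. For the first claim, that every element is represented by a reduced word, I would simply invoke Lemma~\ref{lem:reduce}: given any $w \in X^*$, repeatedly applying reduction steps and shuffles produces a reduced $\overline{w}$ with $[w] = [\overline{w}]$, so the class $[w]$ contains a reduced representative. Since every element of $\mathscr{GP}$ is some $[w]$, this direction is immediate.

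The substantive content lies in the second claim, the ``confluence'' statement that two reduced words represent the same element if and only if they are shuffle equivalent. One direction is easy: shuffle steps are applications of relations in $R_e \subseteq R$, so shuffle-equivalent words certainly lie in the same $R^{\sharp}$-class, and by Remark~\ref{rem:reduced} a word shuffle-equivalent to a reduced word is again reduced. The converse is the heart of the matter and the main obstacle. As the authors flag, this is the monoid analogue of Green's normal-form theorem, and the honest route is a normal-form / rewriting argument showing that the relation ``shuffle equivalent'' on reduced words is a congruence whose quotient already satisfies the defining relations $R$, so that the natural map from $K/\!\!\sim$ onto $\mathscr{GP}$ is injective. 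Concretely, I would aim to show that if two reduced words $x$ and $y$ differ by a single application of a defining relation from $R = R_{id} \cup R_v \cup R_e$, followed by re-reduction, then the reduced forms are shuffle equivalent; induction on the number of relation-applications in an $R^{\sharp}$-derivation then gives the result. The delicate point is that a single $R_v$ or $R_{id}$ step can destroy reducedness and force a cascade of shuffles to re-reduce, so one must track carefully that the support pattern and the order of non-commuting letters is preserved up to shuffle. The cleanest way to organise this is via a well-defined length-and-support invariant together with a local confluence (diamond) lemma for the rewriting system generated by the reduction and shuffle steps, analogous to \cite{dacosta:2001}; the reduction to the finite case supplied by Proposition~\ref{lem:thedeletiontrick}, viewing any given pair of words inside the finite full subgraph on the union of their supports, lets us borrow the finite-graph confluence result rather than re-deriving it in full generality.

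For the third claim, that an element of $[w]$ is of minimal length exactly when it is reduced, I would argue both implications from what precedes. If a representative $u \in [w]$ is not reduced, then by Remark~\ref{rem:reduced} some word shuffle equivalent to $u$ is not pre-reduced, hence admits a reduction step of type (id) or (v); since shuffles preserve length while reduction steps strictly decrease it (as noted in the proof of Lemma~\ref{lem:reduce}), there is a strictly shorter representative of $[u] = [w]$, so $u$ is not minimal. Conversely, suppose $u \in [w]$ is reduced; let $v \in [w]$ be any representative, and let $\overline{v}$ be a reduced form obtained from $v$ by Lemma~\ref{lem:reduce}, so $|\overline{v}| \le |v|$. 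By the second claim, $u$ and $\overline{v}$ are shuffle equivalent, and shuffles preserve length, so $|u| = |\overline{v}| \le |v|$. As $v$ was an arbitrary representative, $u$ is of minimal length. This last part is routine once the congruence statement is in hand; the only genuine difficulty in the whole proposition is the uniqueness-up-to-shuffle result, and I expect essentially all the effort to concentrate there.
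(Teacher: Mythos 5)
Your proposal is correct and follows essentially the same route as the paper: the first claim via Lemma~\ref{lem:reduce}, the second by passing to the finite full subgraph on $s(w)\cup s(w')$ via Proposition~\ref{lem:thedeletiontrick} and invoking the finite-graph uniqueness result of \cite[Theorem 6.1]{dacosta:2001}, and the third by combining the second claim with Lemma~\ref{lem:reduce}. The only difference is presentational: you sketch the confluence argument before settling on the citation, whereas the paper goes straight to the reduction-to-finite-case-plus-citation.
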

\begin{proof} We have already shown the first part.

For the second, it is clear that if two reduced forms are shuffle equivalent then they represent the same element of $\mathscr{GP}$.
Conversely, suppose that $w,w'\in X^*$ are reduced forms and $[w]=[w']$ in
$\mathscr{GP}$. Let
$V'=s(w)\cup s(w')$ and let $\Gamma'=(V',E')$ be the corresponding full subgraph.
Let $X'=\bigcup_{\alpha\in V'}M_\alpha$ and let $\mathscr{GP}'$ be the corresponding graph product.
Clearly, $w,w'\in (X')^*$ are pre-reduced and from Proposition~\ref{lem:thedeletiontrick},
$[w]=[w']$ in
$\mathscr{GP}'$.  Theorem 1.1 of \cite{fountain:2009}, which may be deduced directly from original case for groups in \cite{Green:1990},  now tells us that that $w$ and $w'$ are shuffle equivalent in $\mathscr{GP}'$ and hence clearly shuffle equivalent in $\mathscr{GP}$.

For the final point, it is clear that a word $w\in X^*$ such that $|w|$ is minimal in $[w]$ is a reduced form. For the converse, suppose that $x\in X^*$ is a reduced form and  $[x]= [y]$. Choosing
$\overline{y}$ as in Lemma~\ref{lem:reduce} we have that $[x]=[y]=[\overline{y}]$ where
$\overline{y}$ is  reduced and $|y|\geq |\overline{y}|$. By the above $x,\overline{y}$ are shuffle equivalent and hence $|x|=|\overline{y}|\leq |y|$.
\end{proof}

\begin{defn}\label{defn:reducedform} If $x\in X^*$ and $[x]=[w]$ for a reduced word $w\in X^*$, then we say that $w$ is a {\em reduced form} of $x$.
\end{defn}

Notice that:

\begin{enumerate}\item  The equality $[x]=[y]$ where $x=x_1\circ\cdots \circ x_n$ and $y=y_1\circ \cdots \circ y_m$, does not, in general, imply that $s(x)=s(y)$. However, if both $x$ and $y$ are reduced, we must have $m=n$ and $s(x)=s(y)$, by Proposition~\ref{shuffle}.

\item  If $x_1\circ\cdots \circ x_n$ is  reduced  and $s(x_1\circ\cdots \circ x_n)$ is a complete subgraph,
then $s(x_i)\neq s(x_j)$ for all $1\leq i<j\leq n$, again by Proposition~\ref{shuffle}.
\end{enumerate}

We now show that, starting with a reduced word $x\in X^*$, and multiplying by a single letter $p$ from $X$, we have a narrow range of possibilities for
 any reduced form of  the product  $p\circ x$.

\begin{lemma}\label{product reduction}
Let $p\in X$, where $p\not \in I$,  and let $x=x_1\circ \cdots \circ x_n\in X^*$ be  reduced. Then one of the following occurs:

{\rm(i)}  $p\circ x_1\circ \cdots \circ x_n$ is  reduced;

{\rm(ii)} there exists $1\leq k\leq n$ such that $s(x_k)=s(p)$ and $(s(p), s(x_l))\in E$ for all $1\leq l\leq k-1$, and  $p\circ x_1\circ \cdots \circ x_n$ reduces to
\begin{equation}\label{eqn1} px_{k}\circ x_1\circ \cdots \circ x_{k-1}\circ x_{k+1}\circ \cdots \circ x_n\end{equation} and also to
\begin{equation}\label{eqn2}x_1\circ \cdots \circ x_{k-1}\circ px_k \circ x_{k+1}\circ \cdots \circ x_n.\end{equation}

Further, in Case {\rm(ii)}

{\rm (a)} if  $px_k$ is  not an identity  then (\ref{eqn1}) and (\ref{eqn2}) are reduced;

{\rm (b)} if  $px_k$ is  an identity then $p\circ x_1\circ \cdots \circ x_n$ reduces to the reduced word
\begin{equation}\label{eqn3} x_1\circ \cdots \circ x_{k-1} \circ x_{k+1}\circ \cdots \circ x_n.\end{equation}

Consequently, if $\alpha\in s(x)$ and $q\in X$ with $s(q)\neq \alpha$, then $\alpha$ must be in the support of any reduced form of $q\circ x$.
\end{lemma}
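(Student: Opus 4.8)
The plan is to extract the dichotomy straight from the concatenation criterion and then realise Case~(ii) by an explicit run of shuffle and reduction steps, certifying reducedness of each output via Remarks~\ref{observations for Fountainicity} and~\ref{rem:reduced}. Since $p \notin I$, the one-letter word $p$ is reduced, so I would apply Remark~\ref{cor:concatenate} to the concatenation $p \circ x$ of the reduced words $p$ and $x$. That criterion says $p \circ x$ fails to be reduced exactly when some index $j$ satisfies $s(x_j) = s(p)$ together with $(s(p), s(x_l)) \in E$ for all $1 \le l < j$. If no such $j$ exists we are in Case~(i). Otherwise I take $k$ to be the least such $j$, which supplies the index of Case~(ii); reducedness of $x$ in fact makes $k$ unique, for a second index $j' > k$ with the same two properties would, since $s(x_k) = s(x_{j'}) = s(p)$, force an intervening non-edge $k < i < j'$ with $(s(p), s(x_i)) \notin E$, contradicting $(s(p), s(x_i)) \in E$.

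In Case~(ii) I would build (\ref{eqn1}) and (\ref{eqn2}) by hand. As $(s(p), s(x_l)) \in E$ for $1 \le l \le k-1$, repeated shuffle steps~(e) move $p$ rightward past $x_1, \ldots, x_{k-1}$, and one vertex step~(v) then merges $p \circ x_k$ into the letter $px_k \in M_{s(p)}$, producing (\ref{eqn2}); since $px_k$ again lies in $M_{s(p)}$ it commutes with each of $x_1, \ldots, x_{k-1}$, so shuffling it to the front produces (\ref{eqn1}). Thus $p \circ x$ reduces to each of (\ref{eqn1}) and (\ref{eqn2}), and these are shuffle equivalent. For subcase~(a), where $px_k \notin I$, the word (\ref{eqn2}) is obtained from the reduced word $x$ by replacing the letter $x_k$ with the letter $px_k$ of equal support and still outside $I$; Remark~\ref{observations for Fountainicity} then yields that (\ref{eqn2}) is reduced, and Remark~\ref{rem:reduced} transfers this to the shuffle-equivalent (\ref{eqn1}).

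I expect the genuine work to be subcase~(b), where $px_k \in I$ and an identity step~(id) deletes $px_k$ from (\ref{eqn2}) to leave (\ref{eqn3}); as deleting an interior letter of a reduced word need not preserve reducedness, I would argue directly. Every letter of (\ref{eqn3}) is a non-identity, so were (\ref{eqn3}) not reduced there would be positions $i < j$ with $i, j \ne k$, $s(x_i) = s(x_j)$, and $(s(x_i), s(x_m)) \in E$ for every index $m$ strictly between $i$ and $j$ other than $k$. Reducedness of $x$ supplies a witness $i < m < j$ with $(s(x_i), s(x_m)) \notin E$; this $m$ cannot be a surviving intermediate index, so $m = k$ and $i < k < j$. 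But $i \le k-1$, so the Case~(ii) hypothesis gives $(s(p), s(x_i)) \in E$, and $s(x_k) = s(p)$ turns this into $(s(x_i), s(x_k)) \in E$, contradicting the witness. Hence (\ref{eqn3}) is reduced.

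For the concluding assertion I would set $p = q$. If $q \in I$ then $q \circ x$ reduces to $x$, and by Proposition~\ref{shuffle} every reduced form is shuffle equivalent to $x$ and so has support $s(x) \ni \alpha$. If $q \notin I$, every reduced form of $q \circ x$ is, again by Proposition~\ref{shuffle}, shuffle equivalent to the reduced word found above and thus shares its support: this is $q \circ x$ in Case~(i) and (\ref{eqn1}) in subcase~(ii)(a), each having support containing $s(x)$ and hence $\alpha$. The only case that can lose a vertex is~(ii)(b), whose reduced form (\ref{eqn3}) has support $\{s(x_l) : l \ne k\}$, omitting at most $s(x_k) = s(q)$; since the hypothesis $s(q) \ne \alpha$ gives $\alpha \ne s(x_k)$, the vertex $\alpha \in s(x)$ persists as some $s(x_l)$ with $l \ne k$. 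So $\alpha$ lies in the support of every reduced form of $q \circ x$.
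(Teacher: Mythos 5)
Your proof is correct and takes essentially the same approach as the paper: the dichotomy comes from Remark~\ref{cor:concatenate} (the paper cites the definition of reduced directly), Case (ii) is realised by the same shuffle-and-glue moves, and subcase (a) is settled by Remark~\ref{observations for Fountainicity}. The only local divergence is subcase (b), where you verify reducedness of (\ref{eqn3}) by a direct index-chasing argument, while the paper observes more briefly that (\ref{eqn3}) is a right factor of $x_k\circ x_1\circ \cdots \circ x_{k-1}\circ x_{k+1}\circ \cdots \circ x_n$, which is shuffle equivalent to the reduced word $x$; both justifications are valid.
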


\begin{proof}
Suppose that $p\circ x$ is not  reduced. Then, by the definition of reduced, $k$ as defined in the statement must exist. Clearly, for $p\circ x$, we may shuffle $x_k$ and glue it to $p$ to obtain $$px_{k}\circ x_1\circ \cdots \circ x_{k-1}\circ x_{k+1}\circ \cdots \circ x_n$$ which is shuffle equivalent to \[x_1\circ \cdots \circ x_{k-1}\circ px_k \circ x_{k+1}\circ \cdots \circ x_n.\]
If $px_k$ is not an identity, then these words are reduced, by Remark~\ref{observations for Fountainicity}.

If $px_k$ is an identity then  $p\circ x$ reduces to \[x_1\circ \cdots x_{k-1}\circ x_{k+1}\circ \cdots \circ x_n,\]
which is reduced, since it is a right factor of the word
$x_k\circ x_1\circ \cdots x_{k-1}\circ x_{k+1}\circ \cdots \circ x_n$, which is shuffle equivalent to the reduced word $x$.

 The final statement is clear if $q\in I$; if $q\notin I$ it  follows by examining the cases above.
\end{proof}

\begin{Cor}\label{product of reduced}
Let $x,y \in X^*$ where $y$ is reduced. If $\alpha\in s(y)$ but $\alpha\not \in s(x)$, then $\alpha$ must be in the support of any reduced form of $x\circ y$.
\end{Cor}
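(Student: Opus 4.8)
The plan is to bootstrap from the single-letter case, which is precisely the final sentence of Lemma~\ref{product reduction}, by inducting on the length $n=|x|$ of the left factor $x$. The one conceptual point to keep in mind throughout is that reduced forms need not be unique; however, by Proposition~\ref{shuffle} any two reduced forms of a given element are shuffle equivalent and hence have the same support, so ``the support of a reduced form of $w$'' is a well-defined subset of $V$, and every assertion below is to be read as holding for \emph{every} reduced form.

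For the base case $n=0$ we have $x=\epsilon$, so $x\circ y=y$ is itself reduced; every reduced form of $y$ is shuffle equivalent to $y$ by Proposition~\ref{shuffle} and therefore has support $s(y)$, which contains $\alpha$.

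For the inductive step, write $x=p\circ x''$ where $p\in X$ is the first letter of $x$ and $x''$ has length $n-1$. Since $s(x'')\subseteq s(x)$ and $\alpha\notin s(x)$, the pair $(x'',y)$ satisfies the hypotheses of the statement, so the induction hypothesis gives that $\alpha$ lies in the support of every reduced form of $x''\circ y$; fix any one such reduced form $\overline{w}$, so $\overline{w}$ is reduced with $\alpha\in s(\overline{w})$. As $R^{\sharp}$ is a congruence and $[x''\circ y]=[\overline{w}]$, we have $[x\circ y]=[p\circ x''\circ y]=[p\circ\overline{w}]$, whence the reduced forms of $x\circ y$ are exactly the reduced forms of $p\circ\overline{w}$. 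Now $s(p)\in s(x)$ while $\alpha\notin s(x)$, so $s(p)\neq\alpha$, and $\overline{w}$ is reduced with $\alpha\in s(\overline{w})$; the final clause of Lemma~\ref{product reduction} (valid for arbitrary $p\in X$, including $p\in I$) therefore applies to $p\circ\overline{w}$ and shows that $\alpha$ lies in the support of every reduced form of $p\circ\overline{w}$, that is, of $x\circ y$. This closes the induction.

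The argument is routine once the induction is correctly arranged, and the step I would flag as the main obstacle is exactly the non-uniqueness of reduced forms. One must not reason about ``the'' reduced form of the intermediate word $x''\circ y$, but rather fix an arbitrary reduced form $\overline{w}$, pass from $x\circ y$ to $p\circ\overline{w}$ using the congruence property, and invoke Lemma~\ref{product reduction} in its ``every reduced form'' formulation. Proposition~\ref{shuffle} is what guarantees both that the choice of $\overline{w}$ is immaterial and that the phrase ``support of a reduced form'' is meaningful in the first place.
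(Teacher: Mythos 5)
Your proposal is correct and is essentially the paper's own argument: induct on $|x|$, peel off the first letter, apply the inductive hypothesis to the remaining product to get a reduced form containing $\alpha$ in its support, and then invoke the final clause of Lemma~\ref{product reduction} (noting $s(p)\neq\alpha$). The only cosmetic difference is that you start the induction at $|x|=0$ while the paper starts at $|x|=1$; your explicit care over the non-uniqueness of reduced forms is a point the paper leaves implicit but does not change the argument.
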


 \begin{proof} Let $x=x_1\circ\cdots \circ x_m$ and  proceed by induction on $m$. If $m=1$ then the result is true by Lemma \ref{product reduction}. Suppose therefore that $m\geq 2$ and the result is true for $m-1$.  Let $z_1\circ\cdots\circ z_k$ be a reduced form of $x_2\circ \cdots \circ x_m\circ y$. Then $\alpha$ is in the support of  $z_1\circ\cdots\circ z_k$ by assumption, and so $\alpha$ is in the support of the reduced form of $x_1\circ z_1\circ \cdots \circ z_k$ and hence $x\circ y$, again  by Lemma \ref{product reduction}.
\end{proof}

We will make extensive use of Corollary~\ref{product of reduced} to find reduced forms of products of  reduced words.
The expression of elements in $\mathscr{GP}$ using reduced forms has a very useful cancellation-type property, as we now explain. First, another
technical result using a strategy that will be key in this paper. Recall from Definition~\ref{def of reduced} that
$K=\{w\in X^*: w\mbox{ is reduced}\}$.

\begin{lemma}\label{smallest or largest} Let  $\alpha\in V$ and define maps
$$\theta_\alpha: K\longrightarrow \mathscr{GP}\mbox{ and }\eta_\alpha:  K\longrightarrow \mathscr{GP}$$
where for each $x=x_1\circ \cdots \circ x_n\in K$,
\[x\theta_{\alpha}=\left\{ \begin{array} {ll} [x^{i(\alpha)}]& \alpha\in s(x)\\

[ \epsilon]&\mbox{else}\end{array}\right. \mbox{ and }  x\eta_{\alpha}=\left\{ \begin{array} {ll} [x_{i(\alpha)}]& \alpha\in s(x)\\

[\epsilon]&\mbox{else.}\end{array}\right.\]
Here $i(\alpha)$ is  the smallest $i$ such that $s(x_i)=\alpha$ and  $x^{i(\alpha)}$ is obtained by deleting $x_{i(\alpha)}$ from $x$. Then
$\theta_\alpha$ and $\eta_\alpha$ are constant on $R^{\sharp}$-classes, that is, they extend to  maps
$$\overline{\theta}_{\alpha}: \mathscr{GP}\longrightarrow \mathscr{GP}\mbox{ and }\overline{\eta}_{\alpha}: \mathscr{GP}\longrightarrow \mathscr{GP}$$
given by
\[ [w]\overline{\theta}_{\alpha}= w'\theta_\alpha\mbox{ and }  [w]\overline{\eta}_{\alpha}= w'\eta_\alpha\] where $w'$ is {\em any} reduced form of $w$.
\end{lemma}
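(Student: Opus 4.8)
The plan is to reduce the whole statement to invariance under a single shuffle step. By Proposition~\ref{shuffle}, every element of $\mathscr{GP}$ has a reduced form, two reduced words represent the same element if and only if they are shuffle equivalent, and the support $s(w)$ is the same for all reduced forms $w$ of a given element. Moreover, by Remark~\ref{rem:reduced}, a word obtained from a reduced word by a single shuffle step is again reduced, so a single shuffle step is a well-defined operation $K\to K$, and shuffle equivalence on $K$ is generated by such steps. Hence it suffices to show that if $w,w'\in K$ are related by a single shuffle step, then $w\theta_\alpha=w'\theta_\alpha$ and $w\eta_\alpha=w'\eta_\alpha$; the well-definedness of $\overline{\theta}_\alpha$ and $\overline{\eta}_\alpha$ then follows at once, taking $w'$ to be any reduced form of a representative of $[w]$. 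The case $\alpha\notin s(w)$ is immediate, since then $\alpha\notin s(w')$ and both maps return the shuffle-invariant values $[w]=[w']$ and $[\epsilon]$ respectively. So I assume $\alpha\in s(w)$ throughout.

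The key observation, which settles $\eta_\alpha$ outright, is that the subsequence of letters of $w$ having support $\alpha$ is a shuffle invariant. Indeed, a shuffle step transposes two adjacent letters $x_i,x_{i+1}$ with $(s(x_i),s(x_{i+1}))\in E$; since $\Gamma$ has no loops we have $(\alpha,\alpha)\notin E$, so two letters both of support $\alpha$ can never be transposed. Consequently the relative order of the $\alpha$-letters is preserved by every shuffle, and in particular the first such letter, as an element of $M_\alpha$, is unchanged. This gives $w\eta_\alpha=w'\eta_\alpha$ directly.

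For $\theta_\alpha$ I would argue that deleting the first $\alpha$-letter from $w$ and from $w'$ produces shuffle-equivalent words, whence the two outputs lie in the same $R^{\sharp}$-class. Write the shuffle as a transposition at positions $i,i+1$. If this transposition does not involve the first $\alpha$-letter, then (using that the swapped letters are then non-$\alpha$, as the first $\alpha$ lies strictly outside $\{i,i+1\}$) the first $\alpha$-letter sits at the same position in both words, and the two deletions differ by exactly the same transposition at $\{i,i+1\}$, hence are shuffle equivalent. If the transposition does involve the first $\alpha$-letter, say $s(x_i)=\alpha$ with $x_i$ the first $\alpha$-letter (the case $s(x_{i+1})=\alpha$ being symmetric), then $s(x_{i+1})\neq\alpha$, so after the swap $x_i$ is merely moved past $x_{i+1}$ and remains the first $\alpha$-letter; deleting it from $w$ and from $w'$ yields \emph{identical} words. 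In either case the two deleted words represent the same element, so $w\theta_\alpha=w'\theta_\alpha$.

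I expect the only point needing genuine care to be the bookkeeping in the second case of the $\theta_\alpha$ argument: one must track how the position of the first $\alpha$-letter shifts under the transposition and confirm that no earlier occurrence of $\alpha$ is created. This is controlled entirely by the no-loops hypothesis, which forbids two $\alpha$-letters from ever being adjacent-and-swappable and thereby makes ``first occurrence of $\alpha$'' a genuinely well-defined, shuffle-stable notion. Once this is secured, the remaining verifications are routine.
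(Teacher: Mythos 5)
Your proof is correct and follows essentially the same route as the paper's: reduce to a single shuffle step via Proposition~\ref{shuffle}, dispose of the case $\alpha\notin s(w)$, and then split on whether the first $\alpha$-letter is one of the two transposed letters, using the no-loops hypothesis to see that two $\alpha$-letters can never be swapped. (One harmless slip: your parenthetical claim that the swapped letters are non-$\alpha$ whenever the first $\alpha$-letter lies outside $\{i,i+1\}$ fails when that letter precedes position $i$, but the conclusion you draw there — same first $\alpha$-letter, deletions differing by the same transposition — holds regardless.)
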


\begin{proof}
Let $[p]=[q]$ where both $p,q\in K$ are reduced. We need show $p\theta_\alpha=q\theta_\alpha$ and $p\eta_\alpha=q\eta_\alpha$. By Proposition~\ref{shuffle}, $p$ and $q$ are shuffle equivalent; by finite induction  we can  assume that $q$ is obtained from $p$ by exactly one shuffle.

 Let
$$p=x_1\circ \cdots \circ x_{j-1}\circ x_{j}\circ x_{j+1}\circ x_{j+2}\circ \cdots \circ x_n$$ and $$q=x_1\circ \cdots \circ x_{j-1}\circ x_{j+1}\circ x_{j}\circ x_{j+2}\circ \cdots \circ x_n.$$

If $\alpha\not \in s(p)$ (and so $\alpha\not \in s(q)$), then
$$p\theta_\alpha=[p]=[q]=q\theta_\alpha\mbox{ and } p\eta_\alpha=[\epsilon]=q\eta_\alpha.$$

Suppose now that $\alpha\in s(p)$. Considering $p$, pick the smallest $k$ such that $s(x_k)=\alpha$. If $1\leq k\leq j-1$ or $j+2\leq k\leq n$, then, clearly, $p\theta_\alpha=q\theta_\alpha$ and $p\eta_\alpha=q\eta_\alpha$.  If $k=j$, then since $(s(x_j), s(x_{j+1}))\in E$ we have
$s(x_j)\neq s(x_{j+1})$; it follows that $p\eta_{\alpha}=q\eta_{\alpha}=[x_j]$ and  $p\theta_{\alpha}=q\theta_{\alpha}=
[x_1\circ \cdots \circ x_{j-1}\circ x_{j+1}\circ x_{j+2}\circ \cdots \circ x_n]$. Similarly  if $k=j+1$.

\end{proof}

It is useful to state the dual of Lemma \ref{smallest or largest}.

\begin{lemma}\label{smallest or largest-2} Let  $\alpha\in V$ and define maps
$$\delta_\alpha: K\longrightarrow \mathscr{GP}\mbox{ and }\tau_\alpha:  K\longrightarrow \mathscr{GP}$$
where for each $x=x_1\circ \cdots \circ x_n\in K$,
\[x\delta_{\alpha}=\left\{ \begin{array} {ll} [x^{j(\alpha)}]& \alpha\in s(x)\\

[x]&\mbox{else}\end{array}\right. \mbox{ and }  x\tau_{\alpha}=\left\{ \begin{array} {ll} [x_{j(\alpha)}]& \alpha\in s(x)\\

[\epsilon]&\mbox{else.}\end{array}\right.\]
Here $j(\alpha)$ is the largest $j$ such that $s(x_j)=\alpha$ and  $x^{j(\alpha)}$ is obtained by deleting $x_{j(\alpha)}$ from $x$. Then
$\delta_\alpha$ and $\tau_\alpha$ are constant on $R^{\sharp}$-classes, that is, they extend to  maps
$$\overline{\delta}_{\alpha}: \mathscr{GP}\longrightarrow \mathscr{GP}\mbox{ and }\overline{\tau}_{\alpha}: \mathscr{GP}\longrightarrow \mathscr{GP}$$
given by
\[ [w]\overline{\delta}_{\alpha}= w'\delta_\alpha\mbox{ and }  [w]\overline{\tau}_{\alpha}= w'\tau_\alpha\] where $w'$ is {\em any} reduced form of $w$.
\end{lemma}

We use the maps defined in Lemmas \ref{smallest or largest} and \ref {smallest or largest-2} to prove our first cancellation-type result.

\begin{lemma}\label{block}
Let $[x]=[y]$ where  $x=x_1\circ \cdots \circ x_n$ and $y=y_1 \circ \cdots \circ y_n$ are reduced and let $1\leq m\leq n$. Then $[x_1\circ \cdots \circ x_m]=[y_1\circ \cdots \circ y_m]$ if and only if $[x_{m+1}\circ \cdots \circ x_n]=[y_{m+1}\circ \cdots \circ y_n]$.
\end{lemma}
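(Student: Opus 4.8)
The plan is to reduce the statement to a purely combinatorial fact about shuffle equivalence and then to analyse it through an explicit matching of the letters of $x$ and $y$. First I would record that since $x$ and $y$ are reduced, so are all four factors $x_L:=x_1\circ\cdots\circ x_m$, $x_R:=x_{m+1}\circ\cdots\circ x_n$, $y_L:=y_1\circ\cdots\circ y_m$ and $y_R:=y_{m+1}\circ\cdots\circ y_n$. By Proposition~\ref{shuffle}, $[x_L]=[y_L]$ holds exactly when $x_L$ and $y_L$ are shuffle equivalent, and similarly on the right; thus the whole statement is about shuffle equivalence of prefixes and of suffixes. Also by Proposition~\ref{shuffle}, $x$ and $y$ are themselves shuffle equivalent.

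The central device is a bijection $\pi$ of $\{1,\ldots,n\}$ matching the letters of $x$ with those of $y$. For each $\alpha\in V$ the subword of letters of support $\alpha$ (its $\alpha$-projection) is unchanged by any shuffle, since a shuffle transposes only two adjacent letters whose supports form an edge, hence are of distinct vertices. Consequently $x$ and $y$ have identical $\alpha$-projections for every $\alpha$, and I define $\pi$ by sending the $t$-th letter of support $\alpha$ in $x$ to the $t$-th such letter in $y$; this gives $x_i=y_{\pi(i)}$ for all $i$. Applying the same invariance to the projection onto $\{s(x_i),s(x_j)\}$ whenever $(s(x_i),s(x_j))\notin E$ (note this covers $s(x_i)=s(x_j)$, as $\Gamma$ is loopless) shows that $\pi$ preserves the relative order of every such dependent pair: if $i<j$ and $(s(x_i),s(x_j))\notin E$ then $\pi(i)<\pi(j)$.

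With $\pi$ in hand I would show that each of the two equalities is equivalent to the single condition $(\star)$: $\pi(\{1,\ldots,m\})=\{1,\ldots,m\}$; since $\pi$ is a bijection of $\{1,\ldots,n\}$, condition $(\star)$ is the same as $\pi(\{m+1,\ldots,n\})=\{m+1,\ldots,n\}$, which at once yields the desired equivalence. For $[x_L]=[y_L]\Rightarrow(\star)$, shuffle equivalence of $x_L$ and $y_L$ forces their $\alpha$-projections to agree, hence for each $\alpha$ the number of support-$\alpha$ letters among the first $m$ positions coincides in $x$ and $y$; tracking these counts through $\pi$ (which respects the $\alpha$-order) gives $\pi(\{1,\ldots,m\})=\{1,\ldots,m\}$. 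For $(\star)\Rightarrow[x_L]=[y_L]$, the restriction $\pi|_{\{1,\ldots,m\}}$ is a letter-matching, dependent-order-preserving bijection between $x_L$ and $y_L$, and such a bijection forces shuffle equivalence: repeatedly locate the letter of $x_L$ matched to the current first letter of $y_L$, note that everything to its left commutes with it (otherwise the dependent pair would violate order preservation), shuffle it to the front, and induct on the length. The identical two arguments applied to $x_R,y_R$ give $[x_R]=[y_R]\iff(\star)$, completing the proof.

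The step I expect to be the main obstacle is the construction and justification of $\pi$ — in particular, verifying that it preserves the order of dependent pairs and, conversely, that any letter-matching bijection preserving dependent order is realised by a sequence of shuffles. These are the genuinely trace-theoretic facts underlying \cite[Theorem 6.1]{dacosta:2001}; by contrast the counting argument and the shuffle-to-front induction are routine bookkeeping.
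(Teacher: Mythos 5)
Your proof is correct, but it takes a genuinely different route from the paper's. The paper substitutes the prefix equality into $[x]=[y]$ to obtain $[x_1\circ\cdots\circ x_m\circ x_{m+1}\circ\cdots\circ x_n]=[x_1\circ\cdots\circ x_m\circ y_{m+1}\circ\cdots\circ y_n]$, notes that the right-hand word is reduced because it has minimal length in its class, and then strips the common prefix letter by letter using the maps $\overline{\theta}_{\alpha_1},\dots,\overline{\theta}_{\alpha_m}$ of Lemma~\ref{smallest or largest}, which delete the leftmost letter of a prescribed support and are already known to be constant on $R^{\sharp}$-classes; the converse is dispatched by the right-hand duals. You instead build a global position-matching bijection $\pi$ from the invariance of the $\alpha$-projections (and of the $\{\alpha,\beta\}$-projections for non-edges, which correctly subsumes equal supports) under shuffles, and show that the prefix equality and the suffix equality are each equivalent to $\pi$ stabilising $\{1,\dots,m\}$. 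What your approach buys is symmetry and self-containment: a single condition $(\star)$ yields both implications at once with no appeal to duality, and you need only Proposition~\ref{shuffle} as input. The cost is that you re-derive the projection combinatorics of trace theory -- both the dependent-order preservation of $\pi$ and the shuffle-to-front realisation of a dependent-order-preserving matching -- which is essentially the content the paper packages once in Lemma~\ref{smallest or largest} (and outsources to \cite[Theorem 6.1]{dacosta:2001}) and then reuses here and again in Theorem~\ref{thm:uniqueness}. Both of the steps you flag as the main obstacles go through as you sketch them, so there is no gap, only duplicated machinery.
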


\begin{proof}
Suppose that  $[x_1\circ \cdots \circ x_m]=[y_1\circ \cdots \circ y_m]$.  Since  $[x_1\circ \cdots \circ x_n]=[y_1 \circ \cdots \circ y_n]$, we have
$$[x_1\circ \cdots x_{m}\circ x_{m+1}\circ \cdots \circ x_n]=[x_1\circ \cdots x_{m}\circ y_{m+1}\circ \cdots \circ y_n].$$
As $x_1\circ \cdots x_{m}\circ x_{m+1}\circ \cdots \circ x_n$ is  reduced  and $x_1\circ \cdots x_{m}\circ y_{m+1}\circ \cdots \circ y_n$ has the same length, we deduce that $x_1\circ \cdots x_{m}\circ y_{m+1}\circ \cdots \circ y_n$ is  reduced, by Proposition~\ref{shuffle}. Let $s(x_r)=\alpha_r$ for all $1\leq r\leq m$. Then, observing that any right factor of a reduced word is reduced,
 $$[x_1\circ \cdots \circ x_{m}\circ x_{m+1}\circ \cdots \circ x_n]\overline{\theta}_{\alpha_{1}}\cdots \overline{\theta}_{\alpha_{m}}=[x_1\circ \cdots \circ x_{m}\circ y_{m+1}\circ \cdots \circ y_n]\overline{\theta}_{\alpha_{1}}\cdots \overline{\theta}_{\alpha_{m}}$$ by  Lemma \ref{smallest or largest}, which gives  $[x_{m+1}\circ \cdots \circ x_n]=[y_{m+1}\circ \cdots \circ y_n]$ as desired.

The remainder of the lemma follows dually from Lemma \ref{smallest or largest-2}, by applying the maps $\overline{\delta_\alpha}$.
\end{proof}

\begin{defn} \label{defn:complete} A word $w\in X^*$ is a  {\it complete block} if it is reduced, and $s(w)$ forms a complete subgraph of $\Gamma=\Gamma (V, E)$.
\end{defn}

We now show that any reduced word in $X^*$  may be shuffled  into a word that is a  product of complete blocks.

\begin{defn}\label{defn:lcrf} Let $w\in X^*$. Then $w$ is a {\it left Foata normal form}
with {\em block length $k$} and {\em blocks} $w_i\in X^*$, $1\leq i\leq k$, if:

(i) $w=w_1\circ \cdots \circ w_k\in X^*$ is a reduced word;

(ii) $s(w_i)$ is a complete subgraph for all $1\leq i\leq k$;

(iii) for any $1\leq i<k$ and  $\alpha\in s(w_{i+1})$, there is some $\beta\in s(w_i)$ such that $(\alpha, \beta)\not\in E$.
\end{defn}

If $[x]=[w]$ where $w$ is a left  Foata normal form, then we may say
$w$ is a  {\em  left  Foata normal form of $x$}.

 \begin{Rem}\label{rem:lfnf} (i)
The empty word  $\epsilon$ is a left Foata normal form with block length 0.  (ii) A complete block is precisely a word in left  Foata normal form with block length $1$. (iii) If $w=w_1\circ \cdots \circ w_k\in X^*$ is in left Foata normal form with blocks $w_i$, $1\leq i\leq k$, then
 for any $1\leq j\leq j'\leq k$ we have
 $w_j\circ w_{j+1}\circ\cdots\circ w_{j'}$ is also in left Foata normal form, with blocks $w_h$, $j\leq h\leq j'$.
\end{Rem}

\begin{Prop}\label{lem:leftreducedform}
Every element in $\mathscr{GP}$ may be represented by a left Foata normal form.
\end{Prop}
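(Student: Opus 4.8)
The plan is to pass to reduced words and induct on length, peeling off the first block greedily from the left. By Proposition~\ref{shuffle} every element of $\mathscr{GP}$ is represented by a reduced word, so it suffices to show that each reduced $w=x_1\circ\cdots\circ x_n$ equals in $\mathscr{GP}$ a left Foata normal form. I argue by induction on $n$; the case $n=0$ is the empty word, a left Foata normal form of block length $0$ (Remark~\ref{rem:lfnf}(i)).

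For the inductive step I take the first block to be the \emph{initial clique} of $w$: the subword $w_1$ formed by those letters $x_i$ that commute with all their predecessors, i.e.\ $(s(x_j),s(x_i))\in E$ for every $j<i$. If $i<i'$ are two such positions then $i'$ being initial forces $(s(x_i),s(x_{i'}))\in E$, so their supports are distinct; hence the supports in $w_1$ are pairwise distinct and pairwise adjacent, $s(w_1)$ is a complete subgraph, and $w_1$ is a complete block (Definition~\ref{defn:complete}). Each letter of $w_1$ commutes with everything before it, so a sequence of shuffles moves them to the front, giving a word $w_1\circ u$ shuffle equivalent to $w$, where $u$ lists the remaining letters in order. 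Being shuffle equivalent to the reduced word $w$, the word $w_1\circ u$ is reduced (Remark~\ref{rem:reduced}), and so is its right factor $u$. Since $x_1$ is always initial we have $w_1\neq\epsilon$, so $|u|<n$ and the induction hypothesis gives a left Foata normal form $u_1\circ\cdots\circ u_m$ of $[u]$.

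The candidate left Foata normal form of $[w]$ is $w_1\circ u_1\circ\cdots\circ u_m$. It represents $[w]$ and has the same length as the reduced word $w_1\circ u$, so it is reduced by the minimal-length criterion of Proposition~\ref{shuffle}; its blocks are complete; and condition~(iii) of Definition~\ref{defn:lcrf} holds between the $u_i$ by the induction hypothesis. What remains, and what carries the real content, is condition~(iii) at the junction $w_1\mid u_1$: each $\alpha\in s(u_1)$ must have a non-neighbour in $s(w_1)$.

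I expect this junction condition to be the main obstacle, and I would handle it by first isolating the auxiliary fact that the initial clique is an invariant of the element: a single shuffle of two adjacent commuting letters does not alter which letters commute with all of their predecessors, so shuffle equivalent reduced words have the same initial clique. Granting this, fix $\alpha\in s(u_1)$ and let $y$ be the corresponding letter, which by invariance lies in the initial clique of $u$. Reducedness of $w_1\circ u$ first excludes $\alpha\in s(w_1)$: otherwise the $\alpha$-letter of $w_1$ and $y$ would delimit a factor all of whose letters (the rest of $w_1$, and the letters of $u$ before $y$, which commute with $y$) are adjacent to $\alpha$, contradicting Definition~\ref{def of reduced}. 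Next, if $\alpha$ had no non-neighbour in $s(w_1)$, then $y$ would commute both with all of $w_1$ and with the letters preceding it in $u_1\circ\cdots\circ u_m$, so $y$ would belong to the initial clique of $w_1\circ u_1\circ\cdots\circ u_m$; but that clique coincides with the initial clique of $w$ (the two are shuffle equivalent reduced representatives of $[w]$ by Proposition~\ref{shuffle}), namely $w_1$, while $y\notin w_1$ — a contradiction. Hence $\alpha$ has a non-neighbour in $s(w_1)$, condition~(iii) holds at the junction, and $w_1\circ u_1\circ\cdots\circ u_m$ is a left Foata normal form of $[w]$, completing the induction.
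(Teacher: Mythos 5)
Your proof is correct, and the decomposition is the same as the paper's: both peel off from the left the largest subword that can be shuffled to the front and has complete support (your ``initial clique'' consists of exactly the letters that the paper's maximality condition selects). Where you genuinely diverge is in the verification of Definition~\ref{defn:lcrf}(iii). The paper argues by maximality: if every vertex of $s(w_i)$ were adjacent to some $\alpha\in s(w_{i+1})$, the offending letter could be absorbed into $w_i$, contradicting the choice of $|w_i|$ as maximum. You instead prove that the initial clique is a shuffle invariant and derive a contradiction from $y$ acquiring initiality in the rearranged word; this buys you a canonical (rather than merely maximal) first block, a genuinely inductive structure that localises the whole argument to a single junction, and an auxiliary invariance fact that is close in spirit to the uniqueness statement of Theorem~\ref{thm:uniqueness}. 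One small point to tighten: the phrase ``that clique coincides with the initial clique of $w$, namely $w_1$, while $y\notin w_1$'' conflates letters with positions (an element of $X$ could occur at several positions of the word). The clean statement is that the \emph{number} of initial positions is preserved by each shuffle, so under your hypothesis $w_1\circ u_1\circ\cdots\circ u_m$ would have at least $|w_1|+1$ initial positions while its shuffle-equivalent $w$ has exactly $|w_1|$; with that phrasing the contradiction is immediate and the argument is complete.
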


\begin{proof}  We know that any element of $\mathscr{GP}$ may be represented by a reduced word. Take a reduced word $w=y_0$ and let
 $w_1$ be chosen  such that
$w_1\circ y_1$ is shuffle equivalent to $w$ for some $y_1$, $s(w_1)$ is complete, and
$|w_1|$ is maximum with respect to these constraints. Assume that
$w_1,y_1,w_2,y_2,\hdots, w_k,y_{k}$ have been chosen such that
for each $1\leq j\leq k$ we have that $y_{j-1}$ is shuffle equivalent to $w_j\circ y_{j}$, $s(w_{j})$ is complete, and $|w_{j}|$ is maximum with respect to these constraints. Clearly this process must end after a finite number of steps with $y_k=\epsilon$.

 For any $1\leq j\leq k$ we have by finite induction
that $y_{j-1}$ is shuffle equivalent to $w_{j}\circ w_{j+1}\circ\cdots\circ w_k$ and, in particular,  $w$ is shuffle equivalent to $w_1\circ \cdots \circ w_k$. We now claim that $w_1\circ \cdots \circ w_k$ is a left Foata normal form with blocks $w_i$ for $1\leq i\leq k$.
Certainly (i) and (ii) of Definition~\ref{defn:lcrf} hold. To see that (iii) holds, suppose that
$1\leq i<k$ and let $\alpha\in s(w_{i+1})$; say $w_{i+1}=p\circ a\circ q$ where $a\in X$ and
$s(a)=\alpha$. Suppose for contradiction that  for all $\beta\in s(w_i)$ we  have
$(\alpha,\beta)\in E$.  Since $y_{i-1}$ is shuffle equivalent to
$w_{i}\circ w_{i+1}\circ\cdots\circ w_k$  we would have
$y_{i-1}$ being shuffle equivalent to
$w_{i}\circ a\circ y_{i+1}'$ for some $y_{i+1}'$, where
$s(w_{i}\circ a)$ is complete and $|w_{i}\circ a|>|w_{i}|$, a contradiction.
\end{proof}

\begin{Rem}\label{ttt}
Let $x=x_1\circ \cdots \circ x_n$ and $z=z_1\circ \cdots \circ z_n$ be reduced forms of $w$. Pick $\alpha\in s(x)(=s(z))$. Let $i$ be least such that $s(x_i)=\alpha$ and $j$ be least such that $s(z_j)=\alpha$.  Since $x$ and $z$ are shuffle equivalent,  $x_i=z_j$. Suppose that there exists some $1\leq i'< i$ such that $s(x_{i'})=\beta$ with $(\beta, \alpha)\not \in E$; note that by minimality of $i$ we have $\beta\neq \alpha$. Then, again as $x$ and $z$ are shuffle equivalent, there exists some $1\leq j'<j$ such that  $s(z_{j'})=\beta$ and $z_{j'}=x_{i'}$.
\end{Rem}

We are now in a position to prove the main result of this section, which tells us that the left  Foata normal form of an element of any $\mathscr{GP}$ is essentially unique.

\begin{Thm}\label{thm:uniqueness} Let $w\in X^*$ and
let $w_1\circ w_2\circ\cdots\circ w_k$ and $w_1'\circ w_2'\circ\cdots \circ w_h'$ be left Foata normal forms of $w$ with blocks $w_i,w_j'$ for $1\leq i\leq k, 1\leq j\leq h$.  Then $k=h$ and $[w_i]=[w_i']$
for $1\leq i\leq k$.
\end{Thm}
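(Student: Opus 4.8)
The plan is to induct on the block length $k$, the base case $k=0$ being immediate since then $w=\epsilon$ and both normal forms are empty. For the inductive step the whole argument hinges on two things: identifying the support $s(w_1)$ of the leading block as an invariant of the element $[w]$, and then recovering the block $[w_1]$ itself. Throughout I set $u=w_1\circ\cdots\circ w_k$ and $u'=w_1'\circ\cdots\circ w_h'$; these are both reduced forms of $w$, hence shuffle equivalent and of equal length by Proposition~\ref{shuffle}.

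First I would call a vertex $\alpha\in s(w)$ \emph{initial} if $[w]$ has a reduced form beginning with a letter of $M_\alpha$; this is manifestly a property of $[w]$ alone, and I would show that $s(w_1)$ is exactly the set $\mathrm{Init}$ of initial vertices. For $s(w_1)\subseteq\mathrm{Init}$: if $\alpha\in s(w_1)$ then, since $s(w_1)$ is complete, the letters of $w_1$ pairwise commute, so the $\alpha$-letter of $w_1$ may be shuffled to the front of $u$, exhibiting a reduced form beginning with an $\alpha$-letter. For the reverse inclusion I would argue contrapositively: if $\alpha\notin s(w_1)$, let $j\geq 2$ be least with $\alpha\in s(w_j)$; by clause (iii) of Definition~\ref{defn:lcrf} there is $\beta\in s(w_{j-1})$ with $(\alpha,\beta)\notin E$. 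As $j$ is least, the first $\alpha$-letter of $u$ lies in $w_j$, and the $\beta$-letter of $w_{j-1}$ precedes it and fails to commute with it, so no shuffle can bring an $\alpha$-letter to the front and $\alpha$ is not initial. Applying the same description to $u'$ gives $s(w_1')=\mathrm{Init}=s(w_1)$.

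The hard part will be making rigorous the claim that ``no shuffle can bring an $\alpha$-letter to the front.'' This amounts to showing that the property ``the first occurrence of $\alpha$ commutes with all of its predecessors'' is preserved under a single adjacent commuting swap, and hence, by Proposition~\ref{shuffle}, is shared by all reduced forms of $[w]$. I would establish this invariance by a short case analysis on whether the swapped pair touches the first $\alpha$-letter, using that swapping two letters with adjacent (hence distinct, as $\Gamma$ is loopless) supports alters the predecessor set of each only by a vertex already adjacent to it, so the commuting condition is unaffected and the identity of the first occurrence of each vertex is unchanged.

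With $s(w_1)=s(w_1')=:A$ in hand I would recover the blocks. Each complete block contains every vertex of its support exactly once, so for $\alpha\in A$ the unique $\alpha$-letter $a_\alpha$ of $w_1$ is the first $\alpha$-letter of $w$, and likewise $a_\alpha'$ for $w_1'$; both are computed by the well-defined map $\overline{\eta}_\alpha$ of Lemma~\ref{smallest or largest}, whence $[a_\alpha]=[a_\alpha']$ and so $a_\alpha=a_\alpha'$ in $M_\alpha$ by the embedding of Remark~\ref{rem:retract}. Since $w_1$ and $w_1'$ are complete blocks of the same support in which all letters commute, each equals the order-independent product of its letters, giving $[w_1]=[w_1']$. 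Finally, $w_1$ and $w_1'$ are reduced with the same support, hence have the same length $m=|A|$, so Lemma~\ref{block} applied to $u,u'$ split after position $m$ yields $[w_2\circ\cdots\circ w_k]=[w_2'\circ\cdots\circ w_h']$. By Remark~\ref{rem:lfnf}(iii) these are left Foata normal forms of a common element, so the induction hypothesis gives $k-1=h-1$ and $[w_i]=[w_i']$ for $2\leq i\leq k$, which together with $[w_1]=[w_1']$ completes the proof.
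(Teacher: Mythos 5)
Your proposal is correct and follows essentially the same route as the paper: both rest on the shuffle-invariance of the first occurrence of each vertex together with its non-commuting predecessors (your ``initial vertices'' invariant is exactly the paper's opening observation), both use clause (iii) of Definition~\ref{defn:lcrf} against the completeness of $s(w_1)$ to get $s(w_1)=s(w_1')$, and both extract $[w_1]$ via the $\overline{\eta}_\alpha$-type maps of Lemma~\ref{smallest or largest} before inducting. The only differences are cosmetic: you split off the tail with Lemma~\ref{block} and induct on block length, where the paper applies the deletion maps directly and inducts on word length.
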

\begin{proof}
Let $p_1=w_2\circ\cdots\circ w_k$ and
$p_1'=w_2'\circ\cdots \circ w_h'$; by Remark~\ref{rem:lfnf}
$p_1$ and $p_1'$ are also in left Foata normal form.  We claim that $s(w_1)=s(w_1')$. Expressing as products of letters, let
$$w_1=a_1\circ \cdots a_r, \, p_1=b_1\circ \cdots \circ b_m, \, w_1'=u_1\circ\cdots \circ u_t \mbox{~and~}p_1'=v_1\circ \cdots \circ v_n.$$

Suppose that there exists some $\delta\in s(w_1)$ but not in $s(w_1')$, so that  $\delta\in s(p_1')$. Let $i$ be least such that $s(a_i)=\delta$ and let  $j$ be least such that $s(v_j)=\delta$. By definition of left Foata normal form, either (i) $v_j$ is in the first block $w_2'$ of $p_1'$, in which case there exists some $1\leq t'\leq t$ with $(s(u_{t'}), \delta)\not \in E$,
or (ii) $v_j$ is in a subsequent  block of $p_1'$ in which case certainly  there exists $1\leq j'<j$ with
 $(s(v_{j'}), \delta)\not \in E$.  Let $\gamma=s(u_{t'})$ (in Case (i)) and
 $\gamma=s(v_{j'})$ (in Case (ii)). In either case we have  $\gamma\neq \delta$ and $(\delta,\gamma)\notin E$. By Remark \ref{ttt} there must be
 some $i'$ with $1\leq i'<i$ such that  $s(a_{i'})=\gamma$. This is impossible since
 $s(w_1)$ is a complete subgraph. Together with the converse argument we  deduce that  $s(w_1)=s(w_1')$.

We now show that $[w_1]=[w_1']$ and $[p_1]=[p_1']$. Let $s(w_1)=\{\alpha_1, \cdots, \alpha_r\}$. It then follows from Lemma~\ref{smallest or largest} that
$$[p_1]=[w_1\circ p_1]\overline{\theta}_{\alpha_1}\cdots \overline{\theta}_{\alpha_r}=[w_1'\circ p_1']\overline{\theta}_{\alpha_1}\cdots \overline{\theta}_{\alpha_r}=[p_1']$$ and
$$[w_1]=[w_1\circ p_1]\overline{\eta}_{\alpha_1}\cdots [w_1\circ p_1]\overline{\eta}_{\alpha_r}=[w_1'\circ p_1']\overline{\eta}_{\alpha_1}\cdots [w_1'\circ p_1']\overline{\eta}_{\alpha_r}=[w_1']$$ as required.

Noticing that $|p_1|<|w_1\circ p_1|$, the result now follows by induction.
\end{proof}

Clearly,  we may define the notion of a {\it right Foata normal form} of an element in $X^*$, and the dual arguments to those for left Foata normal form hold.

\section{ Towards a characterization of $\mathcal{R}^*$}\label{sec:gp}

We continue to consider a fixed, but arbitrary, graph product of monoids $\mathscr{GP}$. We now show how we can use the left Foata normal forms developed in Section~\ref{sec:lfnf}
to describe  the relation $\mathcal{R}^*$ in $\mathscr{GP}$.  We will build on this in Section~\ref{sec:abundancy}   to show that if each vertex monoid is abundant, then so is $\mathscr{GP}$.

 The next lemma can be deduced from \cite[Proposition 7.1]{dacosta:2001}, together with our Proposition~\ref{lem:thedeletiontrick} and Remark~\ref{rem:retract}. Note that if $x=x_1\circ\cdots \circ x_n$
  is  reduced, then in Costa's terminology, the $x_i$ are {\em components}.

\begin{lemma}\label{invertible}
Let $x=x_1\circ\cdots \circ x_n\in X^*$ be  reduced. Then
the following are equivalent:

\begin{enumerate}\item  $[x]$ is left invertible in $\mathscr{GP}$;
\item  $[x_i]$ is left invertible in $\mathscr{GP}$ for  $1\leq i\leq n$;
\item $x_i\in M_{s(x_i)}$ is left invertible in $M_{s(x_i)}$ for  $1\leq i\leq n$.
\end{enumerate}

Moreover, if any of the above conditions hold, then
any left inverse of $[x]$  has the form $[y]$ where
$y= y_n\circ\cdots \circ y_1$ and $y_i$ is a left inverse of $x_i$ for $1\leq i\leq n$.
\end{lemma}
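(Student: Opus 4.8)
The plan is to prove the easy equivalence $(2)\Leftrightarrow(3)$ and the implication $(3)\Rightarrow(1)$ directly, and to handle the hard implication $(1)\Rightarrow(3)$ together with the uniqueness claim of the ``moreover'' clause by an induction on $n$. For $(2)\Leftrightarrow(3)$ I would use the retraction of Proposition~\ref{lem:thedeletiontrick} with $V'=\{\alpha\}$, $\alpha=s(x_i)$: the embedding $\iota_\alpha$ of Remark~\ref{rem:retract} turns a left inverse of $x_i$ inside $M_\alpha$ into a left inverse of $[x_i]$, while applying the morphism $\overline{\eta}_{V,\{\alpha\}}\colon\mathscr{GP}\to M_\alpha$ to an identity $[w][x_i]=[\epsilon]$ returns a left inverse of $x_i$ inside $M_\alpha$ (as $\overline{\eta}$ is a monoid morphism fixing $M_\alpha$). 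For $(3)\Rightarrow(1)$ I would exhibit the inverse explicitly: if $y_i x_i = 1_{s(x_i)}$ in each vertex monoid, put $y = y_n\circ\cdots\circ y_1$; then $[y\circ x]$ collapses from the inside out, since $[y_1\circ x_1]=[y_1 x_1]=[\epsilon]$ lets us delete the adjacent pair, after which $[y_2\circ x_2]=[\epsilon]$, and so on down to $[\epsilon]$. This simultaneously gives the existence half of the ``moreover'' statement.

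The substance is $(1)\Rightarrow(3)$ with uniqueness of the form of a left inverse. Fix a left inverse of $[x]$ and, by Proposition~\ref{shuffle}, represent it by a reduced word $y=y_1\circ\cdots\circ y_m$, so $[y\circ x]=[\epsilon]$. Since the unique reduced form of $y\circ x$ is $\epsilon$, Corollary~\ref{product of reduced} forces every $\alpha\in s(x)$ to lie in $s(y)$, and its left–right dual forces the reverse inclusion, whence $s(x)=s(y)$. I would then induct on $n=|x|$, the case $n\le 1$ following from the retraction argument (when $s(y)=\{\alpha\}$ forces $y$ to be a single $\alpha$-letter). For the inductive step set $\alpha=s(x_1)$; as $\alpha\in s(y)$, let $y_{j_0}$ be the \emph{right-most} letter of $y$ with $s(y_{j_0})=\alpha$. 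The crux is a sublemma asserting that $y_{j_0}$ may be shuffled to the right-hand end of $y$, that is $(\alpha,s(y_k))\in E$ for all $k>j_0$, and that the resulting adjacent pair satisfies $y_{j_0}x_1=1_\alpha$. Granting this, write $y$ up to shuffle as $y''\circ y_{j_0}$ with $y''$ reduced; deleting the cancelling pair gives $[y''\circ x_2\circ\cdots\circ x_n]=[\epsilon]$ with $y''$ and $x_2\circ\cdots\circ x_n$ reduced of strictly smaller total length. By induction each $x_i$ with $i\ge 2$ is left invertible with the prescribed partner, $x_1$ is handled by the sublemma, and Lemma~\ref{block} keeps the bookkeeping consistent, delivering both $(3)$ and the form $y=y_n\circ\cdots\circ y_1$.

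The hard part, and the step I expect to be the main obstacle, is the sublemma: that the right-most $\alpha$-letter of $y$ both commutes past everything to its right and genuinely cancels $x_1$. The difficulty is that reductions in $y\circ x$ can interact, so that in principle a merge occurring elsewhere, rather than a direct cancellation against $y_{j_0}$, might be what eliminates $x_1$. To control this I would build $y\circ x$ by feeding the letters of $y$ to $x$ one at a time from the right and applying Lemma~\ref{product reduction} at each prepend: because $x$ is reduced no letter of $x$ can cancel another letter of $x$, so the only way $x_1$ can disappear is to merge with an $\alpha$-letter of $y$ reaching it, and the case analysis of Lemma~\ref{product reduction} pins this down to $y_{j_0}$ with $y_{j_0}x_1$ an identity. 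This combinatorial fact is exactly the content of \cite[Proposition~7.1]{dacosta:2001}, which one may cite directly, using Proposition~\ref{lem:thedeletiontrick} and Remark~\ref{rem:retract} to pass from the finite full subgraph on $s(x)\cup s(y)$ to all of $\Gamma$.
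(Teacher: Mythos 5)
Your proposal is correct and, at the decisive point, takes the same route as the paper: the paper offers no independent argument for this lemma but derives it from \cite[Proposition~7.1]{dacosta:2001} together with Proposition~\ref{lem:thedeletiontrick} and Remark~\ref{rem:retract}, which is exactly how you justify your crux sublemma about the right-most $\alpha$-letter of $y$. The extra scaffolding you supply --- the retraction argument for $(2)\Leftrightarrow(3)$, the telescoping inverse for $(3)\Rightarrow(1)$, and the induction deleting a cancelling pair for the uniqueness clause --- is sound and simply makes explicit what the paper leaves to the reader.
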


The arguments in the next lemma essentially rely on the following simple observations. If
$x=x_1\circ\cdots\circ x_n\in X^*$ is shuffle equivalent to
$y=x_{j_1}\circ\cdots\circ x_{j_n}$, then for any
$1\leq i<k\leq n$ with $j_i>j_k$ we have $(s(x_{j_i}),s(x_{j_k}))\in E$.  Suppose we can shuffle $x$ to a
word $x'\circ x''$,  where $x'$ has length $m$. Consequent to the previous remark,  we can then
shuffle $x'$ to a word $x_{i_1}\circ x_{i_2}\circ\cdots\circ x_{i_m}$ where
$i_1<i_2<\cdots<i_m$ and $ x''$ to the word obtained from $x$ by deleting the letters
$x_{i_1},\cdots ,x_{i_m}$. Moreover, for any $1\leq \ell\leq m$ we can shuffle the letters $x_1,\cdots,  x_{i_{\ell}-1},x_{i_{\ell}}$ in $x'\circ x''$ back to the first $i_{\ell}$ positions, resulting in having shuffled
$x$ to $x_1\circ x_2\cdots\circ  x_{i_{\ell}-1}\circ x_{i_{\ell}}\circ x_{i_{\ell+1}}\circ\cdots \circ x_{i_m}\circ
z$ where $z$ is $x_{i_{\ell}+1}\circ x_{i_{\ell}+2}\circ\cdots \circ x_n$ with
$x_{i_{\ell+1}},\cdots, x_{i_m}$ deleted.

\begin{lemma}\label{decomposition}
Let $u\in X^*$.   Then:
\begin{enumerate}

\item $[u]=[a][x]$ where  $a\circ x$ is  reduced, $[a]$ is left invertible,  and $|a|$ is maximum with respect to these constraints;
\item with $[u]=[a][x] $  as in (1), if in addition $[a][x]=[b][y]$ where (in addition) $b\circ y$ is reduced, $[b]$ is left invertible,  and $|b|=|a|$, then $[a]=[b]$ and $[x]=[y]$;
\item  with $[u]=[a][x] $  as in (1), $x$ has a left Foata normal form  $x_1\circ \cdots \circ x_m$ with blocks $x_i$, $1\leq i\leq m$, such that
 $x_1$ contains no left invertible letters.
 \end{enumerate}
\end{lemma}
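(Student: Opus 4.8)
The three parts are proved in turn, and the substance lies entirely in part (2); parts (1) and (3) are short consequences of the maximality built into the definition.

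For part (1), I would start from a reduced form $v_0$ of $u$, which exists by Proposition~\ref{shuffle}. By that same proposition the reduced words representing $[u]$ are exactly those shuffle equivalent to $v_0$, and a shuffle equivalence class is finite, so there are only finitely many of them. For each such reduced word $v$ and each prefix $a$ of $v$ (so that $v=a\circ x$ as a literal concatenation and $a\circ x$ is automatically reduced) with $[a]$ left invertible, I record $|a|$; the choice $a=\epsilon$ is always available and $|a|\le|v|$ is bounded, so the maximum is attained. A maximizing pair yields $[u]=[a][x]$ with the required properties, reading off left invertibility letter by letter via Lemma~\ref{invertible}. For part (3) I assume this decomposition and take a left Foata normal form $x_1\circ\cdots\circ x_m$ of $x$ (Proposition~\ref{lem:leftreducedform}). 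If some letter $c$ of the first block $x_1$ were left invertible, then, since $s(x_1)$ is complete and $x_1$ is reduced, its letters have pairwise distinct, pairwise adjacent supports and hence pairwise commute, so $c$ can be shuffled to the front of $x_1$ and thus of $x$; writing $x$ as shuffle equivalent to $c\circ x'$ makes $a\circ c\circ x'$ reduced by Remark~\ref{rem:reduced}, with $[a\circ c]=[a][c]$ left invertible and $|a\circ c|=|a|+1$, contradicting maximality. So $x_1$ has no left invertible letter.

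The heart of the argument is part (2), which I would prove by induction on $m=|a|=|b|$, the case $m=0$ being immediate. Since $a\circ x$ and $b\circ y$ are reduced and $[a\circ x]=[b\circ y]$, they are shuffle equivalent by Proposition~\ref{shuffle}. Writing $\alpha=s(a_1)$ for the first letter $a_1$ of $a$, the position–matching observation from the proof of Theorem~\ref{thm:uniqueness} (applied in both directions) shows that the first letter of $b\circ y$ of support $\alpha$, say in position $k$, equals $a_1$, and that every letter of $b\circ y$ before position $k$ commutes with $a_1$ (otherwise there would be a non-commuting letter before position $1$ of $a\circ x$). The crucial claim is that this matching occurrence lies \emph{inside} $b$, i.e.\ $k\le m$. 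Were $k>m$, then since $a_1$ commutes with everything preceding position $k$, in particular with all of $b$, I could shuffle $a_1$ leftward to sit immediately after $b$, exhibiting $b\circ y$ as shuffle equivalent to $b\circ a_1\circ y^-$ (with $y^-$ the word $y$ minus that occurrence of $a_1$); then $b\circ a_1$ is reduced by Remark~\ref{rem:reduced}, $[b\circ a_1]=[b][a_1]$ is left invertible, and $|b\circ a_1|=m+1$, contradicting that $|b|=|a|$ is the maximum length of a left invertible prefix. Hence $k\le m$, and as $a_1$ commutes with all letters before it inside $b$, I may shuffle it to the front of $b$, writing $b$ as shuffle equivalent to $a_1\circ b^-$ with $b^-$ reduced, $[b^-]$ left invertible, and $|b^-|=m-1$.

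To close the induction, write $a=a_1\circ a^-$ and left-multiply the equality $[a_1\circ a^-\circ x]=[a_1\circ b^-\circ y]$ by a left inverse of $[a_1]$, obtaining $[a^-\circ x]=[b^-\circ y]$, where $a^-\circ x$ and $b^-\circ y$ are reduced (right factors of reduced words) and $[a^-],[b^-]$ are left invertible of length $m-1$. A short maximality check shows $a^-$ realises the maximum left invertible prefix length of $[a^-\circ x]$: any strictly longer left invertible prefix $c$ would, on prepending $a_1$, give a left invertible prefix $a_1\circ c$ of $[u]$ of length exceeding $m$, against part (1). The inductive hypothesis then yields $[a^-]=[b^-]$, whence $[a]=[a_1][a^-]=[a_1][b^-]=[b]$; and left invertibility of $[a]$ lets me left-cancel in $[a][x]=[b][y]=[a][y]$ to conclude $[x]=[y]$. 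The two maximality arguments in this part are where I expect the real work to be: ruling out $k>m$ (a front-shiftable left invertible letter cannot be stranded in $y$) and verifying that $a^-$ inherits maximality, both of which rest on the commutation of $a_1$ past the relevant letters supplied by the position–matching observation.
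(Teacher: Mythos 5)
Your proof is correct, and parts (1) and (3) follow the paper's argument essentially verbatim; the interesting divergence is in part (2). The paper fixes a single reduced form $p=p_1\circ\cdots\circ p_n$ of $u$ and, via the shuffling remark stated just before the lemma, realises both $a$ and $b$ as increasing subsequences $p_{i_1}\circ\cdots\circ p_{i_k}$ and $p_{j_1}\circ\cdots\circ p_{j_k}$ of $p$; it then shows $i_s=j_s$ for all $s$ by finite induction, each step being a maximality contradiction of exactly the kind you use to rule out $k>m$ (a left invertible letter that commutes past everything to its left cannot be stranded outside the maximal prefix). You instead induct on $|a|$, invoke the first-occurrence matching observation from the proof of Theorem~\ref{thm:uniqueness} to locate $a_1$ inside $b$, left-cancel $[a_1]$ by a left inverse, and recurse. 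Your route costs an extra verification --- that $a^-$ inherits maximality for the element $[a^-\circ x]$, which you correctly supply by prepending $a_1$ to any hypothetical longer prefix --- whereas the paper's simultaneous comparison of index sequences inside one fixed word sidesteps this; on the other hand, your version reuses machinery already established for uniqueness of Foata normal forms rather than the ad hoc shuffling remark, and makes the final cancellation $[x]=[y]$ explicit where the paper simply asserts it (there it follows from Lemma~\ref{block}). Both arguments ultimately rest on the same engine: Proposition~\ref{shuffle} together with the observation that shuffling a left invertible letter forward past a commuting segment would produce a longer left invertible prefix, contradicting the maximality of $|a|$.
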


\begin{proof}  We begin by finding  a reduced form $p=p_1\circ\cdots\circ p_n$ for $u$. By shuffling $p$ we
 may find $a$ and $x$ as in (1). By Lemma~\ref{invertible} and the above remark we may assume that
$a=p_{i_1}\circ\cdots \circ p_{i_k}$ where
$i_1<i_2<\cdots< i_k$, with $p_{i_h}$ is left invertible for all $1\leq h\leq k$.

Suppose now that $b,y$ are as given; again we may assume that
$b=p_{j_1}\circ\cdots \circ p_{j_k}$ where
$j_1<j_2<\cdots< j_k$.
If $i_1<j_1$ then we notice that
we can shuffle $p$ to $p_{i_1}\circ p_1\circ\cdots \circ p_{j_1-1}\circ p_{j_1}\circ p_{j_1+1}\circ\cdots \circ p_n$
and then to $p_{i_1}\circ p_{j_1}\circ p_{j_2}\circ\cdots \circ p_{j_k}\circ y'$ where $y'$ is $y$ with
$p_{i_1}$ deleted. But, this contradicts the maximality of $|a|$. With the dual argument we obtain that
$i_1=j_1$.

Suppose for finite induction that $i_\ell=j_\ell$ for $1\leq \ell\leq s<k$  and that
$i_{s+1}<j_{s+1}$. Then similarly to the preceding argument we have that
$p$ shuffles to
$p_{i_1}\circ p_{i_2}\circ\cdots \circ p_{i_s}\circ p_{i_{s+1}}\circ z$ where  $z$ is
$p$ with $p_{i_1},p_{i_2},\cdots, p_{i_{s+1}}$ deleted. But then we can shuffle  $z$
to obtain a word $ p_{j_{s+1}}\circ p_{j_{s+2}}\circ\cdots\circ p_{j_k}\circ w$ where $w$ is $p$ with
$p_{i_1},p_{i_2},\cdots, p_{i_{s+1}}, p_{j_{s+1}},  p_{j_{s+2}}, \cdots, p_{j_k}$
deleted.  Again, this contradicts the maximality of
$|a|$. We deduce that $i_s=j_s$ for $1\leq s\leq k$ and hence $[a]=[b]$. Clearly  then  $[x]=[y]$ follows.

Suppose now that $[u]=[a][x]$ as in (1), and shuffle  $x$ to  left Foata normal form
 $x_1\circ \cdots \circ x_m$, where the $x_i$ are the blocks for $1\leq i\leq m$. Clearly, since
 $s(x_1)$ is complete, $x_1$ cannot contain any left invertible letters, else this would contradict the maximality of $|a|$.
\end{proof}

To simplify the description of $\mathcal{R}^*$ on $\mathscr{GP}$ we now present two technical lemmas.

\begin{lemma}\label{key-1}
Let $x=x_1\circ \cdots \circ x_n\in X^*$ and $(\alpha, \beta)\not\in E$. Suppose that    $x_l$ is non-left invertible with $s(x_l)=\beta$, for some $1\leq l\leq n$, and $s(x_k)$ is neither $\alpha$ nor $\beta$ for all $l<k\leq n$.  Let  $z=z_1 \circ \cdots \circ z_m\in X^*$ be any reduced form of $x_1\circ \cdots \circ x_n$. Then
 $\beta\in s(z)$ and if $j$ is greatest such that $1\leq j\leq m$ with $s(z_j)=\beta$, then  $z_j$ is non left invertible, and  $s(z_t)\neq \alpha$ for all $j<t\leq m$.
\end{lemma}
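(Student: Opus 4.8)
The plan is to reduce the statement to the verification of a single conveniently chosen reduced form, and then to build such a form by induction on the number $n-l$ of letters lying to the right of $x_l$. First I would establish that all three assertions are invariant under shuffle equivalence, so that by Proposition~\ref{shuffle} it suffices to check them for \emph{one} reduced form of $x_1\circ \cdots \circ x_n$. Since two letters of support $\beta$ can never be interchanged by a shuffle (there are no loops, so $(\beta,\beta)\notin E$), shuffle equivalent words carry the same ordered list of $\beta$-letters; hence ``$\beta\in s(z)$'' and ``the last $\beta$-letter, as an element of $X$, is a given non-left-invertible element'' are both shuffle invariant. Likewise, because $(\alpha,\beta)\notin E$, no $\alpha$-letter can be shuffled past a $\beta$-letter, so the relative order of any $\alpha$-letter and the rightmost $\beta$-letter is preserved; thus ``no $\alpha$-letter occurs to the right of the last $\beta$-letter'' is shuffle invariant as well.

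For the base case $l=n$ I would write $x=x_1\circ \cdots \circ x_{n-1}\circ x_l$, take a reduced form $z'$ of $x_1\circ \cdots \circ x_{n-1}$, and apply the right-hand dual of Lemma~\ref{product reduction} to $z'\circ x_l$ (note $x_l\notin I$, being non-left-invertible). If $z'\circ x_l$ is already reduced then $x_l$ is itself the rightmost $\beta$-letter, is non-left-invertible, and has nothing to its right. Otherwise $x_l$ is shuffled past some letters and glued to a letter $z'_k$ of support $\beta$, after which every letter to the right of position $k$ has support adjacent to $\beta$, hence different from both $\beta$ (no loops) and $\alpha$ (as $(\alpha,\beta)\notin E$). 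The merged element is $z'_k x_l$, and here non-left-invertibility of $x_l$ is decisive: $z'_k x_l$ cannot be left invertible, for otherwise $x_l$ would be; in particular it is not an identity, so the $\beta$-letter does not vanish. Thus $z'_k x_l$ is the rightmost $\beta$-letter, is non-left-invertible, and has no $\alpha$ to its right.

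For the inductive step, with $n-l\ge 1$, I would set $\gamma=s(x_n)$, noting $\gamma\neq \alpha,\beta$. Applying the induction hypothesis to $x_1\circ \cdots \circ x_{n-1}$ (whose hypotheses plainly still hold) yields a reduced form $z'$ with the desired properties, and I would reattach $x_n$ on the right, again invoking the dual of Lemma~\ref{product reduction}. Because $\gamma\neq \alpha,\beta$, every outcome — $z'\circ x_n$ already reduced, a merge replacing some $z'_k$ of support $\gamma$ by $z'_k x_n$ in place, or the deletion of such a $z'_k$ when $z'_k x_n$ is an identity — leaves the set and relative order of the $\alpha$- and $\beta$-letters of $z'$ untouched. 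Hence the rightmost $\beta$-letter and its non-left-invertibility persist, and still no $\alpha$-letter lies to its right, producing a reduced form of $x_1\circ \cdots \circ x_n$ with the required properties. Together with the shuffle invariance from the first paragraph, this proves the lemma.

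The hard part, I expect, is the bookkeeping in the base case: extracting from the dual of Lemma~\ref{product reduction} the precise fact that every letter surviving to the right of the merged $\beta$-letter has support adjacent to $\beta$, together with the repeated use of ``left invertible times non-left-invertible is non-left-invertible'' to guarantee both that $\beta$ survives reduction and that the surviving rightmost $\beta$-letter stays non-left-invertible. The edge condition $(\alpha,\beta)\notin E$ is used at two distinct points — blocking an $\alpha$-letter from crossing a $\beta$-letter, and forcing the supports adjacent to $\beta$ to avoid $\alpha$ — and keeping these two roles separate is the only real subtlety.
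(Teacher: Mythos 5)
Your proof is correct, but it takes a genuinely different route from the paper's. Both arguments open with the same key observation: the three conclusions are shuffle-invariant (you justify this carefully via the absence of loops and the hypothesis $(\alpha,\beta)\notin E$, whereas the paper merely asserts it), so by Proposition~\ref{shuffle} it suffices to exhibit one good reduced form. From there the paper inducts on the total length $n$: it reduces the prefix $x_1\circ\cdots\circ x_{l-1}$ and the suffix $x_{l+1}\circ\cdots\circ x_n$ separately, and when the concatenation $w_1'\circ x_l\circ w_2'$ fails to be reduced it manufactures a strictly shorter word equivalent to $x$ that still satisfies the hypotheses of the lemma (either with a new distinguished letter $u_hx_l$ of support $\beta$, or with a letter of $w_1'$ glued across $x_l$ to a letter of $w_2'$), and invokes the induction hypothesis on that shorter word. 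You instead induct on the suffix length $n-l$ and build a good reduced form constructively, appending one letter at a time on the right via the dual of Lemma~\ref{product reduction}. The two decisive facts in your base case --- that $z'_kx_l$ inherits non-left-invertibility from $x_l$ and so cannot vanish under reduction, and that every letter past which $x_l$ is shuffled has support adjacent to $\beta$, hence equal to neither $\beta$ nor $\alpha$ --- are exactly the facts the paper also exploits, but your bookkeeping is more local: each step touches at most one letter, and you avoid the paper's more delicate second case entirely. What your approach buys is a cleaner invariant (the conclusion itself is carried through each right-multiplication, rather than the hypotheses of the lemma being re-verified for a modified word); what the paper's buys is that it never needs to single out a particular reduced form of the prefix. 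The only loose end in your write-up is the trivial subcase $x_n\in I$ in the inductive step, where Lemma~\ref{product reduction} does not apply but $z'$ is itself already a reduced form of $x$; this deserves one sentence.
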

\begin{proof}  We begin by observing that if we can find one reduced form of $x$ with the required property, then all reduced forms will have the required property.

We proceed by induction on $n$. If $n=1=l$ the result is clear,
since $x=x_1$ is the only reduced form of $x$. Suppose now that $n>1$ and the result is true for all words of length strictly less than $n$.

Let $w_1= x_1\circ \cdots \circ x_{l-1}$, $w_2=x_{l+1}\circ \cdots \circ x_n$ and let
$w_1',w_2'\in X^*$ be reduced  such that $[w_1]=[w_1']$ and $[w_2]=[w_2']$.
Certainly  $\alpha,\beta\notin s(w_2')$.
Let $w_1'=u_1\circ \cdots\circ u_h$ and  $w_2'=v_1\circ \cdots\circ v_r$. If
$w_1'\circ x_l\circ w_2'$ is  a reduced form, then we are done.

Suppose therefore that $w_1'\circ x_l\circ w_2'$ is not a reduced form, and consider first
\[w_1'\circ x_l=u_1\circ \cdots\circ u_h\circ x_l.\]
If $w_1'\circ x_l$   is not a reduced form  then, from Remark~\ref{cor:concatenate}, there exists
some $t$ with $1\leq t\leq h$ with $s(u_t)=\beta$ and $(s(u_k),\beta)\in E$ for all $t<k\leq h$.
By shuffling $w_1'$, without loss of generality we can assume that $t=h$.
 Let $p=u_hx_l$ and notice  that as $x_l$ is not left invertible, then neither is $p$, and certainly $p\neq \epsilon$. Then
\[y=u_1\circ \cdots\circ u_{h-1}\circ p\circ v_1\circ \cdots\circ v_r\]
has length strictly less than $n$, $s(p)=\beta$, $p$ is not left invertible, and
$\alpha,\beta\notin s(v_1\circ\cdots\circ v_r)$.

On the other hand, if $w_1'\circ x_l$ is  a reduced form, then again by Remark~\ref{cor:concatenate}, and making use of the fact
$\beta\notin s(w_2')$, we may assume that $s(u_h)=s(v_1)$ and $(\beta,s(u_h))\in E$.
Then
\[y= u_1\circ \cdots\circ u_{h-1}\circ u_hv_1\circ x_l\circ v_2\circ\cdots\circ v_r\]
has length strictly less than $n$,  and
$\alpha,\beta\notin s(v_2\circ\cdots\circ v_r)$.

In each case we have  found a word $y$ with $[y]=[x]$ to which we can apply the induction hypothesis.  The result follows. \end{proof}

\begin{lemma}\label{key-2}
Let $x=x_1\circ \cdots \circ x_n$ be a left  Foata normal form
with blocks $x_i, 1\leq i\leq n$, such that $x_1$ contains no  left invertible letters. Let $u\in X^*$ and let $z$ be a reduced form of $u\circ x_1$. Then $z\circ x_2\circ \cdots \circ x_n$ is a reduced form of $u\circ x$.
\end{lemma}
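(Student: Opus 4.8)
The plan is to reduce the statement to a reducedness check. Writing $y=x_2\circ\cdots\circ x_n$, we have $[z\circ y]=[u\circ x_1\circ x_2\circ\cdots\circ x_n]=[u\circ x]$ because $[z]=[u\circ x_1]$; hence, by the definition of a reduced form, it suffices to show that $z\circ y$ is \emph{reduced}. Here $y$ is reduced, being a factor of the reduced word $x$, and by Remark~\ref{rem:lfnf}(iii) it is a left Foata normal form with blocks $x_2,\dots,x_n$; and $z$ is reduced by hypothesis. I would dispose of $n=1$ at once (then $y=\epsilon$ and $z\circ y=z$ is a reduced form of $u\circ x_1=u\circ x$), and assume $n\geq 2$ throughout. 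I then argue by contradiction: if $z\circ y$ is not reduced, Remark~\ref{cor:concatenate} produces a \emph{collision}, namely a vertex $\gamma$ and letters $z_i$ of $z$, $y_j$ of $y$ with $s(z_i)=s(y_j)=\gamma$ such that every letter of $z$ to the right of $z_i$ and every letter of $y$ to the left of $y_j$ has support adjacent to $\gamma$ in $\Gamma$. Since $\Gamma$ has no loops, $z_i$ is the last occurrence of $\gamma$ in $z$ and $y_j$ the first in $y$.

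The next step is to pin down $\gamma$. First I would show $\gamma\in s(x_2)$: taking $m$ least with $\gamma\in s(x_m)$, if $m\geq 3$ then condition (iii) of Definition~\ref{defn:lcrf} applied to the consecutive blocks $x_{m-1},x_m$ of $y$ yields $\beta'\in s(x_{m-1})$ with $(\gamma,\beta')\notin E$; a letter of support $\beta'$ then occurs in $y$ strictly before the first $\gamma$ (which lies in $x_m$) and fails to commute with $\gamma$, contradicting the collision condition, so $m=2$. Next I would show $\gamma\notin s(x_1)$: otherwise, since each block has complete support in which every vertex occurs exactly once, the unique $\gamma$ of $x_1$ and the unique $\gamma$ of $x_2$ would have only $\gamma$-adjacent letters between them in the reduced word $x$, contradicting reducedness. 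Finally, applying condition (iii) of Definition~\ref{defn:lcrf} to the blocks $x_1,x_2$ of $x$ with $\gamma\in s(x_2)$ furnishes $\beta\in s(x_1)$ with $(\gamma,\beta)\notin E$, and $\beta\neq\gamma$ since $\gamma\notin s(x_1)$ while $\beta\in s(x_1)$.

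The heart of the argument, which I expect to be the main obstacle, is to transport this non-adjacency into the reduced form $z$ of $u\circ x_1$ via Lemma~\ref{key-1}. Since $\beta\in s(x_1)$ and $s(x_1)$ is complete, $x_1$ contains a single letter $b$ of support $\beta$, and $b$ is non-left-invertible precisely because $x_1$ contains no left-invertible letters; this is the one place the hypothesis is used. In $u\circ x_1$ every letter to the right of $b$ lies in the tail of $x_1$ and so has support in $s(x_1)\setminus\{\beta\}$, which avoids both $\beta$ and $\gamma$ (as $\gamma\notin s(x_1)$). Thus the hypotheses of Lemma~\ref{key-1} hold for $u\circ x_1$ with this $\beta$ and with $\alpha=\gamma$, and the lemma gives that in $z$, writing $j'$ for the position of the last $\beta$, no letter to the right of position $j'$ has support $\gamma$. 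But $z_i$ has support $\gamma$, forcing $i\leq j'$: the case $i=j'$ would give $\gamma=\beta$, which is excluded, while $i<j'$ places the $\beta$-letter $z_{j'}$ to the right of $z_i$, so the collision condition yields $(\beta,\gamma)\in E$, contradicting $(\gamma,\beta)\notin E$. This contradiction shows $z\circ y$ is reduced, completing the proof. The delicate point throughout is verifying the hypotheses of Lemma~\ref{key-1} — in particular the non-left-invertibility of the $\beta$-letter and the absence of $\gamma,\beta$ to its right — which is exactly what the completeness of $x_1$, the exclusion $\gamma\notin s(x_1)$, and the no-left-invertible-letters assumption secure.
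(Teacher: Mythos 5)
Your proposal is correct and follows essentially the same route as the paper's own proof: extract a collision via Remark~\ref{cor:concatenate}, locate the colliding vertex in $s(x_2)\setminus s(x_1)$ using condition (iii) of Definition~\ref{defn:lcrf}, produce the non-adjacent, non-left-invertible letter $b$ in $x_1$, and apply Lemma~\ref{key-1} to $u\circ x_1$ to contradict the presence of that vertex at the relevant position of $z$. The only differences are presentational (you track indices rather than shuffling the colliding letters to the boundary, and you spell out the justification for $\gamma\notin s(x_1)$ that the paper leaves implicit), so nothing further is needed.
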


\begin{proof}  Certainly $[u\circ x]=[z\circ x_2\circ \cdots \circ x_n]$.
Let $z=z_1\circ \cdots \circ z_m$.
As both $z$ and $x_2\circ \cdots \circ x_n$ are reduced, if  $z\circ x_2\circ \cdots \circ x_n$ is not  reduced, then by Remark~\ref{cor:concatenate}
we can shuffle a letter $z_k$ of $z$ to the end of $z$ and a letter $a$  of $x_2\circ \cdots \circ x_n$
to the start of $x_2\circ \cdots \circ x_n$ where $s(z_k)=s(a)=\alpha$ say. We may assume that $k=m$ and as
$x_2\circ \cdots \circ x_n$ is a left Foata normal form, that $a$ is a letter of $x_2$, and then that it is the first letter of $x_2$.
Since  $x$ is in left Foata normal form, it follows that  $\alpha\not \in s(x_1)$ and there exists
a (unique)  letter $b$ in $x_1$ such that  $(\alpha,s(b))\not\in E$. Let $s(b)=\beta$; recall that  $b$ is  non-left invertible. It then follows from Lemma \ref{key-1} that $\beta\in s(z)$ and  if
 $t$ is greatest such that $1\leq t\leq m$ with $s(z_t)=\beta$, then    $s(z_h)\neq \alpha$ for all $t<h\leq m$. This contradicts the fact $s(z_m)=\alpha$.

We deduce that  $z\circ x_2\circ \cdots \circ x_n$  is a reduced form of $u\circ x$, as required.
\end{proof}

 We can now get our first handle on the  consideration of the $\ars$-class of an element of $\mathscr{GP}$ in the general case. Subsequently, we will focus on the case where the vertex monoids are abundant.

\begin{Prop}\label{step-1}
\begin{enumerate} \item
Let $x=x_1\circ \cdots \circ x_n$ be a left Foata normal form with blocks $x_i$, $1\leq i\leq n$, such that $x_1$ contains no  left invertible letters. Then   $ [x]\, \mathcal{R}^*\, [x_1]$.
\item  Let $p\in X^*$. Then  $[p]=[a][x]$ where $a\circ x$ is reduced, the letters of $a$ are all left invertible,  $|a|$ is maximum with respect to these constraints and $x$  is a left Foata normal form $x$ as in (1). Further,  $[p]\,\ars\, [a][x_1]$.
\end{enumerate}
\end{Prop}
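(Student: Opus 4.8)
The plan is to prove both parts by reducing everything to the cancellation machinery of Lemma~\ref{block}, combined with the reduced-form control supplied by Lemma~\ref{key-2}. I would treat Part~(1) first, since Part~(2) follows quickly once we invoke that $\ars$ is a left congruence.

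For Part~(1) I want to show $[x]\,\ars\,[x_1]$, i.e. that for all $[u],[v]\in\mathscr{GP}$ we have $[u][x]=[v][x]$ if and only if $[u][x_1]=[v][x_1]$. Write $x=x_1\circ x'$ with $x'=x_2\circ\cdots\circ x_n$. The backward implication is immediate: from $[u][x_1]=[v][x_1]$, multiplying on the right by $[x']$ gives $[u][x]=[v][x]$, since $[x]=[x_1][x']$. The forward implication is the substantive one. First I would fix reduced forms $u',v'$ of $u,v$, and choose reduced forms $z_u$ of $u'\circ x_1$ and $z_v$ of $v'\circ x_1$. Because $x$ is a left Foata normal form whose first block $x_1$ contains no left invertible letters, Lemma~\ref{key-2} applies to both products and tells us that $z_u\circ x_2\circ\cdots\circ x_n$ is a reduced form of $u'\circ x$ and $z_v\circ x_2\circ\cdots\circ x_n$ is a reduced form of $v'\circ x$.

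Now from the hypothesis $[u][x]=[v][x]$ these two reduced words represent the same element, so by Proposition~\ref{shuffle} they have equal length; cancelling the common tail length forces $|z_u|=|z_v|$. I would then split both reduced words immediately after their initial $z$-block: the right factors are the identical word $x_2\circ\cdots\circ x_n$ and so certainly represent the same element, whence Lemma~\ref{block} forces the left factors to agree, namely $[z_u]=[z_v]$. Unwinding the reduced forms gives $[u][x_1]=[u'\circ x_1]=[z_u]=[z_v]=[v'\circ x_1]=[v][x_1]$, which completes the forward direction and hence Part~(1). For Part~(2), I would apply Lemma~\ref{decomposition}: its part~(1) produces $[p]=[a][x]$ with $a\circ x$ reduced, $[a]$ left invertible and $|a|$ maximal (so by Lemma~\ref{invertible} the letters of $a$ are exactly the left invertible ones), and its part~(3) lets us take $x$ in left Foata normal form with first block $x_1$ having no left invertible letters. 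Part~(1) of the present proposition then gives $[x]\,\ars\,[x_1]$, and since $\ars$ is a left congruence (as recorded in Section~\ref{sec:preliminaries}), left-multiplying by $[a]$ yields $[a][x]\,\ars\,[a][x_1]$, that is $[p]\,\ars\,[a][x_1]$.

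I expect the main obstacle to be the bookkeeping in the forward direction of Part~(1): one must check that the hypotheses of Lemma~\ref{key-2} genuinely apply to both $u'\circ x$ and $v'\circ x$, and then verify the length and splitting hypotheses of Lemma~\ref{block} carefully enough to cancel the common right factor $x_2\circ\cdots\circ x_n$. Once these reductions are set up correctly, the remaining manipulations are purely formal.
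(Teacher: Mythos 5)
Your proposal is correct and follows essentially the same route as the paper: the forward direction of Part (1) is handled exactly as in the paper's proof, by applying Lemma~\ref{key-2} to obtain reduced forms $z_u\circ x_2\circ\cdots\circ x_n$ and $z_v\circ x_2\circ\cdots\circ x_n$ and then cancelling the common tail via Lemma~\ref{block}, and Part (2) is deduced from Lemma~\ref{decomposition} together with the fact that $\ars$ is a left congruence. The only cosmetic difference is that you pass to reduced forms of $u,v$ before invoking Lemma~\ref{key-2}, which is harmless but unnecessary since that lemma applies to arbitrary words $u\in X^*$.
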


\begin{proof} (1) Let $[p],[q]\in \mathscr{GP}$. Clearly it suffices to show that if
$[p][x]=[q][x]$, then $[p][x_1]=[q][x_1]$.
Suppose therefore that $[p][x]=[q][x]$ and
let $(p\circ x_1)'$ and $(q\circ x_1)'$ be reduced forms of $p\circ x_1$ and $q\circ x_1$, respectively. By Lemma \ref{key-2}, $(p\circ x_1)'\circ x_2 \circ \cdots\circ  x_n$ and $(q\circ x_1)'\circ x_2\circ \cdots \circ x_n$ are reduced forms of $p\circ x_1\circ \cdots \circ x_n$ and $q\circ x_1\circ \cdots \circ x_n$, respectively. It then follows from Lemma \ref{block} that  $[(p\circ x_1)']=[(q\circ x_1)']$ and so $[p][x_1]=[q][x_1]$.

(2)  This existence of $a$ and $x$ is guaranteed by Lemma~\ref{decomposition}, and then the result follows from (1) and the fact that $\ars$ is a left congruence.
\end{proof}

\section{ Graph products of left abundant monoids are left abundant}\label{sec:abundancy}

 The aim of this section is to prove the claim of the heading; this will involve us in some combinatorial intricacies. It might be helpful to the reader if we outline our strategy here.
Proposition~\ref{step-1} is our first step in describing $\ars$ in $\mathscr{GP}$. In  Proposition~\ref{step-2} we show that if $z=z_1\circ \cdots \circ z_n\in X^*$ is a complete block, then $[z]~\mathcal{R}^*~[z']$ where $z'=z_1'\circ\cdots\circ z_n'$ is chosen such that  $z_i'\in M_{s(z_i)}$ and
$z_i\,\mathcal{R}^*\, z_i'$ in $M_{s(z_i)}$ for all $1\leq i\leq n$. In particular, if each $M_i$ is left abundant, then  for any idempotents $z_i^+$ with $z_i~\mathcal{R^*}~z_i^+$ in $M_{s(z_i)}$,
 we have that $[z]$ is $\mathcal{R}^*$-related to the  idempotent $[z^+]$ where $z^+=
 z_1^+\circ \cdots \circ z_n^+$. Proposition~\ref{step-1} tells us that for $p\in X^*$ we can write $[p]=[a][x]$ where $a\circ x$ is reduced, the letters of $a$ are all left invertible, and $x$  is a left Foata normal form, the first block of which contains no left invertible letters. Moreover, calling this first block $z$ we have that  $[p]\,\ars\, [a][z]$. As $\ars$ is a left congruence, $[p]\,\ars\, [a][z^+]$ and then
 as $[a]$  has a left inverse $[a']$ (so that $[a']\,\ar\, [\epsilon])$ we have $[p]\,\ars\, [a][z^+][a']$. The fact that $[a][z^+][a']$ is idempotent is easily seen.

To arrive at Proposition~\ref{step-2} we cannot escape a very careful analysis of  products $[x][z]$ in $\mathscr{GP}$ (remember, we are considering equations of the form $[x][z]=[y][z]$). To this end we find a new factorisation of elements in $\mathscr{GP}$ that allows us to cancel and replace a final term in equalities. This we achieve in Lemma~\ref{phi}.

To arrive at Lemma~\ref{phi} we now define the  notions of $\alpha$-absorbing, $\alpha$-good and subsequently a stronger version of being $\alpha$-good that we call $\alpha$-amenable, where $\alpha\in V$.
We show in Proposition~\ref{absorbed} that in an $\alpha$-amenable word, the inner factor reduces to a word which does not have $\alpha$ in its support. This enables us to pin down exactly which letters we can move
to the right of  a word (see   Definition ~\ref{def of N}) and hence we arrive at the factorisation of Lemma~\ref{phi}.

First,   we  need to recall the description of  idempotents in
$\mathscr{GP}$ from \cite{dacosta:2001}.

\begin{defn}\label{standard idempotent}
We say that an idempotent of $\mathscr{GP}$ is in {\em standard form} if  it is written as $[u]$ where $u=b\circ e\circ b'\in X^*$ is  reduced,
\[b=b_1\circ \cdots\circ b_n,\,\,
e= e_1\circ \cdots\circ e_m, b'= b_n'\circ\cdots \circ b_1'\]
where $b_i'b_i$ is an identity for $1\leq i\leq n$, $s(e)$ is complete and $e^2_i=e_i$ for $1\leq i\leq m$.
\end{defn}

Note that $[u]$ is idempotent for any word $u$ of the form in Definition
~\ref{standard idempotent}.

 \begin{lemma}\cite[Theorem 14.2]{dacosta:2001}\label{idempotent}
Any idempotent in $ \mathscr{GP}$ can be written in standard form.
\end{lemma}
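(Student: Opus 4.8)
The Note following Definition~\ref{standard idempotent} already establishes that every word of the prescribed shape represents an idempotent, so the plan is to prove the converse: every idempotent of $\mathscr{GP}$ has a representative in standard form. I would argue by induction on the length $n$ of a reduced form $w$ of a given idempotent $[w]=[w]^2$. When $n=0$ the element is the identity, standard with $b,e,b'$ all empty. For the inductive step, I would first use Lemma~\ref{decomposition}(1),(3) to factor $[w]=[a][x]$, where $a$ is a left-invertible prefix of maximal length (each letter of $a$ left invertible) and $x$ is in left Foata normal form $x_1\circ\cdots\circ x_m$ whose leading block $x_1$ contains no left-invertible letter; dually, I would peel a maximal right-invertible suffix. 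By Lemma~\ref{invertible} there is a word $a'$, the reversed juxtaposition of letterwise left inverses of $a$, with $[a'][a]=[\epsilon]$.

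The idempotency is then exploited algebraically. Left-multiplying $[w]^2=[w]$ by $[a']$ and using $[a'][a]=[\epsilon]$ gives $[x][a][x]=[x]$, so $[e_0]:=[x][a]$ satisfies $[e_0]^2=[x][a][x][a]=[x][a]=[e_0]$ and $[e_0][x]=[x]$; that is, $[e_0]$ is an idempotent left identity for $[x]$. The target is to show that $x$ itself factors as $x=e\circ b'$, where $e=x_1$ is a complete block of \emph{idempotent} letters and $b'$ is a right-invertible tail with $[b'][a]=[\epsilon]$. Granting this, $[e_0]=[e][b'][a]=[e]$ and $[w]=[a][x]=[a][e][b']$, which is precisely the standard form $b\circ e\circ b'$ of Definition~\ref{standard idempotent} with $b=a$. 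Equivalently, the whole problem reduces to a \emph{core case}: an idempotent no reduced form of which begins with a left-invertible letter or ends with a right-invertible letter must be a single complete block of idempotent letters (standard form with $b=b'=\epsilon$).

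The core case is where the real work lies, and I would attack it by a square-and-count argument. By Proposition~\ref{shuffle} any two reduced forms of one element are shuffle equivalent, hence carry the same multiset of letters; moreover a shuffle only transposes adjacent letters of distinct (so commuting) supports, whence for each vertex $\alpha$ the \emph{subsequence} of $\alpha$-letters is an invariant of the element. Applying this to $[w]=[w]^2$, I would compare the $\alpha$-subsequence of $w$ (of length $d_\alpha$) with the one read off from a reduction of $w\circ w$ (which begins with length $2d_\alpha$). Because $w$ is reduced, two consecutive $\alpha$-letters within a single copy of $w$ are separated by a letter not commuting with $\alpha$, so $\alpha$-letters can only merge across the junction between the two copies; tracking these merges forces $d_\alpha=1$, forces the unique $\alpha$-letter to satisfy $x^2=x$, and—ranging over all occurring $\alpha$—forces $s(w)$ to be complete so that every required merge is actually available. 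Throughout I would control the reduced forms of the relevant products using Corollary~\ref{product of reduced}, Lemma~\ref{key-2} and the cancellation Lemma~\ref{block}, finally reassembling the standard form and invoking the induction hypothesis on the strictly shorter idempotent $[e_0]$.

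The main obstacle is exactly this interleaving bookkeeping. I expect two delicate points. First, in the core case one must verify that the non-commuting separators guaranteed by reducedness survive the reduction of $w\circ w$, so that no spurious additional merges or cancellations occur and the count $2d_\alpha\to d_\alpha$ stays tight; it is here that the hypotheses of reducedness and of stripped end-invertibility do the essential work. Second, in the reduction to the core case one must show that the right-invertible suffix produced by peeling is forced, via the equation $[x][a][x]=[x]$ together with the maximality of $|a|$, to be the letterwise inverse in reverse order of the prefix $a$ (so that $[b'][a]=[\epsilon]$), rather than merely some unrelated right-invertible tail. Pinning down this inverse relationship, rather than just the existence of some idempotent core, is the crux of the argument.
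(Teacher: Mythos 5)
The paper offers no proof of this lemma: it is imported wholesale from da Costa \cite[Theorem 14.2]{dacosta:2001}, so there is no internal argument to match and what you propose is necessarily a new proof. Your overall shape --- peel a maximal left-invertible prefix $[a]$ via Lemma~\ref{decomposition}, pass to the conjugated idempotent $[e_0]=[x][a]$, and isolate a ``core case'' of idempotents with no invertible letter at either end --- is consistent with what the standard form looks like once one already knows it exists. But as a proof it has genuine gaps in exactly the places you flag, and flagging them does not fill them.

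First, the reduction to the core case does not go through. From $[w]^2=[w]$ you correctly get $[x][a][x]=[x]$ and that $[e_0]=[x][a]$ is idempotent, but to reassemble a standard form you must pass from $[x][a]=[e]=[e][a'][a]$ to $[x]=[e][a']$, i.e.\ cancel $[a]$ on the \emph{right}. Left invertibility of $[a]$ only gives $[a'][a]=[\epsilon]$ and hence injectivity of \emph{left} multiplication by $[a]$; in general $[a][a']\neq[\epsilon]$ (take a vertex monoid containing a left-invertible, non-right-invertible element, e.g.\ the bicyclic monoid), so right cancellation of $[a]$ is simply unavailable, and no substitute mechanism is offered. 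Relatedly, the induction is not well-founded: $[e_0]$ is represented by $x\circ a$, whose reduced form need not be strictly shorter than $a\circ x$, and nothing guarantees that $[e_0]$ lands in the core case. Second, within the core case the pivotal claim that $\alpha$-letters ``can only merge across the junction'' of $w\circ w$ is unjustified: two $\alpha$-letters in one copy of $w$ are separated by some letter $y$ with $(s(y),\alpha)\notin E$, but your hypotheses constrain only the first and last letters of reduced forms, not interior letters such as $y$, which could a priori be annihilated (merged to an identity and deleted by step (id)) during the reduction of $w\circ w$, permitting a within-copy merge and breaking the count $2d_\alpha\to d_\alpha$. Both obstacles are repairable, but repairing them amounts to the detailed analysis of how $w\circ w$ reduces to $w$ that constitutes da Costa's actual proof; as it stands the proposal is a plausible plan rather than a proof.
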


\begin{defn}\label{defn of i-absorbed}
Let $\alpha\in V$. A word $x\in X^*$ is said to be $\alpha$-{\it absorbing} if $\alpha$ is not in the support of  any reduced form of $x$.
\end{defn}

\begin{defn}\label{defn of i-good}
Let $\alpha\in V$. A word $x\in X^*$ is said to be $\alpha$-{\it good} if for all $\beta$ in the support of
any reduced form of $x$, we have $\beta=\alpha$ or $(\beta, \alpha)\in E$.
\end{defn}

We remark that in Definitions \ref{defn of i-absorbed} and \ref{defn of i-good}, $\alpha$ may not be in the support of $x$.  If for any $\beta$ in the support of $x$, we have $\beta=\alpha$ or $(\beta, \alpha)\in E$, then certainly $x$ is $\alpha$-good, but the converse need not be true. If $[x]=[y]$, or $x,y$ are reduced and $s(x)=s(y)$, then $x$ is $\alpha$-good (resp. $\alpha$-absorbing) if and only if $y$ is $\alpha$-good (resp. $\alpha$-absorbing). Further, as $s(\epsilon)=\emptyset$, we have that $\epsilon$ is both $\alpha$-good and $\alpha$-absorbing, and hence so is $1_\beta$ for all $\beta\in V$.  Finally, if $w\in X^+$ and $s(w)=\{ \alpha\}$ for some $\alpha\in V$, then $w$ is $\alpha$-good.
By Remark \ref{observations for Fountainicity} we have:

\begin{lemma} \label{observations for Fountainicity-1} Let $x=x_1\circ \cdots \circ x_n, y=y_1\circ \cdots \circ y_n\in X^*$ be such that $x_i, y_i\not \in I$ and $s(x_i)=s(y_i)$ for all $1\leq i\leq n$. If one of $x,x^r,y,y^r$ is a reduced word that is $\alpha$-good, then so are all four.
\end{lemma}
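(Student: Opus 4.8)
The plan is to reduce the whole statement to the fact that both of the relevant properties, \emph{reduced} and \emph{$\alpha$-good}, are determined by supports alone. The reducedness part is already handled for us, so essentially all that remains is to check that $\alpha$-goodness is invariant under the four operations at play (reversal, and passage between $x$ and $y$), and the key point driving this is that all four words $x, x^r, y, y^r$ carry one and the same set of supports.

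First I would invoke Remark~\ref{observations for Fountainicity} verbatim. Since $x_i, y_i \notin I$ and $s(x_i) = s(y_i)$ for all $1 \le i \le n$, that remark already guarantees that if one of $x, x^r, y, y^r$ is reduced, then all four are reduced. Thus it suffices to transfer $\alpha$-goodness, and from this point on I may assume all four words are reduced. Next I would record that the four words share a common support set: reversing a word leaves its support unchanged, so $s(x^r) = s(x)$ and $s(y^r) = s(y)$, while the hypothesis $s(x_i) = s(y_i)$ for every $i$ gives $s(y) = \{\, s(y_i) : 1 \le i \le n \,\} = \{\, s(x_i) : 1 \le i \le n \,\} = s(x)$. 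Hence $s(x) = s(x^r) = s(y) = s(y^r)$.

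Finally I would apply the observation recorded in the text immediately after Definition~\ref{defn of i-good}, namely that for reduced words $w, w'$ with $s(w) = s(w')$, $w$ is $\alpha$-good if and only if $w'$ is. (The reason is that, by Proposition~\ref{shuffle}, any reduced form of a reduced word is shuffle equivalent to it and hence has the same support; so for a reduced word $w$ the property of being $\alpha$-good is equivalent to the purely combinatorial condition that every $\beta \in s(w)$ satisfies $\beta = \alpha$ or $(\beta, \alpha) \in E$.) Since the four reduced words have a common support set, if one of them is $\alpha$-good then all four are, which completes the argument.

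There is essentially no obstacle here; the only thing to be careful about is the book-keeping that certifies both invariants as support-only. The reducedness invariance is delegated to Remark~\ref{observations for Fountainicity}, and the $\alpha$-goodness invariance for reduced words is precisely the remark following Definition~\ref{defn of i-good}, so once the common support set $s(x) = s(x^r) = s(y) = s(y^r)$ is noted there is nothing substantial left to verify.
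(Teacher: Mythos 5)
Your proposal is correct and is essentially the paper's own argument: the paper derives this lemma directly from Remark~\ref{observations for Fountainicity} (for reducedness) together with the observation following Definition~\ref{defn of i-good} that $\alpha$-goodness of a reduced word depends only on its support, which is common to all four words. Nothing further is needed.
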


The next lemma is crucial in allowing us to deduce the $\alpha$-goodness (or otherwise) of a word in terms of its factors.

\begin{lemma}\label{i-good}
Let $\alpha\in V$ and $x=x_1 \circ \cdots \circ x_n\in X^*$.

{\rm(i)} If $x_k\circ \cdots \circ x_n$ is $\alpha$-good for some $1\leq k\leq n$, then $x_1\circ \cdots \circ x_n$ is $\alpha$-good if and only if $x_1\circ \cdots \circ x_{k-1}$ is $\alpha$-good.

{\rm(ii)} If $x_1\circ \cdots \circ x_{k-1}$ is $\alpha$-good for some $1\leq k\leq n+1$,  then $x_1\circ \cdots \circ x_n$ is $\alpha$-good if and only if $x_k\circ \cdots \circ x_{n}$ is $\alpha$-good.
\end{lemma}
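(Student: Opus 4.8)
The plan is to translate $\alpha$-goodness into a condition on the support of a reduced form, and then to track that support through the concatenation $x=(x_1\circ\cdots\circ x_{k-1})\circ(x_k\circ\cdots\circ x_n)$. Call a vertex $\beta$ \emph{$\alpha$-bad} if $\beta\neq\alpha$ and $(\beta,\alpha)\notin E$. Since all reduced forms of a word share a common support (Proposition~\ref{shuffle}), Definition~\ref{defn of i-good} just says that a word $w$ is $\alpha$-good precisely when $s(\overline{w})$ contains no $\alpha$-bad vertex, where $\overline{w}$ denotes any reduced form of $w$.

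First I would record two facts about supports. Since neither a reduction step nor a shuffle (Definition~\ref{defn:reduction}) ever adds a vertex to the support, Lemma~\ref{lem:reduce} gives $s(\overline{w})\subseteq s(w)$ for all $w$; in particular, for reduced $p,q$ any reduced form of $p\circ q$ has support inside $s(p)\cup s(q)$. Conversely, Corollary~\ref{product of reduced} (a vertex of the right factor not occurring in the left factor survives) together with its left--right dual (valid by the duality remarked at the end of Section~\ref{sec:lfnf}) gives a survival principle: for reduced $p,q$, any $\beta\in s(q)\setminus s(p)$ lies in $s(\overline{p\circ q})$, and dually any $\beta\in s(p)\setminus s(q)$ lies in $s(\overline{p\circ q})$.

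For part~(i) I would set $u=x_1\circ\cdots\circ x_{k-1}$ and $v=x_k\circ\cdots\circ x_n$, fix reduced forms $\overline{u},\overline{v}$, and note that $\overline{u}\circ\overline{v}$ represents $x$. Because $v$ is $\alpha$-good, $s(\overline{v})$ has no $\alpha$-bad vertex, so every $\alpha$-bad vertex of $s(\overline{u})$ automatically lies in $s(\overline{u})\setminus s(\overline{v})$. The shrink fact then forces the $\alpha$-bad vertices of $s(\overline{x})$ to be among those of $s(\overline{u})$, while the dual survival principle forces every $\alpha$-bad vertex of $s(\overline{u})$ into $s(\overline{x})$; hence $s(\overline{x})$ and $s(\overline{u})$ have identical $\alpha$-bad vertices, giving ``$x$ is $\alpha$-good $\iff u$ is $\alpha$-good''. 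Part~(ii) runs symmetrically: with the prefix $u$ now $\alpha$-good, the same two facts (survival now via the original form of Corollary~\ref{product of reduced}, applied to the right factor $v$) show that $s(\overline{x})$ and $s(\overline{v})$ carry the same $\alpha$-bad vertices.

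I expect the only delicate point to be applying the survival principle to the correct factor --- its dual form (for the left factor) in~(i) and its original form (for the right factor) in~(ii) --- and using the $\alpha$-goodness hypothesis exactly to guarantee that the $\alpha$-bad vertices of the uncontrolled factor are disjoint from the support of the good factor, so that concatenation cannot cancel them. The boundary cases $k=1$ in~(i) and $k=n+1$ in~(ii), where one factor is $\epsilon$, are immediate, since $\epsilon$ is reduced with empty support and hence $\alpha$-good.
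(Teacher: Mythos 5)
Your proof is correct and takes essentially the same route as the paper's: the paper likewise gets the easy direction from the fact that supports cannot grow under reduction and shuffling, and the converse by applying the dual of Corollary~\ref{product of reduced} to reduced forms of the two factors, using $\alpha$-goodness of the controlled factor to ensure the offending vertex $\beta$ is absent from its support and therefore survives in the reduced form of the concatenation. Your packaging of both directions as ``$s(\overline{x})$ and the support of the uncontrolled factor carry the same $\alpha$-bad vertices'' is just a symmetric rephrasing of the same two ingredients.
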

\begin{proof}
Suppose that $x_k\circ \cdots \circ x_n$ is $\alpha$-good.

If $x_1\circ \cdots \circ x_{k-1}$ is $\alpha$-good,
then from Remark~\ref{cor:concatenate} and comments above it is  clear that $x_1\circ \cdots \circ x_n$ is $\alpha$-good.

Conversely, suppose that $x_1\circ \cdots \circ x_n$ is $\alpha$-good but $x_1\circ \cdots \circ x_{k-1}$ is not $\alpha$-good. Let $u_1\circ \cdots \circ u_m$ be a reduced form of $x_1\circ \cdots \circ x_{k-1}$. Then, by Definition \ref{defn of i-good}, there exists some $1\leq t\leq m$ such that $s(u_t)=\beta$ with $\beta\neq \alpha$ and $(\beta, \alpha)\not \in E$. As $x_1\circ \cdots \circ x_n$ is $\alpha$-good, $\beta$ is not in the support of the reduced form of  $x_1\circ \cdots \circ x_n$. Let $v_1\circ \cdots \circ v_l$ be a reduced form of  $x_k\circ \cdots \circ x_n$. As  $x_k\circ \cdots \circ x_n$ is $\alpha$-good, $\beta$ is not in the support of $v_1\circ \cdots \circ v_l$. Now consider the word
$(u_1\circ \cdots \circ u_m)\circ (v_1\circ \cdots \circ v_l)$. Of course, $$[(u_1\circ \cdots \circ u_m)\circ (v_1\circ \cdots \circ v_l)]=[x_1 \circ \cdots \circ x_n].$$ By the dual of Corollary \ref{product of reduced}, $\beta$ lies in the  support of the reduced form of $(u_1\circ \cdots \circ u_t)\circ (v_1\circ \cdots \circ v_l)$, and hence that of  $x_1 \circ \cdots \circ x_n$, contradiction.

The proof of (ii) is the dual of (i).
\end{proof}

\begin{Cor}\label{connected i-good} Let $x\in X^*$ and let $z,z',t\in X$ where
$s(z)=s(z')=\alpha, s(t)=\beta$ and $(\alpha,\beta)\in E$. The the following are equivalent:
\begin{enumerate} \item $x$ is $\alpha$-good;
\item $z\circ z'\circ x$ is $\alpha$-good;
\item $z'\circ x$ is $\alpha$-good;
\item $zz'\circ x$ is $\alpha$-good;
\item $z\circ t\circ x$ is $\alpha$-good.
\end{enumerate}
\end{Cor}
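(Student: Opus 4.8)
The plan is to prove the full cycle of equivalences by showing directly that each of the statements (2)--(5) is equivalent to (1), rather than chaining them cyclically. The only tool I need is Lemma~\ref{i-good}(ii): if a prefix $x_1\circ\cdots\circ x_{k-1}$ of a word is $\alpha$-good, then the whole word $x_1\circ\cdots\circ x_n$ is $\alpha$-good if and only if the complementary suffix $x_k\circ\cdots\circ x_n$ is $\alpha$-good. In each of (2)--(5) the word under consideration is of the form $w\circ x$, where $w$ is the short prefix $z\circ z'$, $z'$, $zz'$, or $z\circ t$. So I would apply Lemma~\ref{i-good}(ii) with the suffix taken to be exactly $x$: the conclusion is that $w\circ x$ is $\alpha$-good if and only if $x$ is $\alpha$-good, which is precisely the asserted equivalence with (1) --- provided I can check that the prefix $w$ is itself $\alpha$-good.

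Verifying that each prefix is $\alpha$-good is the routine part, and it is handled uniformly by the sufficient condition recorded just before Lemma~\ref{observations for Fountainicity-1}: a word every vertex of whose support equals $\alpha$ or is adjacent to $\alpha$ is $\alpha$-good. Indeed $s(z\circ z')=s(z')=s(zz')=\{\alpha\}$, since all these letters lie in $M_\alpha$, so these three prefixes are $\alpha$-good. For (5) the prefix is $z\circ t$ with $s(z\circ t)=\{\alpha,\beta\}$; here $\beta\neq\alpha$, but $(\alpha,\beta)\in E$ by hypothesis, so again every support vertex is $\alpha$ or adjacent to $\alpha$, whence $z\circ t$ is $\alpha$-good. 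Applying Lemma~\ref{i-good}(ii) in each of the four cases then yields (1)$\Leftrightarrow$(2), (1)$\Leftrightarrow$(3), (1)$\Leftrightarrow$(4) and (1)$\Leftrightarrow$(5), which delivers the complete set of equivalences.

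I do not expect a genuine obstacle here; the only points deserving care are the degenerate cases and the role of the edge. If any of $z,z',t$ happens to lie in $I$, the relevant prefix still has support contained in $\{\alpha\}$ or $\{\alpha,\beta\}$ and its reduced form is $\alpha$-good (recall that $\epsilon$, and hence each $1_\gamma$, is $\alpha$-good), so the argument is unaffected. The single essential use of a hypothesis is that $(\alpha,\beta)\in E$ is exactly what makes the prefix $z\circ t$ in (5) $\alpha$-good; without that edge the equivalence would fail. It is also worth noting that the difference between (2) and (4) --- whether the two $M_\alpha$-letters are left separate or collapsed to the single letter $zz'$ --- is invisible to the support and therefore to $\alpha$-goodness, which is precisely why both reduce to the same condition on $x$.
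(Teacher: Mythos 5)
Your proof is correct and follows essentially the same route as the paper: the paper likewise observes (via the remarks after Definition~\ref{defn of i-good}) that $z$, $z\circ z'$, $zz'$ and $z\circ t$ are $\alpha$-good and then invokes Lemma~\ref{i-good}. Your explicit attention to the degenerate cases and to the role of the edge $(\alpha,\beta)\in E$ is a welcome addition but does not change the argument.
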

\begin{proof}
From the remarks following Definition~\ref{defn of i-good}, $z,z\circ z'$, $zz'$ and $z\circ t$ are  $\alpha$-good. The result follows by Lemma~\ref{i-good}.
\end{proof}

 Our next definition is more subtle, but crucial for subsequent analysis of products in $\mathscr{GP}$.

\begin{defn}\label{def of condition p} Let  $\alpha\in V$ and
$x=x_1\circ \cdots \circ x_n\in X^*$ be $\alpha$-good. Then $x$  is said to be {\it $\alpha$-amenable}  if one of the following holds:

(i) $n\leq 2$;

(ii) $n>2$  and  either $\alpha\not \in s(x_2\circ \cdots \circ x_{n-1})$, or $\alpha\in s(x_2\circ \cdots \circ x_{n-1})$ and for all $x_k$ with $2\leq k\leq n-1$ such that $s(x_k)=\alpha$, the word $x_k\circ \cdots \circ x_n$ is not $\alpha$-good.
\end{defn}

 It might help to bear in mind that $x_k\circ \cdots \circ x_n$ is not $\alpha$-good if and only if there exists some $\beta\neq\alpha$ in the support of a reduced form, such that $(\alpha,\beta)\notin E$. Notice  that $\epsilon$ is $\alpha$-amenable for any $\alpha\in V$.

As we remarked earlier, for $x,y\in X^*$,  if $[x]=[y]$, then $x$ is $\alpha$-good if and only if $y$ is $\alpha$-good. One might  ask: Is it always true that $x$ is $\alpha$-amenable if and only if $y$ is $\alpha$-amenable? The answer is no, as illustrated by the following easy example. Let
$\alpha,\beta,\gamma$ be distinct elements of $ V$ with $(\alpha,\beta),(\alpha, \gamma)\in E$ and $a\in M_\alpha,
b\in M_\beta$ and $c\in M_\gamma$ non-identity elements.
The word $a\circ b\circ c$ is reduced, $\alpha$-amenable
(by virtue of $\alpha\notin s(b)$). On the other hand it shuffles to $b\circ a\circ c$
which is $\alpha$-good but not $\alpha$-amenable (as $s(a)=\alpha$ and $(\alpha,\gamma)\in E$).

On the positive side,  we have the following result.

\begin{lemma}\label{starter}
Let $\alpha\in V$ and $x=x_1\circ \cdots \circ x_n\in X^*$ be $\alpha$-amenable. Let $y$
be any word obtained by applying reduction steps and shuffles to $x_2\circ \cdots \circ x_{n-1}$. Then $x_1 \circ y\circ x_n$ is also $\alpha$-amenable.
\end{lemma}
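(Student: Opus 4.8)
The plan is to reduce the statement to the case of a single reduction step or shuffle and then iterate. Since $y$ is obtained from $x_2\circ\cdots\circ x_{n-1}$ by finitely many steps, it suffices, by induction on the number of steps, to prove: if $x_1\circ w\circ x_n$ is $\alpha$-amenable and $w'$ is obtained from $w$ by a single reduction step or shuffle, then $x_1\circ w'\circ x_n$ is $\alpha$-amenable. Here $w,w'$ play the role of the inner factor, and the two boundary letters $x_1,x_n$ are never touched. Throughout I will use freely that $\alpha$-goodness depends only on the class $[\,\cdot\,]$ (the remarks after Definition~\ref{defn of i-good}); in particular $x_1\circ w'\circ x_n$ is $\alpha$-good, since it represents the same element as the $\alpha$-good word $x_1\circ w\circ x_n$.

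First I dispose of the easy clauses of Definition~\ref{def of condition p}. If $x_1\circ w'\circ x_n$ has length at most $2$ it is $\alpha$-amenable by clause (i); and if $\alpha\notin s(w')$ it is $\alpha$-amenable by the first alternative of clause (ii), since a single step never enlarges the support. So I may assume the word is long and that $\alpha\in s(w')$, and the whole content is to check that for every letter $c$ of $w'$ with $s(c)=\alpha$ the suffix $T''(c)$, consisting of $c$ together with everything to its right up to and including $x_n$, is not $\alpha$-good. Since a step neither increases length nor enlarges support, in this case $x_1\circ w\circ x_n$ is itself long and has $\alpha\in s(w)$, so its $\alpha$-amenability must be witnessed by the suffix condition of clause (ii): every $\alpha$-letter $c^{*}$ of $w$ has its suffix $T(c^{*})$ not $\alpha$-good. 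The engine of the proof is then a \emph{suffix correspondence}: to each $\alpha$-letter $c$ of $w'$ I assign an $\alpha$-letter $c^{*}$ of $w$ such that $T''(c)$ and $T(c^{*})$ have the same $\alpha$-goodness; as each $T(c^{*})$ is not $\alpha$-good, neither is $T''(c)$, and the claim follows.

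The correspondence is read off from the type of step. For an identity deletion (id) or a vertex merge (v), each suffix $T''(c)$ is literally $[\,\cdot\,]$-equal to a suffix of $x_1\circ w\circ x_n$ (deleting an identity letter, or un-merging a product $w_iw_{i+1}$ back to $w_i\circ w_{i+1}$, does not change the represented element), so they have the same $\alpha$-goodness; in the only delicate instance, where two $\alpha$-letters merge, the merged letter's suffix matches that of the left factor $w_i$. For a shuffle swapping adjacent commuting letters $c_1\circ c_2\to c_2\circ c_1$, the suffix of any $\alpha$-letter lying strictly outside the swapped pair is merely shuffle equivalent before and after, hence represents the same element. The genuinely new case is the $\alpha$-letter being moved: if it is $c_1$ (with $s(c_1)=\alpha$) its suffix changes from $c_1\circ c_2\circ R$ to $c_1\circ R$, while if it is $c_2$ it changes from $c_2\circ R$ to $c_2\circ c_1\circ R$, where $R$ is the common remainder. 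In either case the letter inserted or dropped has support $E$-adjacent to $\alpha$, hence is itself $\alpha$-good, so by Lemma~\ref{i-good}(ii) both suffixes are $\alpha$-good if and only if $R$ is $\alpha$-good; thus they share the same $\alpha$-goodness. This settles the single-step claim, and the induction yields the lemma.

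I expect the shuffle step to be the main obstacle: it is the only move under which a suffix of an $\alpha$-letter genuinely changes as an element of $\mathscr{GP}$, so one cannot argue by mere $[\,\cdot\,]$-equality and must instead invoke Lemma~\ref{i-good} to see that prepending or deleting an $E$-adjacent (hence $\alpha$-good) letter leaves $\alpha$-goodness unchanged. A secondary point requiring care is that the assignment $c\mapsto c^{*}$ need not be a bijection: under a vertex merge two $\alpha$-letters of $w$ collapse to one in $w'$, and under an identity deletion an $\alpha$-letter of $w$ may disappear; but in every case each $\alpha$-letter of $w'$ still admits a matching $\alpha$-letter of $w$, which is all the argument requires.
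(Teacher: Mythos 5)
Your proof is correct and follows essentially the same route as the paper's: reduce to a single reduction step or shuffle, dispose of the short/$\alpha\notin s(w')$ cases, and then check that each $\alpha$-letter of the new inner word has a non-$\alpha$-good suffix by matching it with a suffix of the old word — via $[\,\cdot\,]$-invariance of $\alpha$-goodness for steps (id) and (v), and via Lemma~\ref{i-good} (equivalently Corollary~\ref{connected i-good}) for the shuffle, exactly as in the paper's Cases (1)--(3). The ``suffix correspondence'' packaging is a mild reorganisation of the same argument, not a different method.
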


\begin{proof} Clearly the result is true for $n\leq 2$ as here $x_2\circ \cdots \circ x_{n-1}=\epsilon $
and there are no steps to apply.

Assume now that $n>2$. Since $x$ is $\alpha$-good, so is any word   in the same equivalence class, so that   $x_1 \circ y\circ x_n$ is also $\alpha$-good. To show $x_1 \circ y\circ x_n$ is $\alpha$-amenable, it is sufficient to consider the case where $y$ is obtained from $p=x_2\circ \cdots \circ x_{n-1}$
in a single step.

Clearly, if $\alpha\notin s(p)$, then we are done; suppose therefore that $\alpha\in s(p)$.
We consider the following cases.

Case (1): $s(x_j)=\beta$ and $s(x_{j+1})=\gamma$ with $(\beta,\gamma)\in E$, where $2\leq j<n-1$.  We show that the word $$x'=x_1\circ x_2\circ \cdots \circ x_{j-1}\circ x_{j+1}\circ x_{j}\circ x_{j+2}\circ \cdots \circ x_{n-1}\circ x_n$$ is $\alpha$-amenable. Clearly, we are fine in the case where neither $\beta$ nor $\gamma$ equals $\alpha$.
If $\beta=\alpha$ (and so $\gamma\neq \alpha$), then, by Definition \ref{def of condition p}, $x_j\circ x_{j+1}\circ x_{j+2}\circ \cdots  \circ x_{n-1}\circ x_{n}$ is not $\alpha$-good. But, on the other hand, as $x_j\circ x_{j+1}$ is $\alpha$-good,  $x_{j+2}\circ \cdots \circ x_{n}$ is not $\alpha$-good by Lemma \ref{i-good}, and hence, again by Lemma \ref{i-good}, $x_{j}\circ x_{j+2}\circ \cdots \circ x_{n}$ is not $\alpha$-good.
For any $k$ with $2\leq k\leq n-1$ and $k\neq j,j+1$ with  $s(x_k)=\alpha$, it is clear that the factor $x_k\circ \cdots \circ x_n$ of $x'$ is not $\alpha$-good by the assumption that $x$ is $\alpha$-amenable.
Similarly if $\gamma=\alpha$.

Case (2): $s(x_j)=s(x_{j+1})=\beta$ where $2\leq j<n-1$. We show that the word
$$x''=x_1\circ x_2\circ \cdots \circ x_{j-1}\circ x_{j}x_{j+1}\circ x_{j+2}\circ  \cdots \circ x_{n-1}\circ x_n$$ is $\alpha$-amenable.  As in Case (1) it is enough to show that if
$\beta=\alpha$ then $x_jx_{j+1}\circ x_{j+2}\circ \cdots \circ x_n$ is not $\alpha$-good.
To this end, if $\beta=\alpha$, then as $x_{j}\circ x_{j+1}$ and $x_{j} x_{j+1}$ are  $\alpha$-good but $x_j\circ x_{j+1}\circ x_{j+2}\circ \cdots \circ x_n$ is not $\alpha$-good, we deduce from
Corollary~\ref{connected i-good} that $x_jx_{j+1}\circ x_{j+2}\circ \cdots \circ x_n$ is not $\alpha$-good.

Case (3): $s(x_j)=\beta$ and  $x_j=1_\beta$. An essentially vacuous argument easily gives  that  the word $$x'''=x_1\circ x_2\circ \cdots \circ x_{j-1}\circ x_{j+1}\circ x_{j+2} \cdots \circ x_{n-1}\circ x_n$$ is $\alpha$-amenable.
\end{proof}

The next corollary is immediate from Lemmas~\ref{lem:reduce} and ~\ref{starter}.

\begin{Cor}\label{cor:starter}
Let $\alpha\in V$ and $x=x_1\circ \cdots \circ x_n\in X^*$ be  $\alpha$-amenable. Let $y$
be a reduced form of  $x_2\circ \cdots \circ x_{n-1}$. Then $x_1 \circ y\circ x_n$ is also $\alpha$-amenable.
\end{Cor}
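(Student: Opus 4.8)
The plan is simply to chain the two cited results together, and there is essentially no real obstacle beyond matching up their hypotheses. First I would invoke Lemma~\ref{lem:reduce}: starting from the inner factor $x_2\circ \cdots \circ x_{n-1}$, a finite sequence of reduction steps and shuffles produces a reduced word representing the same element. A priori this yields one particular reduced word, whereas the corollary allows $y$ to be an \emph{arbitrary} reduced form of $x_2\circ \cdots \circ x_{n-1}$; but by Proposition~\ref{shuffle} all reduced forms of a given element of $\mathscr{GP}$ are shuffle equivalent. Hence any reduced form $y$ is reached from $x_2\circ \cdots \circ x_{n-1}$ by first running the reduction procedure of Lemma~\ref{lem:reduce} and then applying finitely many further shuffles. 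In particular, $y$ is obtained from $x_2\circ \cdots \circ x_{n-1}$ using only reduction steps and shuffles.

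Now I would apply Lemma~\ref{starter}, whose conclusion is precisely that if $x=x_1\circ \cdots \circ x_n$ is $\alpha$-amenable and $y$ is \emph{any} word obtained from the inner factor $x_2\circ \cdots \circ x_{n-1}$ by applying reduction steps and shuffles, then $x_1\circ y\circ x_n$ is again $\alpha$-amenable. Since the preceding paragraph shows that our reduced form $y$ is of exactly this type, it follows immediately that $x_1\circ y\circ x_n$ is $\alpha$-amenable, as required. The only point needing any attention is the one flagged above, namely that every reduced form of $x_2\circ \cdots \circ x_{n-1}$ (and not merely the specific output of one reduction procedure) arises through reductions and shuffles; this is handled cleanly by the shuffle-equivalence of reduced forms from Proposition~\ref{shuffle}. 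All the combinatorial difficulty has already been discharged in Lemma~\ref{starter}, so nothing further is needed here.
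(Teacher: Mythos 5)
Your proposal is correct and is essentially the paper's own argument: the paper derives this corollary directly from Lemma~\ref{lem:reduce} and Lemma~\ref{starter}, exactly as you do. Your extra remark that an arbitrary reduced form is reached from the output of Lemma~\ref{lem:reduce} by further shuffles (via Proposition~\ref{shuffle}) is a correct and worthwhile clarification of the word ``immediate'' in the paper, but it is not a different route.
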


\begin{lemma}\label{haha}
Let $x=x_1\circ \cdots \circ x_m, y=y_1\circ \cdots y_n\in X^*$ be reduced words. If $s(x_m)=\alpha$ but $\alpha\not \in s(y)$ and there exists $\beta\in s(y)$ with $(\beta, \alpha)\not \in E$, then $\beta$ must be in the support of the reduced form of $x\circ y$.
\end{lemma}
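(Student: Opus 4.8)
The plan is to prove a slightly stronger statement by induction on $m=|x|$, namely: under the stated hypotheses, \emph{every reduced form of $x\circ y$ contains a letter of support $\beta$ occurring to the right of all of its letters of support $\alpha$.} Since the reduced forms of an element are mutually shuffle equivalent (Proposition~\ref{shuffle}) and, because $(\alpha,\beta)\notin E$, a letter of support $\alpha$ can never be shuffled past one of support $\beta$, this property does not depend on the chosen reduced form; in particular it yields $\beta\in s(\overline{x\circ y})$, which is what we want. Throughout I will exploit the auxiliary fact that $\alpha$ itself survives: since $x$ is reduced with $\alpha\in s(x)$ (as $s(x_m)=\alpha$) and $\alpha\notin s(y)$, the dual of Corollary~\ref{product of reduced} gives $\alpha\in s(\overline{x\circ y})$, so there genuinely is a rightmost $\alpha$-letter to speak of.

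For the base case $m=1$ we have $x=x_1$ with $s(x_1)=\alpha$; as $\alpha\notin s(y)$, Remark~\ref{cor:concatenate} shows that $x_1\circ y$ is already reduced, and here the unique $\alpha$-letter $x_1$ sits to the left of all of $y$, hence to the left of some $\beta$-letter of $y$ (one exists as $\beta\in s(y)$). For the inductive step I apply the induction hypothesis to the reduced word $x_2\circ\cdots\circ x_m$ (which still ends in the letter $x_m$ of support $\alpha$) together with the same $y$, obtaining a reduced form $w$ of $x_2\circ\cdots\circ x_m\circ y$ that carries a distinguished $\beta$-letter $B$ lying to the right of the rightmost $\alpha$-letter $A$ of $w$. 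It then remains to prepend $x_1$ and analyse $\overline{x_1\circ w}$ by means of Lemma~\ref{product reduction}, splitting into cases on $s(x_1)$.

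The heart of the argument — and the main obstacle — is that a $\beta$-letter is \emph{not} protected by its mere presence: if some vertex monoid is a group then every $\beta$-letter is invertible and could in principle cancel, so one cannot simply track supports. This is precisely what the barrier letter $A$ resolves. If $s(x_1)=\beta$, then as $x_1$ is shuffled rightward inside $x_1\circ w$ it cannot cross $A$ (this would require $(\beta,\alpha)\in E$), so by Lemma~\ref{product reduction} any gluing or deletion it triggers involves only letters lying strictly to the left of $A$; thus $A$ and $B$ are untouched and the invariant persists. If instead $s(x_1)=\alpha$, then $x_1$ cannot cross $B$, so $B$ survives; the only danger is that $x_1$ reaches $A$ and annihilates it (their product being an identity), but were $A$ the only $\alpha$-letter this would remove $\alpha$ entirely, contradicting the auxiliary fact that $\alpha\in s(\overline{x\circ y})$ — hence some $\alpha$-letter remains to the left of $B$ and the invariant is restored. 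Finally, if $s(x_1)\notin\{\alpha,\beta\}$ then prepending $x_1$ affects neither the $\alpha$- nor the $\beta$-letters, and the relative order of the rightmost $\alpha$ and of $B$ is unchanged. In every case $\overline{x_1\circ w}=\overline{x\circ y}$ retains a $\beta$-letter to the right of all of its $\alpha$-letters, completing the induction and hence the proof.
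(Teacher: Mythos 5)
Your argument is correct, but it follows a genuinely different route from the paper's. The paper inducts on $|y|$: if $x\circ y$ is not reduced, it uses Remark~\ref{cor:concatenate} to locate a letter $y_j$ that glues onto some $x_k$, then observes that the hypotheses force $s(y_j)\neq\alpha$ (so $k\neq m$ and the new $x'$ still ends in an $\alpha$-letter) and $s(y_j)\neq\beta$ (since the glueing requires $(s(y_j),\alpha)\in E$), so that the shorter pair $x',y'$ satisfies the same hypotheses and the induction closes without any strengthening of the statement. You instead induct on $|x|$, peeling letters off the front via Lemma~\ref{product reduction}, and this obliges you to strengthen the induction hypothesis to the positional invariant that some $\beta$-letter sits to the right of every $\alpha$-letter --- rightly so, since without the $\alpha$-barrier a prepended letter of support $\beta$ could in principle travel to, and annihilate, the very $\beta$-letter whose existence the unstrengthened statement provides. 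Your supporting observations are all sound: the invariant is independent of the chosen reduced form because letters of supports $\alpha$ and $\beta$ can never be transposed by a shuffle; the dual of Corollary~\ref{product of reduced} guarantees an $\alpha$-letter always survives, which is exactly what rules out the one problematic subcase when $s(x_1)=\alpha$ (note that by Lemma~\ref{product reduction} the letter $x_1$ in fact glues onto the \emph{first} $\alpha$-letter of $w$, which coincides with your rightmost letter $A$ only when $A$ is unique --- but that is precisely the case your auxiliary fact excludes from annihilation, so the argument stands). The trade-off is clear: the paper's induction is shorter and preserves the hypotheses essentially for free, while yours costs a three-way case split on $s(x_1)$ but extracts slightly more than the lemma asserts, namely that the surviving $\beta$-letter can be taken to the right of every $\alpha$-letter in any reduced form of $x\circ y$.
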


\begin{proof}
We proceed by induction on $n$. If $n=1$, then $x\circ y=x_1\circ \cdots \circ x_m\circ y_1$.  We must have $s(y_1)=\beta$ so that $x\circ y$ is clearly reduced by Remark~\ref{cor:concatenate}.
Suppose now that $n>1$ and the result is true for all words $y$ of length strictly less than $n$.

Clearly, the result is true if $(x_1\circ \cdots \circ x_m)\circ (y_1\circ \cdots y_n)$ is reduced. If not, by Remark~\ref{cor:concatenate}, there exists some $1\leq k\leq m, 1\leq j\leq n $ such that $s(x_k)=s(y_j)$ and $(s(y_j), s(z))\in E$
for any  $z=x_h$ or $z=y_t$ with $k+1\leq h\leq m, 1\leq t\leq j-1$.  Let $y'=y_1\circ \cdots \circ y_{j-1}\circ y_{j+1}\circ \cdots \circ y_n$; notice that $y$ shuffles to
$y_j\circ y'$, so that  $y'$ is a reduced form. Further, let $p=x_1\circ \cdots \circ x_{k-1}\circ x_ky_j\circ x_{k+1}\circ\cdots \circ x_m$. Let  $x'=p$
if $x_ky_j$ is not an identity
and otherwise let  $x'=x_1\circ \cdots \circ x_{k-1}\circ x_{k+1}\circ\cdots \circ x_m$; in either case,
$x'$ is a reduced form.   Now consider $x'\circ y'$. Clearly, $[x\circ y]=[x'\circ y']$. As $\alpha\not \in s(y)$, we have $\alpha\notin s(y')$ and $s(x_k)=s(y_j)\neq \alpha$,  so that $k\neq m$. Moreover, as $s(x_m)=\alpha$ and $(\beta, \alpha)\not \in E$,  we have $s(y_j)\neq \beta$, and so $\beta\in s(y')$. By induction, $\beta$ is in the support of any reduced form of $x'\circ y'$, and hence in that of $x\circ y$.
\end{proof}

\begin{lemma}\label{absorbed-1}
Let $\alpha\in V$ and $x=x_1\circ \cdots \circ x_n\in X^*$ be $\alpha$-amenable with $s(x_n)\neq \alpha$. Then the word $x'=x_2\circ \cdots \circ x_{n-1}$ is $\alpha$-absorbing.
\end{lemma}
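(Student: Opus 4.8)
The plan is to induct on $n$, and the two trivial cases are immediate: if $n\le 2$ then $x'=\epsilon$, which is $\alpha$-absorbing; and if $\alpha\notin s(x')$ then, since reduction steps and shuffles never enlarge the support, no reduced form of $x'$ can contain $\alpha$, so again $x'$ is $\alpha$-absorbing. I may therefore assume $n\ge 3$ and $\alpha\in s(x')$, and argue by contradiction. Using Corollary~\ref{cor:starter} I first replace $x'$ by a reduced form $y=y_1\circ\cdots\circ y_r$, so that $x_1\circ y\circ x_n$ is still $\alpha$-amenable; as all reduced forms of $x'$ share the support of $y$, it suffices to derive a contradiction from the assumption $\alpha\in s(y)$.

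Let $y_j$ be the rightmost letter of $y$ with $s(y_j)=\alpha$. Applying the amenability condition to the $\alpha$-letter $y_j$ of $x_1\circ y\circ x_n$, the tail $y_j\circ\cdots\circ y_r\circ x_n$ is not $\alpha$-good; since a single letter of support $\alpha$ is $\alpha$-good, Lemma~\ref{i-good}(ii) shows that $y_{j+1}\circ\cdots\circ y_r\circ x_n$ is not $\alpha$-good. Here the hypothesis $s(x_n)\neq\alpha$ is crucial: together with the choice of $y_j$ as the \emph{rightmost} $\alpha$-letter it guarantees that $y_{j+1}\circ\cdots\circ y_r\circ x_n$ contains no $\alpha$ at all, so its failure of $\alpha$-goodness must be witnessed by some $\beta\neq\alpha$ with $(\alpha,\beta)\notin E$ in the support of its reduced form; that is, by a genuine bad letter lying strictly to the right of $y_j$. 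Moreover the letter $y_j$ itself cannot vanish in the reduction of its $\alpha$-free suffix, so a copy of $\alpha$ persists: I now have a surviving $\alpha$ followed, further to the right, by a bad $\beta$ that does not commute with it.

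The contradiction I am after is that some bad letter must survive in a reduced form of $x=x_1\circ y\circ x_n$, contradicting that $x$ is $\alpha$-good (which is part of $\alpha$-amenability). The underlying mechanism is a \emph{blocking} principle: because $(\alpha,\beta)\notin E$, no $\beta$-letter lying to the left of the surviving $\alpha$ at $y_j$ can be shuffled past it to annihilate the bad $\beta$ on the right. The clean way to make this precise is via Lemma~\ref{key-1}. Take the rightmost bad letter of $x$; by the previous paragraph it lies to the right of $y_j$, so everything to its right has support neither $\alpha$ (we are to the right of the last $\alpha$-letter and $s(x_n)\neq\alpha$) nor $\beta$ (letters to the right are good, hence adjacent to $\alpha$, whereas $\beta$ is not). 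Thus its tail is clean, and provided this letter is non-left-invertible, Lemma~\ref{key-1} forces $\beta$ into the support of every reduced form of $x$, the desired contradiction.

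The main obstacle is exactly the case in which this operative bad letter is left-invertible, where Lemma~\ref{key-1} does not apply directly: an invertible $\beta$ could a priori be cancelled by a partner, and one must show the intervening $\alpha$ still forbids this. I expect to dispatch it precisely as in the proofs of Lemmas~\ref{key-1} and~\ref{haha}, by folding it into the induction on $n$ with a case analysis, via Remark~\ref{cor:concatenate}, on whether the relevant concatenation of reduced words is already reduced: in the non-reduced case one glues the invertible bad letter to a neighbouring letter, strictly shortening the word while preserving $\alpha$-amenability by Lemma~\ref{starter} and retaining the property $s(x_n)\neq\alpha$, and then applies the inductive hypothesis. Verifying that these reduction steps genuinely preserve both $\alpha$-amenability and the boundary hypothesis on $x_n$ is the part requiring the most care.
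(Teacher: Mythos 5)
Your skeleton coincides with the paper's up to the point where the contradiction must be extracted: dispose of $n\le 2$, reduce the middle via Corollary~\ref{cor:starter}, locate the rightmost $\alpha$-letter $y_j$ of $y$, and use amenability together with Lemma~\ref{i-good} to produce some $\beta\neq\alpha$ with $(\alpha,\beta)\notin E$ in the support of the reduced form of the $\alpha$-free suffix. The gap is in how you close. The paper's blocking tool is Lemma~\ref{haha}, which carries \emph{no} invertibility hypothesis: $\beta$ survives because the left factor is a \emph{reduced} word literally ending in a letter of support $\alpha$. The price of using it is that one must first control where $x_1$ lands after reduction, and that case split (is $x_1\circ y$ reduced; if not, is $s(x_1)=\alpha$; if not, does $x_1$ glue before or after $y_{l'}$) is the bulk of the paper's proof. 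You instead reach for Lemma~\ref{key-1}, which imports a non-left-invertibility hypothesis your situation does not supply, and you defer the left-invertible case to a sketch that does not go through as written: you propose to glue letters and ``preserve $\alpha$-amenability by Lemma~\ref{starter}'', but Lemma~\ref{starter} only licenses reduction steps applied to the inner factor $x_2\circ\cdots\circ x_{n-1}$. Once the middle is already reduced, any further gluing necessarily involves $x_1$ or $x_n$, and amenability is not stable under such moves (the paper's example after Definition~\ref{def of condition p} shows it is not even a shuffle invariant), so the inductive step of your patch is unjustified. Since the left-invertible case is generic rather than exceptional (take any vertex monoid with nontrivial units, e.g.\ a group), this is a genuine hole, not a loose end.

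The second omission is the interaction of $x_1$ with $y$. Your claim that ``a copy of $\alpha$ persists'' as a blocker at $y_j$ is established only for the reduction of $y_j$ against its own suffix; in the full word, if $s(x_1)=\alpha$ and $y_j$ is the unique $\alpha$-letter of $y$, then $x_1$ may glue to $y_j$ with $x_1y_j$ an identity, annihilating the blocker. In that situation the contradiction is not obtained from a surviving $\beta$ behind a surviving $\alpha$ at all: the paper's Case~(2) splits the word at the first $\alpha$-letter and applies Lemma~\ref{i-good} directly ($\alpha$-good prefix, non-$\alpha$-good suffix, hence non-$\alpha$-good word). Likewise, when $s(x_1)\neq\alpha$ and $x_1$ glues to a letter lying to the right of $y_{l'}$ (Case~(3)(b)), the suffix you analysed changes and the existence of the bad $\beta$ must be re-derived for the modified word before Lemma~\ref{haha} can be invoked. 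None of this refutes your overall strategy, but these cases are the actual content of the proof, and substituting Lemma~\ref{key-1} for Lemma~\ref{haha} adds an obstruction rather than removing one.
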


\begin{proof}  If $n\leq 2$, then we may take $x_2\circ \cdots \circ x_{n-1}$ as $\epsilon$, which is certainly  $\alpha$-absorbing.  Assume now that $n>2$.  Let $y=y_1\circ \cdots \circ y_m$ be a reduced form of $x'$. By Corollary \ref{cor:starter}, $x_1\circ y\circ x_{n}$ is $\alpha$-amenable. We claim that  $x'$ is $\alpha$-absorbing. To prove this, we assume the contrary, so that $T\neq\emptyset $ where
\[T=\{ k: 1\leq k\leq m, s(y_k)=\alpha\}.\]
Let $l$ and $l'$ be the least and greatest elements of $T$, respectively. Since $x_1\circ y \circ x_{n}$ is $\alpha$-amenable, we  have that for any $k\in T$ the word
$y_k\circ y_{k+1}\circ\cdots\circ y_m\circ x_n$ is not $\alpha$-good.
We consider the following  cases.

Case (1): {\em $x_1\circ y$ is a reduced form.} It follows that  $x_1\circ y_1\circ \cdots \circ y_{l'}$ is also a reduced form. Let $z$ be a reduced form of $y_{l'+1}\circ \cdots \circ y_m\circ x_n$. As commented, $\alpha$-amenability gives us that $y_{l'}\circ y_{l'+1}\circ \cdots \circ y_m\circ x_{n}$ is not $\alpha$-good.   We deduce $y_{l'+1}\circ \cdots \circ y_m\circ x_n$ is not $\alpha$-good by Corollary \ref{connected i-good}, hence neither is $z$.  Thus there exist $\beta\in s(z)$ such that $\beta\neq \alpha$ and $(\beta, \alpha)\not \in E$. Further,  as $s(x_n)\neq \alpha$ and by the minimality of $l'$ in $T$, we have $\alpha\not \in s(y_{l'+1}\circ \cdots \circ y_m\circ x_n)$ and so $\alpha\not \in s(z)$. By Lemma \ref{haha}, $\beta$ is in the support of the reduced form of $x_1\circ y_1\circ \cdots \circ y_{l'}\circ z$, but $(\beta, \alpha)\not \in E$, implying that $x_1\circ y_1\circ \cdots \circ y_{l'}\circ z$ is not $\alpha$-good, and hence neither is $x$, a contradiction.

Case (2): {\em $x_1\circ y$ is not a reduced form and $s(x_1)=\alpha$.}  By Remark \ref{cor:concatenate}, $(\beta, \alpha)\in E$ for all
 $\beta\in s(y_1\circ \cdots \circ y_{l-1})$, and so $$[x]=
 [x_1\circ y\circ x_n]=[x_1\circ y_1\circ \cdots \circ y_m\circ x_{n}]=[y_1\circ \cdots \circ y_{l-1}\circ x_1y_{l}\circ y_{l+1}\circ \cdots \circ y_m\circ x_n].$$ Notice that $y_1\circ \cdots \circ y_{l-1}\circ x_1y_{l}$ is $\alpha$-good. By $\alpha$-amenability $y_{l}\circ y_{l+1}\circ  \cdots \circ y_m\circ x_n$ is not $\alpha$-good. As  $s(y_l)=\alpha$, we deduce that $y_{l+1}\circ  \cdots \circ y_m\circ x_n$ is not $\alpha$-good by Corollary \ref{connected i-good}, so that $y_1\circ \cdots y_{l-1}\circ x_1y_{l}\circ y_{l+1}\circ \cdots \circ y_m\circ x_n$ is not $\alpha$-good by  Lemma \ref{i-good}, a contradiction.

Case (3): {\em $x_1\circ y$ is not a reduced form and $s(x_1)\neq \alpha$.}  Then, by Remark \ref{cor:concatenate}, there exists some $1\leq j\leq m$, $j\notin T$,  such that $s(y_j)=s(x_1)$ and $(s(x_1), s(y_{k}))\in E$ for all $1\leq k\leq j-1$.  We consider two sub-cases.

\ \ Case (3)(a): $j<l'$. Let $w=y_1\circ \cdots \circ y_{j-1} \circ x_1y_j\circ y_{j+1}\circ \cdots \circ y_{l'}$. Let $w'=w$   if $x_1y_j$ is not an identity, and otherwise let  $w'=y_1\circ \cdots \circ y_{j-1}\circ y_{j+1}\circ \cdots \circ y_{l'}$, so that $w'$ is a reduced form of $w$.  Let $z$ be a reduced form of $y_{l'+1}\circ \cdots \circ y_m\circ x_n$.
Then $[w'\circ z]=[x]$.   Since $y_{l'}\circ y_{l'+1}\circ \cdots \circ y_m\circ x_{n}$ is not $\alpha$-good, we deduce $y_{l'+1}\circ \cdots \circ y_m\circ x_n$ is not $\alpha$-good by Corollary \ref{connected i-good}, so that neither is $z$. Hence there exists $\beta\in s(z)$ such that
$\beta\neq \alpha$ and  $(\beta, \alpha)\not \in E$. Further, as $s(x_n)\neq \alpha$, we have $\alpha\not \in s(y_{l'+1}\circ \cdots \circ y_m\circ x_n)$ and so $\alpha\not \in s(z)$. It then follows from Lemma \ref{haha} that $\beta$ is in the support of the reduced form of $w'\circ z$.  But, $(\beta, \alpha)\not \in E$, implying that  $w'\circ z$
and hence $x$ is not $\alpha$-good,  a contradiction.

\ \ Case (3)(b): $j>l'$. Notice  first that
$[y_{l'+1}\circ \cdots \circ y_{j-1}\circ x_1y_j\circ y_{j+1}\circ \cdots \circ y_m\circ x_n]=[w]$ where $w=x_1y_j\circ y_{l'+1}\circ\cdots\circ y_{j-1}\circ y_{j+1}\circ \cdots \circ y_m\circ x_n$. We claim that $w$ is not $\alpha$-good. As $s(y_j)=s(x_1)$ and $(s(x_1), s(y_{l'}))\in E$, we have $(s(y_j), \alpha)\in E$, so that $y_j$ is $\alpha$-good.
 By   $\alpha$-amenability, $y_{l'}\circ y_{l'+1}\circ \cdots \circ y_m\circ x_n$ is not $\alpha$-good and so  $y_{l'+1}\circ \cdots \circ y_m\circ x_n$ is not $\alpha$-good by Corollary~\ref{connected i-good}. As $[y_{l'+1}\circ \cdots \circ y_{j-1}\circ y_{j}\circ y_{j+1}\circ \cdots \circ y_m\circ x_n]=[w']$ where
 $w'=y_j\circ y_{l'+1}\circ \cdots \circ y_{j-1}\circ y_{j+1}\circ \cdots \circ y_m\circ x_n$ we deduce that $w'$ is  not $\alpha$-good and so  $y_{l'+1}\circ \cdots \circ y_{j-1}\circ y_{j+1}\circ \cdots \circ y_m\circ x_n$ is not $\alpha$-good by Lemma \ref{i-good}; similarly,  as $x_1y_j$ is $\alpha$-good, we deduce $x_1 y_j\circ y_{l'+1}\circ \cdots \circ y_{j-1}\circ y_{j+1}\circ \cdots y_m\circ x_n$ is not $\alpha$-good. Let $z$ be a reduced form of  $x_1 y_j\circ y_{l'+1}\circ \cdots \circ y_{j-1}\circ y_{j+1}\circ \cdots y_m\circ x_n$ and notice $\alpha\notin s(z)$. As $z$ is not $\alpha$-good, there is $\beta\in s(z)$ such that
$\beta\neq \alpha$ and $(\beta, \alpha)\not \in E$.  Consider the word $v=y_1\circ \cdots \circ y_{l'}\circ z$. Clearly $[
x]=[v]$.  By Lemma \ref{haha}, $\beta$ is in the support of the reduced form of $v$ and hence that of $x$. But $(\beta, \alpha)\not \in E$, contradicting $x$  being $\alpha$-good.

We conclude that $x_2\circ\cdots\circ x_{n-1}$ is $\alpha$-absorbing, thus completing the proof.
\end{proof}

\begin{Cor}\label{transfer}
Let $\alpha\in V$ and $x=x_1\circ \cdots \circ x_n\in X^*$ be $\alpha$-amenable with $s(x_n)=\alpha$, and let   $\beta\in V$.
Then   $x_1\circ \cdots \circ x_{n-1}\circ 1_\beta$  is also $\alpha$-amenable.
\end{Cor}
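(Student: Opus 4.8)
The plan is to exploit two facts: that the inner factor $x_2\circ\cdots\circ x_{n-1}$ is left completely unchanged when the final letter $x_n$ (which has support $\alpha$) is replaced by $1_\beta$, and that both $x_n$ and $1_\beta$ are $\alpha$-good single letters that may be stripped from, or attached to, the end of a word without altering its $\alpha$-goodness, courtesy of Lemma~\ref{i-good}(i). Write $y=x_1\circ\cdots\circ x_{n-1}\circ 1_\beta$, so that $y$ has the same length $n$ as $x$ and $y_i=x_i$ for $1\leq i\leq n-1$. First I would check that $y$ is $\alpha$-good. Since $s(x_n)=\alpha$, the single letter $x_n$ is $\alpha$-good, so Lemma~\ref{i-good}(i) applied to $x$ shows that $x_1\circ\cdots\circ x_{n-1}$ is $\alpha$-good (as $x$ is). Because $1_\beta\in I$ is also $\alpha$-good, by the remarks following Definition~\ref{defn of i-good}, a second application of Lemma~\ref{i-good}(i) yields that $y$ is $\alpha$-good.

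It then remains to verify the amenability clause of Definition~\ref{def of condition p} for $y$. If $n\leq 2$ this is automatic by clause (i). Suppose $n>2$. The inner factor $y_2\circ\cdots\circ y_{n-1}$ equals $x_2\circ\cdots\circ x_{n-1}$ verbatim, so in particular $\alpha\in s(y_2\circ\cdots\circ y_{n-1})$ if and only if $\alpha\in s(x_2\circ\cdots\circ x_{n-1})$. In the case $\alpha\notin s(x_2\circ\cdots\circ x_{n-1})$ clause (ii) holds for $y$ at once. In the remaining case, fix any $k$ with $2\leq k\leq n-1$ and $s(x_k)=\alpha$; I must show that $x_k\circ\cdots\circ x_{n-1}\circ 1_\beta$ is not $\alpha$-good. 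By the $\alpha$-amenability of $x$, the word $x_k\circ\cdots\circ x_{n-1}\circ x_n$ is not $\alpha$-good; stripping the $\alpha$-good suffix $x_n$ via Lemma~\ref{i-good}(i) shows $x_k\circ\cdots\circ x_{n-1}$ is not $\alpha$-good; attaching the $\alpha$-good suffix $1_\beta$, again by Lemma~\ref{i-good}(i), then shows $x_k\circ\cdots\circ x_{n-1}\circ 1_\beta$ is not $\alpha$-good, which is exactly what clause (ii) requires.

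I do not anticipate a genuine obstacle: the statement is essentially bookkeeping resting on the single observation that $x_n$ and $1_\beta$ are interchangeable $\alpha$-good ``caps'' at the end of the word. The only care needed is to keep the indices straight when invoking Lemma~\ref{i-good}, and to note that the substitution leaves every factor $x_k\circ\cdots\circ x_{n-1}$ (for $2\leq k\leq n-1$) untouched, so that the whole verification reduces to the two one-letter applications of Lemma~\ref{i-good}(i) described above.
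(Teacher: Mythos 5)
Your proof is correct and follows essentially the same route as the paper: establish $\alpha$-goodness of $x_1\circ\cdots\circ x_{n-1}\circ 1_\beta$ by noting that $x_n$ and $1_\beta$ are both $\alpha$-good end-letters that can be stripped and attached without changing $\alpha$-goodness, and then verify clause (ii) of Definition~\ref{def of condition p} by the same stripping/attaching applied to each suffix $x_k\circ\cdots\circ x_n$ that amenability declares not $\alpha$-good. The only cosmetic difference is that you invoke Lemma~\ref{i-good}(i) directly where the paper cites Corollary~\ref{connected i-good}; both rest on the same fact.
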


\begin{proof}
Certainly $1_\beta$ is  $\alpha$-good, as its unique reduced form is $\epsilon$.
Since $s(x_n)=\alpha$ and  $x$ is $\alpha$-good, two applications of Lemma \ref{i-good}
give that  $x_1\circ \cdots \circ x_{n-1}\circ 1_\beta$ is $\alpha$-good.
Suppose that $n\geq 3$ and $s(x_k)=\alpha$ where $2\leq k\leq n-1$. By $\alpha$-amenability, $x_k\circ \cdots \circ x_n$ is not $\alpha$-good, but as $x_n$ is $\alpha$-good,
two applications of Lemma \ref{i-good} give that  $x_k\circ \cdots \circ x_{n-1}\circ 1_\beta$ is not $\alpha$-good. Therefore  $x_1\circ \cdots \circ x_{n-1}\circ 1_\beta$ is $\alpha$-amenable.
\end{proof}

We have been working towards the following:

\begin{Prop}\label{absorbed}
Let $\alpha\in V$ and $x=x_1\circ \cdots \circ x_n\in X^*$ be $\alpha$-amenable. Then the factor $x_2\circ \cdots \circ x_{n-1}$ is $\alpha$-absorbing.
\end{Prop}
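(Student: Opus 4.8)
The plan is to derive the statement from Lemma~\ref{absorbed-1}, which already proves exactly this conclusion under the additional hypothesis that $s(x_n)\neq\alpha$, combined with the padding device supplied by Corollary~\ref{transfer}. I would split into two cases according to the support of the final letter $x_n$. If $s(x_n)\neq\alpha$, then Lemma~\ref{absorbed-1} applies verbatim and immediately gives that $x_2\circ\cdots\circ x_{n-1}$ is $\alpha$-absorbing; nothing further is needed.

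The remaining case is $s(x_n)=\alpha$, and here the idea is to append a harmless identity letter whose support differs from $\alpha$, thereby landing in the situation of Lemma~\ref{absorbed-1} without disturbing the inner factor. Since $|V|\geq 2$, I would choose $\beta\in V$ with $\beta\neq\alpha$. By Corollary~\ref{transfer}, the word $y=x_1\circ\cdots\circ x_{n-1}\circ 1_\beta$ is again $\alpha$-amenable. Writing $y=y_1\circ\cdots\circ y_n$ with $y_i=x_i$ for $1\leq i\leq n-1$ and $y_n=1_\beta$, the inner factor of $y$ is unchanged, namely $y_2\circ\cdots\circ y_{n-1}=x_2\circ\cdots\circ x_{n-1}$, while its final letter is now $1_\beta$ with $s(1_\beta)=\beta\neq\alpha$. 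Hence Lemma~\ref{absorbed-1} applies to $y$ and yields precisely that $x_2\circ\cdots\circ x_{n-1}$ is $\alpha$-absorbing, as required.

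The only genuine content beyond quoting the two preparatory results is the recognition that the case $s(x_n)=\alpha$ reduces to the case $s(x_n)\neq\alpha$ by this appending trick, so I expect no real obstacle once those results are in hand. The device works precisely because $1_\beta$ reduces to $\epsilon$ (so it introduces neither $\alpha$ nor any other vertex into the support of any reduced form of the inner factor, leaving $\alpha$-absorbance untouched) while nominally carrying support $\beta\neq\alpha$ (so that the final-letter hypothesis of Lemma~\ref{absorbed-1} is met), and Corollary~\ref{transfer} guarantees that $\alpha$-amenability survives the modification. The small-$n$ cases require no special argument: when $n\leq 2$ the inner factor is $\epsilon$, which is $\alpha$-absorbing by definition.
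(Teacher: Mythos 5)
Your proposal is correct and follows exactly the paper's own argument: the case $s(x_n)\neq\alpha$ is Lemma~\ref{absorbed-1} verbatim, and the case $s(x_n)=\alpha$ is handled by appending $1_\beta$ with $\beta\neq\alpha$ (using $|V|\geq 2$), invoking Corollary~\ref{transfer} to preserve $\alpha$-amenability, and then applying Lemma~\ref{absorbed-1} to the modified word, whose inner factor is unchanged. No gaps.
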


\begin{proof}
The result is true when $s(x_n)\neq \alpha$, by  Lemma \ref{absorbed-1}. Suppose that $s(x_n)= \alpha$. By Corollary~ \ref{transfer}, $x_1\circ \cdots \circ x_{n-1}\circ 1_\beta$ is $\alpha$-amenable, for any $\beta\in V$. Since $|V|\geq 2$, taking  $\beta\neq \alpha$ Lemma \ref{absorbed-1} tells us  that $x_2\circ \cdots \circ x_{n-1}$ is $\alpha$-absorbing.
\end{proof}

\begin{Cor}\label{move}
Let $\alpha\in V$ and $x=x_1\circ \cdots \circ x_n\in X^*$  be $\alpha$-amenable.

{\rm(i)} If $s(x_1)=s(x_n)=\alpha$, then for all $\beta$ in the support of the reduced form of $x_2\circ \cdots \circ x_{n-1}$ we have $\beta\neq \alpha$ and $(\alpha, \beta)\in E$.

{\rm(ii)} If $s(x_1)=\alpha, s(x_n)\neq \alpha$, then for all $\beta$ in the support of the reduced form of $x_2\circ \cdots \circ x_{n}$ we have $\beta\neq \alpha$ and $(\alpha, \beta)\in E$.
\end{Cor}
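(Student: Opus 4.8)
The plan is to recognise that the conclusion of each part says exactly that the relevant inner factor is simultaneously $\alpha$-\emph{good} and $\alpha$-\emph{absorbing}. Indeed, if a word $w$ is $\alpha$-absorbing then no $\beta$ in the support of a reduced form of $w$ can equal $\alpha$, and if $w$ is moreover $\alpha$-good then every such $\beta$ satisfies $(\alpha,\beta)\in E$. So for (i) it suffices to prove that $x_2\circ\cdots\circ x_{n-1}$ is both $\alpha$-good and $\alpha$-absorbing, and for (ii) that $x_2\circ\cdots\circ x_n$ is. The absorbing halves are nearly free: Proposition~\ref{absorbed} already supplies that $x_2\circ\cdots\circ x_{n-1}$ is $\alpha$-absorbing (since $x$ is $\alpha$-amenable), so the only real content left is to transfer $\alpha$-goodness down to the inner factor and, for (ii), to extend absorbency past the final letter $x_n$.

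For the goodness I would peel off the outer letters with Lemma~\ref{i-good}. As $s(x_1)=\alpha$, the one-letter word $x_1$ has support contained in $\{\alpha\}$, hence is $\alpha$-good; applying Lemma~\ref{i-good}(ii) with $k=2$, together with the fact that $x$ itself is $\alpha$-good (being $\alpha$-amenable), yields that $x_2\circ\cdots\circ x_n$ is $\alpha$-good. This already gives the goodness required for (ii). For (i) we additionally have $s(x_n)=\alpha$, so $x_n$ is $\alpha$-good, and one further application of Lemma~\ref{i-good}(i) to the word $x_2\circ\cdots\circ x_n$ strips off its last letter to give that $x_2\circ\cdots\circ x_{n-1}$ is $\alpha$-good. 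Combined with the absorbency from Proposition~\ref{absorbed}, this completes (i).

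It remains, for (ii), to upgrade the $\alpha$-absorbency of $x_2\circ\cdots\circ x_{n-1}$ to that of $x_2\circ\cdots\circ x_n$. Here I would take a reduced form $w'$ of $x_2\circ\cdots\circ x_{n-1}$, which by absorbency omits $\alpha$ from its support, and consider $w'\circ x_n$, noting $[w'\circ x_n]=[x_2\circ\cdots\circ x_n]$. Since reduction steps and shuffles never introduce a support element that was not already present, and $s(x_n)\neq\alpha$, the support of any reduced form of $w'\circ x_n$ is contained in $s(w')\cup\{s(x_n)\}$ and so cannot contain $\alpha$. By Proposition~\ref{shuffle} all reduced forms of $x_2\circ\cdots\circ x_n$ share this support, so $x_2\circ\cdots\circ x_n$ is $\alpha$-absorbing; together with the goodness above this gives (ii).

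I expect no serious obstacle, since the genuinely hard combinatorics was already carried out in Proposition~\ref{absorbed} and this corollary is essentially bookkeeping. The one point needing care is the mismatch in (ii) between the inner factor $x_2\circ\cdots\circ x_n$ named in the statement and the factor $x_2\circ\cdots\circ x_{n-1}$ delivered by Proposition~\ref{absorbed}; this is precisely where the short appending argument of the previous paragraph is used, and is the reason the hypothesis $s(x_n)\neq\alpha$ enters. Degenerate small-$n$ cases, such as an empty inner factor, are handled automatically, since $\epsilon$ is both $\alpha$-good and $\alpha$-absorbing.
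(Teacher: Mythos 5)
Your proof is correct, and it reaches the conclusion by a slightly different route from the paper's. Both arguments get the $\beta\neq\alpha$ half from Proposition~\ref{absorbed} (in (ii) extending past $x_n$ exactly as you do, by noting that reduction steps and shuffles never enlarge support, so $s(x_n)\neq\alpha$ keeps $\alpha$ out). The difference lies in how the edge condition $(\alpha,\beta)\in E$ is obtained. The paper works outward: it takes the reduced form $y$ of the inner factor and uses Corollary~\ref{product of reduced} and its dual to show that $s(y)$ survives into the support of a reduced form of the whole word $x$, then invokes the $\alpha$-goodness of $x$ directly. You instead work inward: you observe that the desired conclusion is precisely the conjunction of $\alpha$-absorbency and $\alpha$-goodness of the inner factor, and you transfer $\alpha$-goodness from $x$ down to that factor by peeling off the outer letters with Lemma~\ref{i-good} (twice for (i), once for (ii)), each outer letter being $\alpha$-good because its support is $\{\alpha\}$. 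Since Lemma~\ref{i-good} is itself proved with the dual of Corollary~\ref{product of reduced}, the two arguments rest on the same machinery, but your packaging via Lemma~\ref{i-good} is a little cleaner and avoids re-tracing the support-survival argument by hand; the paper's version has the minor advantage of not needing the explicit reformulation of the conclusion as ``good and absorbing.'' All the degenerate cases ($n\leq 2$, empty inner factor, letters in $I$) are handled correctly in your write-up.
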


\begin{proof}  Clearly we may assume that $n>2$. Let $y$ be a reduced form of $x_2\circ \cdots \circ x_{n-1}$, so that  $\alpha\notin s(y)$ by Proposition \ref{absorbed}.

(i)    Clearly, the result is true when $y=\epsilon$, so we assume that $y\neq \epsilon$. Let $w$ be a reduced form of $x_1\circ y$. It  follows from Corollary \ref{product of reduced} that $s(y)\subseteq s(w)$. Further, by the dual of Corollary  \ref{product of reduced}, $s(y)$ is contained in  the support of the reduced form of $w\circ x_n$. As $x$ is $\alpha$-good, so are $x_1\circ y\circ x_n$ and $w\circ x_n$, implying $(\alpha,\beta)\in E$ for all $\beta\in s(y)$.

(ii)  Let $w$ be a reduced form such that
$[w]=[y\circ x_n]=[x_2\circ\cdots\circ x_n]$. Since $s(x_n)\neq \alpha$, we deduce that $\alpha\notin s(w)$. Let $v$ be a reduced form of  $x_1\circ w$.  Since $x$ is $\alpha$-good and $[v]=[x_1\circ w]=[
x]$, we have that $v$ is $\alpha$-good, so that $\beta=\alpha$ or $(\beta, \alpha)\in E$ for all $\beta\in s(v)$. Further, as $s(x_1)=\alpha$ but $\alpha\not \in s(w)$, we  have $s(w)\subseteq s(v)$  by Corollary \ref{product of reduced}, so that  $(\beta, \alpha)\in E$ for all $\beta\in s(w)$.
\end{proof}

In what follows we use  the foregoing analysis to allow us to factorise elements of $\mathscr{GP}$ in a way that will enable us to achieve the aim of this section. First, another definition.

\begin{defn}\label{def of N}
Let $x=x_1\circ \cdots \circ x_n\in X^*$  and $\alpha\in V$. We define a set $$N_\alpha(x)=\{k\in \{1, \cdots, n\}: s(x_k)=\alpha\mbox{ and }  x_k\circ \cdots \circ x_n \mbox{~is~} \alpha\mbox{-good}\}.$$

We will show that for a word $x$ as in Lemma~\ref{def of N} we can move the letters indexed by elements of $N_{\alpha}(x)$ to the right of $x$ (maintaining their order).  Where convenient, in situations where the enumeration of indices is particularly involved, and where there is no danger of ambiguity, we may identify $N_\alpha(x)$ with $\{ x_k:k\in N_\alpha(x)\}$.
\end{defn}

Notice that $N_\alpha(x)$ may be empty and,
in particular, $N_\alpha(\epsilon)=\emptyset$. Further,   $s(x_n)=\alpha$ if and only if $n\in N_\alpha(x)$. If $l, k\in N_\alpha(x)$ with $l<k$, there may exist some $l<j<k$ with $s(x_j)=\alpha$ such that $j\not \in N_\alpha(x)$. For example, suppose that  $n=6$,  $s(x_1)=s(x_3)=s(x_4)=s(x_6)=\alpha$, and $s(x_2)=s(x_5)=\beta$ where  $\alpha\neq \beta$, $(\alpha, \beta)\not \in E$, $x_3x_4=1_\alpha$, $x_2,x_5\notin I$ and $x_2x_5=1_\beta$. Then $N_\alpha(x)=\{1, 6\}$. This also provides an example of an $\alpha$-amenable word.

\begin{lemma}\label{pre}
Let $\alpha\in V$ and $x=x_1\circ \cdots \circ x_n\in X^*$ with $s(x_n)=\alpha$. Write $$N_\alpha(x)=\{l_1, \cdots, l_r: 1\leq l_1< \cdots < l_r= n\}.$$ Then $$[x]=[x'] [x_{l_1}\circ \cdots \circ x_{l_r}]$$
where $x'$ is the word obtained from $x_1\circ \cdots \circ x_n$   by deleting the letters $x_{l_1}, \cdots, x_{l_r}$. 

Further, if $z$ is a word obtained from $x$ by replacing $x_{l_1}, \cdots, x_{l_r}$
by   letters $z_{l_1}, \cdots, z_{l_r}\in M_\alpha$, respectively, we have \[[z]=[x'][z_{l_1}\circ \cdots \circ z_{l_r}].\]
\end{lemma}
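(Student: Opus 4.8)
The plan is to deduce the second assertion from the first by checking that the replacement leaves the combinatorial data of Lemma~\ref{pre} intact. Write $N_\alpha(x)=\{l_1,\dots,l_r\}$. Since $z$ agrees with $x$ off the positions $l_1,\dots,l_r$, and at those positions both $x_{l_j}$ and $z_{l_j}$ lie in $M_\alpha$, the two words have the same support sequence, $s(z_i)=s(x_i)$ for all $i$; in particular $s(z_n)=\alpha$, and deleting the common positions $l_1,\dots,l_r$ from two words that agree off them yields $z'=x'$. Hence, once we know $N_\alpha(z)=N_\alpha(x)=\{l_1,\dots,l_r\}$, applying the first assertion of the lemma to $z$ gives $[z]=[z'][z_{l_1}\circ\cdots\circ z_{l_r}]=[x'][z_{l_1}\circ\cdots\circ z_{l_r}]$, as required.

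Everything thus reduces to proving $N_\alpha(z)=N_\alpha(x)$. As the two words share a support sequence, it suffices to show that for each $\alpha$-position $k$ the suffixes $x_k\circ\cdots\circ x_n$ and $z_k\circ\cdots\circ z_n$ are simultaneously $\alpha$-good. By Definition~\ref{defn of i-good}, $\alpha$-goodness of a word $w$ depends only on $s(\overline{w})\setminus\{\alpha\}$ (where $\overline{w}$ is a reduced form), so it is enough to prove $s(\overline{x_k\circ\cdots\circ x_n})\setminus\{\alpha\}=s(\overline{z_k\circ\cdots\circ z_n})\setminus\{\alpha\}$ for every $k$. I will use the elementary fact that right–multiplying a word by a letter of $M_\alpha$ leaves the non-$\alpha$ part of the support of its reduced form unchanged: inspection of the reduction process (Lemma~\ref{product reduction}) shows that such a letter can only shuffle leftward and glue to, or cancel against, $\alpha$-letters, never creating or destroying a letter of support $\beta\neq\alpha$.

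I would then establish the required support identity by switching $x_{l_j}$ to $z_{l_j}$ one at a time, from $j=r$ down to $j=1$. The atomic step is: if $w=w_1\circ\cdots\circ w_n$ has $s(w_p)=\alpha$ and the tail $w_p\circ\cdots\circ w_n$ is $\alpha$-good, then replacing $w_p$ by any other element of $M_\alpha$ leaves $s(\overline{w_k\circ\cdots\circ w_n})\setminus\{\alpha\}$ unchanged for every $k$. Indeed $p\in N_\alpha(w_p\circ\cdots\circ w_n)$, so the first assertion of the lemma applied to the tail writes $[w_p\circ\cdots\circ w_n]=[D][m]$, where $D$ is the tail with all its $N_\alpha$-letters deleted and $m\in M_\alpha$. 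Crucially $w_p$ is among the deleted letters, so $D$ is unaffected by the replacement while the whole change is absorbed into the factor $[m]$. For $k\le p$ this gives $[w_k\circ\cdots\circ w_n]=[w_k\circ\cdots\circ w_{p-1}\circ D][m]$, and by the elementary fact the trailing $[m]$ contributes nothing to the non-$\alpha$ support; for $k>p$ the suffix is untouched. This proves the atomic step.

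Iterating, and noting that preserving the non-$\alpha$ support preserves $\alpha$-goodness, the hypothesis of the atomic step stays valid at each stage: when we reach position $l_j$ its tail still has the same non-$\alpha$ support as in $x$, hence is still $\alpha$-good because $l_j\in N_\alpha(x)$. We conclude $s(\overline{x_k\circ\cdots\circ x_n})\setminus\{\alpha\}=s(\overline{z_k\circ\cdots\circ z_n})\setminus\{\alpha\}$ for all $k$, whence $N_\alpha(z)=N_\alpha(x)$, and the proof closes as in the first paragraph. I expect the main obstacle to be exactly this invariance: $\alpha$-goodness is genuinely \emph{not} stable under arbitrary changes of an $\alpha$-letter (replacing a cancelling $1_\alpha$ by a non-identity can un-cancel a $\beta$ with $(\beta,\alpha)\notin E$), so the argument must exploit the defining feature of the $N_\alpha$-letters, namely that each sits atop an $\alpha$-good tail, to guarantee that altering it is inert modulo $M_\alpha$.
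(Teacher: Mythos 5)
Your proposal does not prove the lemma: it establishes only the ``Further'' clause, and does so by invoking the first clause as a black box. The first clause, $[x]=[x'][x_{l_1}\circ\cdots\circ x_{l_r}]$, is where essentially all of the work lies, and nothing in your argument addresses it. The paper proves it by showing that for consecutive indices $l_k<l_{k+1}$ in $N_\alpha(x)$ the factor $x_{l_k}\circ\cdots\circ x_{l_{k+1}}$ is $\alpha$-amenable (the key point being that any intermediate $\alpha$-letter $x_j$ with $j\notin N_\alpha(x)$ forces $x_j\circ\cdots\circ x_{l_{k+1}}$ to fail to be $\alpha$-good, via Lemma~\ref{i-good}), and then applying Corollary~\ref{move} to conclude that the reduced form $w_k$ of each inner segment $x_{l_k+1}\circ\cdots\circ x_{l_{k+1}-1}$ has support avoiding $\alpha$ and consisting entirely of vertices adjacent to $\alpha$ in $\Gamma$. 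That is what licenses shuffling the letters $x_{l_1},\ldots,x_{l_r}$ to the right past the $w_k$, which is the content of the first clause. Without some version of this argument (resting on Proposition~\ref{absorbed}) your proof has no starting point.

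Granting the first clause, your derivation of the replacement statement is essentially sound, but it is a genuinely different and much longer route than the paper's. The paper gets the second clause for free: the rewriting $[x]=[x_1\circ\cdots\circ x_{l_1}\circ w_1\circ x_{l_2}\circ\cdots\circ w_{r-1}\circ x_{l_r}]$ used for the first clause never alters the letters in positions $l_1,\ldots,l_r$, and since every vertex in each $s(w_k)$ is adjacent to $\alpha$, the identical shuffle applies with $z_{l_j}$ in place of $x_{l_j}$, giving $[z]=[y'\circ z_{l_1}\circ\cdots\circ z_{l_r}]=[x'][z_{l_1}\circ\cdots\circ z_{l_r}]$ at once; no comparison of $N_\alpha(z)$ with $N_\alpha(x)$ is needed. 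Your alternative --- proving $N_\alpha(z)=N_\alpha(x)$ by induction on single replacements, using the invariance of $s(\overline{w})\setminus\{\alpha\}$ under right multiplication by elements of $M_\alpha$ --- does work: the ``elementary fact'' is the dual of Lemma~\ref{product reduction}, and the unstated point that $N_\alpha$ of a tail is unchanged when its leading $\alpha$-letter is replaced follows from Corollary~\ref{connected i-good}. But note that your ``atomic step'' applies the first clause to a tail, so it needs that tail to end in a letter of $M_\alpha$; this holds in your application only because $l_r=n$, and should be said. In short: the second half is a correct, if roundabout, deduction; the first half is missing.
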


\begin{proof} Let $1\leq k\leq r-1$.

 Definition \ref{def of N}, and two applications of
 Lemma~\ref{i-good} give $x_{l_k}\circ \cdots \circ x_{l_{k+1}}$ is $\alpha$-good.
We now claim that   $x_{l_k}\circ \cdots \circ x_{l_{k+1}}$
is $\alpha$-amenable.

Clearly, $x_{l_k}\circ \cdots \circ x_{l_{k+1}}$ is $\alpha$-amenable if either $l_{k+1}=l_k+1$ or $l_{k+1}>l_k+1$ and  there exists no $l_k<j< l_{k+1}$ such that $s(x_j)=\alpha$. Suppose now that there exists  $l_k<j< l_{k+1}$ such that $s(x_j)=\alpha$.  Since $j\not \in N_\alpha(x)$, the word $x_{j}\circ \cdots \circ x_{n}$ is not $\alpha$-good. On the other hand, we know $x_{l_{k+1}+1}\circ \cdots \circ x_{n}$ is $\alpha$-good,  giving that $x_j\circ \cdots \circ x_{l_{k+1}}$ is not $\alpha$-good by Lemma \ref{i-good}, and hence $x_{l_k}\circ \cdots \circ x_{l_{k+1}}$ is $\alpha$-amenable.

For each  $k$ in the range above let $w_k$ be a reduced form of $x_{l_k+1}\circ \cdots \circ x_{l_{k+1}-1}$.  
By Corollary \ref{move}, since
$x_{l_k}\circ \cdots \circ x_{l_{k+1}}$
 is $\alpha$-amenable, for any $\beta\in s(w_k)$ we have
 $\beta\neq \alpha$ and $(\beta, \alpha)\in E$. Further,
 $$[x]=[x_1\circ \cdots \circ x_{l_r}]=[x_1\circ \cdots \circ x_{l_1}\circ w_1\circ x_{l_2}\circ \cdots \circ x_{l_{r-1}}\circ w_{r-1}\circ x_{l_r}],$$ so that
$$[x]=[y'\circ x_{l_1}\circ \cdots \circ x_{l_r}]=
[y'][ x_{l_1}\circ \cdots \circ x_{l_r}]=
[x'][x_{l_1}\circ \cdots \circ x_{l_r}]$$
where $y'$ is the word obtained form $x_1\circ \cdots \circ x_{l_1}\circ w_1\circ x_{l_2}\circ \cdots \circ x_{l_{r-1}}\circ w_{r-1}\circ x_{l_r}$ by deleting $x_{l_1}, \cdots, x_{l_r}$ and $x'$ is the word obtained from $x_1\circ \cdots \circ x_n$ by deleting $x_{l_1}, \cdots, x_{l_r}$.

Suppose now that $z$ is a word obtained from $x$ by replacing $x_{l_1}, \cdots, x_{l_r}$
by  letters $z_{l_1}, \cdots, z_{l_r}\in M_\alpha$, respectively.  Since $[y']=[x']$, we have
$$[z]= [y'\circ z_{l_1}\circ \cdots\circ z_{l_r}]=
[y'][z_{l_1}\circ \cdots\circ z_{l_r}]=[x'][ z_{l_1}\circ \cdots\circ z_{l_r}].$$
\end{proof}

 We  now remove the restriction that $s(x_n)=\alpha$ in Lemma~\ref{pre}.

\begin{lemma}\label{product}
Let  $\alpha\in V$ and $x=x_1\circ \cdots \circ x_n\in X^*$. Write $$N_\alpha(x)=\{l_1, \cdots, l_r: 1\leq l_1< \cdots < l_r\leq n\}.$$ Then $$[x]=[x'] [x_{l_1}\circ \cdots \circ x_{l_r}]$$
where $x'$ is the word obtained from  $x_1\circ \cdots \circ x_n$ by deleting the letters $x_{l_1}, \cdots, x_{l_r}$.

Further, if $z$ is a word obtained from $x$ by replacing $x_{l_1}, \cdots, x_{l_r}$
 by  letters $z_{l_1}, \cdots, z_{l_r}\in M_\alpha$, respectively, we have
 \[[z]=[x'][z_{l_1}\circ \cdots \circ z_{l_r}].\]
\end{lemma}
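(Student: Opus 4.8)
The plan is to reduce directly to Lemma~\ref{pre} by artificially forcing the last letter to have support $\alpha$. Set $\tilde{x} = x_1 \circ \cdots \circ x_n \circ 1_\alpha \in X^*$, a word of length $n+1$ whose final letter $1_\alpha$ satisfies $s(1_\alpha) = \alpha$. Since $1_\alpha = \epsilon$ in $\mathscr{GP}$ (relation $R_{id}$), we have $[\tilde{x}] = [x]$, and, crucially, the computation of $N_\alpha$ transfers cleanly.

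First I would compute $N_\alpha(\tilde{x})$. The new final index lies in $N_\alpha(\tilde{x})$, since $s(1_\alpha) = \alpha$ and the one-letter suffix $1_\alpha$ is $\alpha$-good (its reduced form being $\epsilon$). For $1 \leq k \leq n$ the suffix of $\tilde{x}$ starting at position $k$ is $x_k \circ \cdots \circ x_n \circ 1_\alpha$, which represents the same element of $\mathscr{GP}$ as $x_k \circ \cdots \circ x_n$; as $\alpha$-goodness depends only on the $\mathscr{GP}$-class (the remarks following Definition~\ref{defn of i-good}), these two suffixes are simultaneously $\alpha$-good. Hence $k \in N_\alpha(\tilde{x})$ if and only if $k \in N_\alpha(x)$, giving
$$N_\alpha(\tilde{x}) = \{l_1, \cdots, l_r, n+1\},$$
an enumeration whose greatest element equals the length of $\tilde{x}$, so that Lemma~\ref{pre} applies to $\tilde{x}$.

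I would then read off the conclusion. Lemma~\ref{pre} yields $[\tilde{x}] = [\tilde{x}']\,[x_{l_1} \circ \cdots \circ x_{l_r} \circ 1_\alpha]$, where $\tilde{x}'$ is $\tilde{x}$ with the letters indexed by $N_\alpha(\tilde{x})$ deleted, namely $x_{l_1}, \cdots, x_{l_r}$ together with the appended $1_\alpha$. Deleting that trailing $1_\alpha$ returns $x$, so $\tilde{x}'$ is exactly $x'$; moreover $[\tilde{x}] = [x]$ and $[x_{l_1} \circ \cdots \circ x_{l_r} \circ 1_\alpha] = [x_{l_1} \circ \cdots \circ x_{l_r}]$. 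Substituting gives the first identity $[x] = [x']\,[x_{l_1} \circ \cdots \circ x_{l_r}]$. For the refinement I would feed the same data into the second assertion of Lemma~\ref{pre}: replacing the letters of $\tilde{x}$ indexed by $N_\alpha(\tilde{x})$ by $z_{l_1}, \cdots, z_{l_r}, 1_\alpha \in M_\alpha$ produces precisely $z \circ 1_\alpha$, whence $[z \circ 1_\alpha] = [x']\,[z_{l_1} \circ \cdots \circ z_{l_r} \circ 1_\alpha]$; erasing the trailing $1_\alpha$ on both sides gives $[z] = [x']\,[z_{l_1} \circ \cdots \circ z_{l_r}]$.

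The argument is essentially bookkeeping once the append trick is in place; the one genuine point to check is that adjoining $1_\alpha$ leaves $N_\alpha$ unchanged apart from the new final index, which rests solely on $\alpha$-goodness being a $\mathscr{GP}$-class invariant. I anticipate no real obstacle, though one should confirm the degenerate case $N_\alpha(x) = \emptyset$: there $N_\alpha(\tilde{x}) = \{n+1\}$ and Lemma~\ref{pre} returns $[x] = [x][1_\alpha] = [x]$, matching the empty product on the right-hand side of the stated identity.
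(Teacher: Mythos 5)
Your proof is correct, but it takes a genuinely different and rather slicker route than the paper. The paper splits into cases: when $s(x_n)=\alpha$ it is exactly Lemma~\ref{pre}; otherwise it sets $p=x_1\circ\cdots\circ x_{l_r}$, checks that $N_\alpha(p)=\{l_1,\cdots,l_r\}$ so that Lemma~\ref{pre} applies to $p$, then separately verifies that $x_{l_r}\circ\cdots\circ x_n$ is $\alpha$-amenable and invokes Corollary~\ref{move} to show the reduced form $q$ of the tail $x_{l_r+1}\circ\cdots\circ x_n$ has support commuting with $\alpha$, which lets it shuffle $q$ past the extracted block $x_{l_1}\circ\cdots\circ x_{l_r}$. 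Your append-$1_\alpha$ trick collapses all of that into a single application of Lemma~\ref{pre}: the two things you need --- that $s(1_\alpha)=\alpha$ and that $N_\alpha(\tilde{x})=N_\alpha(x)\cup\{n+1\}$ --- both hold, the latter because $\alpha$-goodness is an invariant of the $R^\sharp$-class (the remarks after Definition~\ref{defn of i-good}) and the suffix $x_k\circ\cdots\circ x_n\circ 1_\alpha$ represents the same element as $x_k\circ\cdots\circ x_n$. Since Lemma~\ref{pre} and its proof place no restriction forbidding identity letters, and since $1_\alpha\in M_\alpha$ is a legitimate replacement letter for the second assertion, the bookkeeping you describe goes through, including the degenerate case $N_\alpha(x)=\emptyset$. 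Your device is in the same spirit as the paper's own Corollary~\ref{transfer}, which appends $1_\beta$ to transfer $\alpha$-amenability; what your approach buys is brevity and the avoidance of a second $\alpha$-amenability argument, at the cost of working with a non-reduced word, which is harmless here.
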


\begin{proof} We are done with the case where $s(x_n)=\alpha$, by Lemma \ref{pre}. Suppose now that $s(x_n)\neq \alpha$, and so $l_r\neq n$.
Let $p=x_1\circ \cdots \circ x_{l_r}$.
 Applications of  Lemma~\ref{i-good} that are now standard yield  $N_\alpha(p)=\{l_1, \cdots, l_r\}$.
By Lemma \ref{pre}, $[p]=[p'\circ x_{l_1}\circ \cdots \circ x_{l_r}]$ where $p'$ is the word obtained from $p$ by deleting letters $x_{l_1}, \cdots, x_{l_r}$. 
 We now have $$[x]=[p\circ x_{l_r+1}\circ \cdots \circ x_n]=[p'\circ x_{l_1}\circ \cdots \circ x_{l_r}\circ x_{l_r+1}\circ \cdots \circ x_n].$$

To show the required result, we now consider the $\alpha$-good word $x_{l_r}\circ \cdots \circ x_n$. We now claim that it is $\alpha$-amenable. Clearly,  we are done with the cases where either $n=l_r+1$ or $n>l_r+1$ and there exists no $l_r<j<n$ such that $s(x_j)=\alpha$. Suppose therefore that there exists $l_r<j<n$ such that $s(x_j)=\alpha$. As $j\not \in N_\alpha(x)$, we have that $x_j\circ \cdots \circ x_n$ is not $\alpha$-good, and so  $x_{l_r}\circ \cdots \circ x_n$ is $\alpha$-amenable. Let $q$ be a reduced form of $x_{l_r+1}\circ \cdots \circ x_n$. Since $x_{l_r}\circ \cdots \circ x_n$ is $\alpha$-amenable and $s(x_n)\neq \alpha$, we have that  $\beta\neq\alpha$ and $(\alpha, \beta)\in E$ for all  $\beta\in s(q)$ by Corollary \ref{move}. Therefore, $$[x]=[p'\circ x_{l_1}\circ \cdots \circ x_{l_r}\circ x_{l_r+1}\circ \cdots \circ x_n]=[p'\circ x_{l_1}\circ \cdots \circ x_{l_r}\circ q]=[p'\circ q \circ x_{l_1}\circ \cdots \circ x_{l_r}].$$  Since $[p'\circ q]=[x']$, we have $$[x]=[x'][ x_{l_1}\circ \cdots \circ x_{l_r}].$$

Suppose now that $z$ is a word obtained from $x$ by replacing $x_{l_1}, \cdots, x_{l_r}$
by  letters $z_{l_1}, \cdots, z_{l_r}$ from $M_\alpha$, respectively. Clearly,
$z=z'\circ x_{l_r+1}\circ \cdots \circ x_n $ where $z'$ is the word obtained from
$p$  by replacing $x_{l_1}, \cdots, x_{l_r}$
by  $z_{l_1}, \cdots, z_{l_r}\in M_\alpha$.
We have  shown that $N_\alpha(p)=\{l_1, \cdots, l_r\}$ and so from   Lemma~\ref{pre}
we have
$[z']=[p'\circ z_{l_1}\circ \cdots \circ z_{l_r}].$
Then
$$[z]=[z'\circ x_{l_r+1}\circ \cdots \circ x_n]=[p'\circ z_{l_1}\circ \cdots \circ z_{l_r}\circ q]=[p'\circ q\circ z_{l_1}\circ \cdots \circ z_{l_r}]=[x'][z_{l_1}\circ \cdots\circ z_{l_r}].$$

\end{proof}

 The reader should note that we are {\em not} claiming that the maps
$\overline{\phi}_\alpha$ and
$\overline{\psi}_\alpha$ in Lemma~\ref{phi} are morphisms.

\begin{Lem}\label{phi}
Let $\alpha\in V$. Then  the maps
$$\phi_\alpha: X^*\longrightarrow \mathscr{GP}\mbox{ and } \psi_\alpha: X^*\longrightarrow \mathscr{GP}$$ defined by  $$x\phi_\alpha=[x_{l_1}\circ \cdots \circ  x_{l_r}]\mbox{ and } x\psi_\alpha=[x_{m_1}\circ \cdots \circ x_{m_t}]$$ where  $x=x_1\circ \cdots \circ x_n$, with
\[N_\alpha(x)=\{l_1, \cdots, l_r\}, \,\,  1\leq l_1<\cdots <l_r\leq n\]
and
\[\{m_1, \cdots, m_t\}=\{1, \cdots, n\} \backslash N_\alpha(x)
,\,\,  1\leq m_1<\cdots <m_t\leq n,\]  induce maps  $$\overline{\phi}_\alpha:  \mathscr{GP}\longrightarrow \mathscr{GP}\mbox{ and } \overline{\psi}_\alpha: \mathscr{GP}\longrightarrow \mathscr{GP}$$ defined by $$[x]\overline{\phi}_\alpha=x\phi_\alpha\mbox{ and } [x]\overline{\psi}_\alpha=x\psi_\alpha.$$ Further,
$[x]=(x\psi_\alpha)(x\phi_\alpha)$.
\end{Lem}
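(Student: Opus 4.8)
The plan is to dispose of the factorisation identity immediately using Lemma~\ref{product}, and then to concentrate the real effort on showing that $\phi_\alpha$ and $\psi_\alpha$ are constant on $R^{\sharp}$-classes. For the factorisation, observe that for $x=x_1\circ\cdots\circ x_n$ with $N_\alpha(x)=\{l_1,\ldots,l_r\}$, the word $x'$ produced by Lemma~\ref{product} by deleting the letters $x_{l_1},\ldots,x_{l_r}$ is precisely $x_{m_1}\circ\cdots\circ x_{m_t}$, where $\{m_1,\ldots,m_t\}$ is the complement of $N_\alpha(x)$ listed in increasing order. Hence $[x']=x\psi_\alpha$ and $[x_{l_1}\circ\cdots\circ x_{l_r}]=x\phi_\alpha$, so Lemma~\ref{product} reads $[x]=(x\psi_\alpha)(x\phi_\alpha)$, which is the final assertion; no further work is needed for this part.

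For well-definedness I would reduce to elementary moves. By Lemma~\ref{lem:reduce} every $x$ reduces, via steps (id), (v) and shuffles (e), to a reduced word $\overline{x}$ with $[\overline{x}]=[x]$; and if $[x]=[y]$ then $\overline{x},\overline{y}$ are reduced representatives of the same element, hence shuffle equivalent by Proposition~\ref{shuffle}. Thus it suffices to prove that $x\phi_\alpha$ and $x\psi_\alpha$ are unchanged when a single step (id), (v) or (e) is applied to $x$; the chain $x\phi_\alpha=\overline{x}\phi_\alpha=\overline{y}\phi_\alpha=y\phi_\alpha$ (and likewise for $\psi_\alpha$) then follows. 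In each case the point is to control how $N_\alpha$ moves, and since each letter selected by $\phi_\alpha$ (respectively $\psi_\alpha$) sits in its subword in the original left-to-right order, I only need that (1) membership in $N_\alpha$ is preserved and (2) the relative order of the selected letters is preserved up to commutations that leave the value in $\mathscr{GP}$ unchanged; throughout, $\alpha$-goodness of suffixes is tracked by Lemma~\ref{i-good} and Corollary~\ref{connected i-good}.

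I would then carry out the three cases. For a shuffle (e) interchanging commuting adjacent letters $x_j,x_{j+1}$ with $(s(x_j),s(x_{j+1}))\in E$: if neither has support $\alpha$, all suffixes remain shuffle equivalent, so no membership changes, and the two letters commute inside whichever subword contains them; if exactly one, say the $\alpha$-letter $a$, is involved, then after the swap the suffix starting at $a$ gains or loses the commuting neighbour $t$, and since $(\alpha,s(t))\in E$ makes $t$ $\alpha$-good, Corollary~\ref{connected i-good} shows the $\alpha$-goodness of that suffix, hence whether $a\in N_\alpha$, is unaffected, while the order of $a$ among the $\alpha$-letters is unchanged because $t$ is not an $\alpha$-letter. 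For a merge (v) of $x_i,x_{i+1}\in M_\beta$: if $\beta=\alpha$ then, a support-$\alpha$ letter being $\alpha$-good, Lemma~\ref{i-good} forces $i\in N_\alpha\iff i+1\in N_\alpha$, so the two adjacent selected letters merge into the single factor $x_ix_{i+1}$ within the same subword, leaving its value unchanged; if $\beta\neq\alpha$ both lie in the $\psi_\alpha$-word and merge there. For a deletion (id) of $x_i\in I$: the letter contributes an identity to whichever subword contains it, and because $1_\beta$ reduces to $\epsilon$ its removal alters the $\alpha$-goodness of no suffix, so no other membership changes. I expect the shuffle case in which an $\alpha$-letter crosses a commuting neighbour to be the delicate step, precisely because the membership condition and the left-to-right ordering of the $\alpha$-letters must be argued simultaneously.
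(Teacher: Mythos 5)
Your proposal is correct and follows essentially the same route as the paper: the factorisation is read off directly from Lemma~\ref{product}, and well-definedness is reduced to checking invariance of $N_\alpha$ (and of the resulting subwords, up to commutations) under single elementary moves of types (id), (v) and (e), using Lemma~\ref{i-good} and Corollary~\ref{connected i-good}. The only difference is that the paper declares this case analysis ``routine'' while you carry it out explicitly; your three cases are correctly handled.
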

\begin{proof}
To show that $\overline{\phi}_\alpha$ and $\overline{\psi}_\alpha$ are well defined we need to show that  $R^\sharp\subseteq \ker \phi_\alpha$ and $R^\sharp\subseteq \ker \psi_\alpha$. Let $L$ be the binary relation on $X^*$ defined by $$L=\{(y\circ a\circ z,y\circ b\circ z): y,z\in X^*, (a, b)\in R\}.$$  Since $R^\sharp$ is the transitive closure of $L$, and $\ker \phi_\alpha$ and $\ker \phi_\alpha$ are, of course, equivalence relations, it suffices to show that $L\subseteq \ker \phi_\alpha$ and $L\subseteq \ker \psi_\alpha$. This can be seen in a routine manner by using Corollary~\ref{connected i-good} and considering $(a,b)\in R_{id}, R_{v}$ and $R_e$ in turn.

It follows from Lemma \ref{product} that $[x]=(x\psi_\alpha)(x\phi_\alpha)$.
\end{proof}

\begin{Prop}\label{step-2}
 Let $z=z_1 \circ \cdots \circ z_n\in X^*$
 such that $s(z)$ is a complete subgraph  such that $s(z_j)\neq s(z_k)$ for any
 $1\leq j<k\leq n$.  Suppose that $z_k\,\mathcal{R}^*\, z_k'$ in $M_{s(z_k)}$
 for $1\leq k\leq n$ and put $z'=z_1' \circ \cdots \circ z_n'.$ 
 Then $[z]\,\mathcal{R}^*\, [z']$ in  $\mathscr{GP}$.
\end{Prop}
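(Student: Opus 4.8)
The plan is to exploit the factorization maps $\overline{\phi}_\alpha$ and $\overline{\psi}_\alpha$ of Lemma~\ref{phi} in order to peel off a single vertex at a time and reduce the whole statement to the relation $\ars$ inside one vertex monoid. Since $\ars$ is an equivalence relation and the hypothesis $z_k\,\ars\,z_k'$ is symmetric, I would first reduce to the case where $z$ and $z'$ differ in exactly one coordinate: writing $z^{(i)}=z_1'\circ\cdots\circ z_i'\circ z_{i+1}\circ\cdots\circ z_n$, transitivity of $\ars$ means it is enough to prove $[z^{(i-1)}]\,\ars\,[z^{(i)}]$ for each $i$. Fixing such a step, I would set $\alpha=s(z_i)$ and $y=z^{(i-1)}$, $y'=z^{(i)}$, and note that $z_i$ is the unique letter of $y$ with support $\alpha$, that every other letter has support equal or adjacent to $\alpha$ (as $s(z)$ is complete), and hence that $y$ is $\alpha$-good; these are the only features of the surrounding letters the argument will use, so the presence of identities among the $z_j'$ causes no difficulty. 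By symmetry of $\ars$ it then suffices to prove one implication, so I would take words $p,q$ with $[p\circ y]=[q\circ y]$ and aim to deduce $[p\circ y']=[q\circ y']$.

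Next I would analyse $N_\alpha(p\circ y)$. Because $y$ is $\alpha$-good, Lemma~\ref{i-good} shows that appending $y$ to a suffix of $p$ does not affect its $\alpha$-goodness, so the relevant letters of $p$ contribute exactly $N_\alpha(p)$; and since $z_i$ is the unique $\alpha$-letter of $y$ and its tail $z_{i+1}\circ\cdots\circ z_n$ is $\alpha$-good, this yields $N_\alpha(p\circ y)=N_\alpha(p)\cup\{\,\text{position of }z_i\,\}$, with $z_i$ occupying the last such position. Consequently the word that $\phi_\alpha$ selects consists entirely of $M_\alpha$-letters and collapses, inside the vertex monoid, to a single element $a_0 z_i\in M_\alpha$, where $a_0$ is the product of the selected letters of $p$; thus $(p\circ y)\phi_\alpha=[a_0 z_i]$ and, by Lemma~\ref{phi}, $[p\circ y]=\big((p\circ y)\psi_\alpha\big)[a_0 z_i]$. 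The replacement clause of Lemma~\ref{product}, applied to the single letter $z_i$ (which may be replaced by any element of $M_\alpha$, the identity included), gives the parallel factorization $[p\circ y']=\big((p\circ y)\psi_\alpha\big)[a_0 z_i']$ with the \emph{same} $\psi_\alpha$-part. The identical computation for $q$ produces some $b_0\in M_\alpha$ with $[q\circ y]=\big((q\circ y)\psi_\alpha\big)[b_0 z_i]$ and $[q\circ y']=\big((q\circ y)\psi_\alpha\big)[b_0 z_i']$.

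Finally I would feed the hypothesis through these well-defined maps. Applying $\overline{\phi}_\alpha$ to $[p\circ y]=[q\circ y]$ yields $[a_0 z_i]=[b_0 z_i]$; since both sides lie in the image of the embedding $\iota_\alpha$ of Remark~\ref{rem:retract}, this forces $a_0 z_i=b_0 z_i$ in $M_\alpha$. The defining property of $z_i\,\ars\,z_i'$ in $M_\alpha$ is precisely that left multiplication cancels compatibly, so $a_0 z_i=b_0 z_i$ gives $a_0 z_i'=b_0 z_i'$, whence $[a_0 z_i']=[b_0 z_i']$ in $\mathscr{GP}$. Applying $\overline{\psi}_\alpha$ to the same hypothesis gives $(p\circ y)\psi_\alpha=(q\circ y)\psi_\alpha$. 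Combining the two, $[p\circ y']=\big((p\circ y)\psi_\alpha\big)[a_0 z_i']=\big((q\circ y)\psi_\alpha\big)[b_0 z_i']=[q\circ y']$, as required. The main obstacle throughout is arranging that the cancellation at a single vertex can be carried out cleanly: everything hinges on the factorization of Lemma~\ref{phi}, which lets $\overline{\phi}_\alpha$ transport the ambient equation $[p\circ y]=[q\circ y]$ into the genuinely intrinsic equation $a_0 z_i=b_0 z_i$ of the vertex monoid $M_\alpha$, where the hypothesis on $\ars$ finally bites. Verifying that this extraction is uniform in $z_i$ and $z_i'$, so that only the last factor changes while the $\psi_\alpha$-part is held fixed, is the technical crux, and it is exactly what the complete-block commutativity together with the behaviour of $N_\alpha$ under concatenation secures.
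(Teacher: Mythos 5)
Your proposal is correct and uses exactly the machinery of the paper's own proof: the $N_\alpha$ analysis of $p\circ y$ (with the complete-block hypothesis guaranteeing $\alpha$-goodness and that $z_i$ sits last in $N_\alpha(p\circ y)$), the factorization and replacement clauses of Lemmas~\ref{phi} and~\ref{product}, and the retraction of Remark~\ref{rem:retract} to transport the equation into $M_\alpha$ where $z_i\,\ars\,z_i'$ applies. The only difference is organizational: you replace one coordinate at a time and chain the results by transitivity of $\ars$, whereas the paper peels off $z_1$ and recurses by induction on $n$ inside a single implication $[x][z]=[y][z]\Rightarrow[x][z']=[y][z']$ --- the two bookkeeping schemes are interchangeable here.
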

\begin{proof}
Let $x=x_1\circ \cdots \circ x_m,\,  y=y_1\circ \cdots \circ y_h\in X^*$ be such that $[x][z]=[y][z]$. We proceed by induction on  $n$ to show $[x][z']=[y][z']$. Clearly, the result is true when $n=|z|=0$, i.e. $z =\epsilon=z'$. Suppose now that $n>0$ and the result is true for all such $z$ with $|z|<n$.
 Let $s(z_1)=\alpha$.  Then
$s(z_k)\neq \alpha$ and $(\alpha,s(z_k))\in E$ for all $1<k\leq n$, so that certainly $z$ is $\alpha$-good. Suppose that
$$N_\alpha(x\circ z)=\{r_1, \cdots, r_l\}\mbox{ and } N_\alpha(y\circ z)=\{d_1, \cdots, d_{t} \}$$
 where
$$r_1< \cdots< r_l\mbox{ and } d_1< \cdots< d_{t}.$$ Since $z=z_1\circ \cdots\circ z_n$ is a complete block and $s(z_1)=\alpha$, we have that $z_1$ is the last letter in $x\circ z$ with support $\alpha$ and $z$ is clearly $\alpha$-good, so that $r_l=m+1$ by Definition \ref{def of N}. Similarly, $d_t=h+1$. By Lemma \ref{product},
$$[x\circ z]=[x'\circ z_2\circ \cdots \circ z_n][x_{r_1}\circ \cdots  \circ x_{r_{l-1}}\circ z_1]$$ and $$[y\circ z]=[y'\circ z_2\circ \cdots \circ z_{n}][y_{d_1}\circ \cdots \circ y_{d_{t-1}}\circ z_1].$$  By replacing the first letter $z_1$ of $z$ by $z_1'$ in $x\circ z$, we have
$$[x\circ z_1'\circ \cdots \circ z_n]=[x'\circ z_2\circ \cdots \circ z_n][x_{r_1}\circ \cdots  \circ x_{r_{l-1}}\circ z_1']$$ by Lemma \ref{product}. Similarly,
$$[y\circ z_1'\circ \cdots \circ z_n]=[y'\circ z_2\circ \cdots \circ z_{n}][y_{d_1}\circ \cdots \circ y_{d_{t-1}}\circ z_1'].$$

On the other hand, by applying the maps $\overline{\phi}_\alpha$ and $\overline{\psi}_\alpha$  to each side of $[x\circ z]=[y\circ z]$, we have $$[x'\circ z_2\circ \cdots \circ z_n]=[y'\circ z_2\circ \cdots \circ z_{n}],
\mbox{ and }[x_{r_1}\circ \cdots  \circ x_{r_{l-1}}\circ z_1]=[y_{d_1}\circ \cdots \circ y_{d_{t-1}}\circ z_1].$$
 Using Remark~\ref{rem:retract}, the latter gives  $x_{r_1}\cdots x_{r_{l-1}} z_1=y_{d_1}\cdots y_{d_{t-1}}z_1$. As $z_1~\mathcal{R}^*z_1'$
in $M_\alpha$,  we have $$x_{r_1}\cdots x_{r_{l-1}} z_1'=y_{d_1}\cdots y_{d_{t-1}}z_1'$$ so that $[x_{r_1}\circ \cdots  \circ x_{r_{l-1}}\circ z_1']=[y_{d_1}\circ \cdots \circ y_{d_{t-1}}\circ z_1']$. Therefore, $$[x\circ z_1'\circ \cdots \circ z_n]=[y\circ z_1'\circ \cdots \circ z_n]$$ and so
$$[x\circ z_1'][z_2\circ \cdots \circ z_n]=[y\circ z_1'][z_2\circ \cdots \circ z_n].$$
Our inductive assumption now gives
$$[x][ z_1'\circ z_2'\circ \cdots \circ z_n']=[x\circ z_1'][z_2'\circ \cdots \circ z_n']=[y\circ z_1'][z_2'\circ \cdots \circ z_n']=[y][ z_1'\circ z_2'\circ \cdots \circ z_n'].$$
The result follows by induction.
\end{proof}

\begin{Prop}\label{prop:mainprop}  Let $u\in X^*$ and let $[u]=[a][v]$
where $a,v\in X^*$ are such that all letters contained in $a$ are left invertible, and $v=v_1\circ \cdots \circ v_m$ is a left Foata normal form with blocks $v_k$, $1\leq k\leq n$, such that $v_1$ contains no left invertible letters.  Let
$v_1=z_1\circ \cdots \circ z_s\in X^*$. Suppose that for each $1\leq j\leq s$ an idempotent $z_j^+\in M_{s(z_j)}$ is chosen such  that
 $z_j^+~\mathcal{R}^*~z_j$
in $M_{s(z_j)}$, and put  $v_1^+=z_1^+\circ \cdots \circ z_s^+$.
 Let $[a']$ be a left inverse of $[a]$ in $\mathscr{GP}$. Then
$$[u]~\mathcal{R}^*~[a]  [v_1^+] [a']$$
and  $[a] [v_1^+] [a']$ is  idempotent.
\end{Prop}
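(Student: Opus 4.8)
The plan is to assemble this statement from the two main structural results of this section, Propositions~\ref{step-1} and~\ref{step-2}, together with the fact that $\mathcal{R}^*$ is a left congruence. All of the genuinely hard analytic work (the $\alpha$-amenable/$\alpha$-absorbing machinery feeding Lemma~\ref{phi}, and hence Proposition~\ref{step-2}) has already been done, so at this point only a chain of $\mathcal{R}^*$-equivalences remains.

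First I would pass from $v$ to its initial block. Since $v=v_1\circ\cdots\circ v_m$ is a left Foata normal form whose first block $v_1$ contains no left invertible letters, Proposition~\ref{step-1}(1) gives $[v]\,\mathcal{R}^*\,[v_1]$; as $\mathcal{R}^*$ is a left congruence, multiplying on the left by $[a]$ yields $[u]=[a][v]\,\mathcal{R}^*\,[a][v_1]$. Next I would replace $v_1$ by $v_1^+$. Because $v_1$ is a block of a left Foata normal form, $s(v_1)$ is a complete subgraph, and since $v_1=z_1\circ\cdots\circ z_s$ is reduced this forces $s(z_j)\neq s(z_k)$ for $j\neq k$ (by the remarks following Proposition~\ref{shuffle}); thus $v_1$ satisfies exactly the hypotheses of Proposition~\ref{step-2}. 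Applying it with the chosen idempotents $z_j^+\,\mathcal{R}^*\,z_j$ in each $M_{s(z_j)}$ gives $[v_1]\,\mathcal{R}^*\,[v_1^+]$, and a further use of the left congruence property gives $[a][v_1]\,\mathcal{R}^*\,[a][v_1^+]$. By transitivity, $[u]\,\mathcal{R}^*\,[a][v_1^+]$.

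Finally I would insert $[a']$ on the right. Since $[a']$ is a left inverse of $[a]$ we have $[a'][a]=[\epsilon]$, whence $[a']\mathscr{GP}=\mathscr{GP}=[\epsilon]\mathscr{GP}$ and so $[a']\,\mathcal{R}\,[\epsilon]$, giving $[a']\,\mathcal{R}^*\,[\epsilon]$. One more application of the left congruence property, multiplying on the left by $[a][v_1^+]$, yields $[a][v_1^+][a']\,\mathcal{R}^*\,[a][v_1^+]$, so that $[u]\,\mathcal{R}^*\,[a][v_1^+][a']$ by transitivity. For idempotency, note that $v_1^+=z_1^+\circ\cdots\circ z_s^+$ is an idempotent in standard form (Definition~\ref{standard idempotent}, with empty $b$ and $b'$), since $s(v_1^+)$ is complete and each $z_j^+$ is idempotent; hence $[v_1^+]^2=[v_1^+]$. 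Using $[a'][a]=[\epsilon]$, the square then telescopes:
$$([a][v_1^+][a'])^2=[a][v_1^+]\,([a'][a])\,[v_1^+][a']=[a][v_1^+]^2[a']=[a][v_1^+][a'].$$

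I do not expect a genuine obstacle: every difficult step has been discharged earlier, and this proposition is essentially a bookkeeping assembly. The only points needing care are verifying that $v_1$ meets the precise hypotheses of Proposition~\ref{step-2} (completeness together with pairwise distinct supports of its letters), and keeping the orientation of the left inverse straight, so that it is the middle factor $[a'][a]$ — and not $[a][a']$ — that collapses to $[\epsilon]$ in the idempotency computation.
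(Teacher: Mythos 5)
Your proof is correct and takes essentially the same route as the paper's: Proposition~\ref{step-1}(1), then Proposition~\ref{step-2} applied to the complete block $v_1$, then the left congruence property of $\ars$ together with $[a']\,\ar\,[\epsilon]$. The only (harmless) difference is that you verify idempotency by the explicit telescoping computation, whereas the paper simply observes that $a\circ v_1^+\circ a'$ is an idempotent in the standard form of Definition~\ref{standard idempotent}.
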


\begin{proof} Under  the conditions of the hypothesis, it follows from (1) of Proposition~\ref{step-1} that $[v]\,\ars\, [v_1]$ and then from Proposition~\ref{step-2} that $[v_1]\,\ars\, [v_1^+]$. Since $[a'][a]=[\epsilon]$, we have $[a']\,\ar\, [\epsilon]$ and so certainly $[a']\,\ars\, [\epsilon]$. Then
$$[u]=[a] [v]~\mathcal{R}^*~[a] [v_1^+]~\mathcal{R}^*~[a]  [v_1^+] [a'],$$
using  the fact that  $\ars$ is a left congruence.
Further,  $[a] [v_1^+] [a']$ is  idempotent  by Lemma \ref{idempotent}.
\end{proof}

The main result of our paper now follows.

\begin{Thm}\label{thm:mainm}
The graph product $\mathscr{GP}=\mathscr{GP}(\Gamma,\mathcal{M})$ of left abundant monoids $\mathcal{M}=\{M_\alpha: \alpha\in V\}$ with respect to $\Gamma$ is  left abundant.
\end{Thm}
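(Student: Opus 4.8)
The plan is to reduce the theorem to an immediate application of Proposition~\ref{prop:mainprop}, which already encapsulates essentially all of the combinatorial work done in this section. By Definition~\ref{defn:abundant}, to show that $\mathscr{GP}$ is left abundant it suffices to exhibit, for an arbitrary $[u]\in\mathscr{GP}$ (where $u\in X^*$), an idempotent lying in its \ars-class.

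First I would invoke Lemma~\ref{decomposition}, parts (1) and (3) (equivalently, Proposition~\ref{step-1}(2)), to write $[u]=[a][v]$ where every letter of $a$ is left invertible, $|a|$ is maximal with respect to these constraints, and $v=v_1\circ\cdots\circ v_m$ is a left Foata normal form whose first block $v_1$ contains no left invertible letters. Writing $v_1=z_1\circ\cdots\circ z_s$, each letter $z_j$ is an element of the vertex monoid $M_{s(z_j)}$; since $v_1$ is a complete block, the supports $s(z_j)$ are moreover pairwise distinct.

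Next I would bring in the standing hypothesis that each vertex monoid is left abundant: for every $1\leq j\leq s$ there is an idempotent $z_j^{+}\in M_{s(z_j)}$ with $z_j\,\ars\,z_j^{+}$ in $M_{s(z_j)}$, and I set $v_1^{+}=z_1^{+}\circ\cdots\circ z_s^{+}$. Because the letters of $a$ are all left invertible, Lemma~\ref{invertible} supplies a left inverse $[a']$ of $[a]$ in $\mathscr{GP}$. All the hypotheses of Proposition~\ref{prop:mainprop} are now satisfied, so that proposition yields $[u]\,\ars\,[a][v_1^{+}][a']$ together with the fact that $[a][v_1^{+}][a']$ is idempotent. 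As $[u]$ was arbitrary, every element of $\mathscr{GP}$ is \ars-related to an idempotent, and $\mathscr{GP}$ is left abundant.

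The point to emphasise is that at this stage there is no remaining obstacle: the genuine difficulty---controlling reduced forms of products and relating the \ars-class of a general element to that of the first block of its left Foata normal form---has already been absorbed into Propositions~\ref{step-1}, \ref{step-2} and \ref{prop:mainprop}. The only verifications specific to this theorem are the three local facts assembled above, namely the existence of the decomposition $[u]=[a][v]$, the left abundancy of the vertex monoids producing the idempotents $z_j^{+}$, and the left invertibility of $[a]$ producing $[a']$; each follows at once from a result already established.
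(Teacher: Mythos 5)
Your proposal is correct and follows exactly the same route as the paper: the paper's proof of Theorem~\ref{thm:mainm} likewise cites Lemma~\ref{decomposition} for the decomposition $[u]=[a][v]$ and then applies Proposition~\ref{prop:mainprop} using the left abundancy of the vertex monoids. You have merely spelled out the intermediate verifications (pairwise distinct supports in the complete block, existence of $[a']$ via Lemma~\ref{invertible}) that the paper leaves implicit.
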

\begin{proof} Let $[u]\in \mathscr{GP}$. By  Lemma~\ref{decomposition} we are guaranteed a decomposition of $u$ as in Proposition~\ref{prop:mainprop}. The result now follows from the assumption that each
vertex monoid is left abundant.
\end{proof}

Of course, the left-right dual of Theorem ~\ref{thm:mainm} holds, and hence one may also deduce that the graph product of abundant monoids is abundant. A consequence is worth stating separately.

\begin{Cor}\label{cor:reg}
The graph product $\mathscr{GP}=\mathscr{GP}(\Gamma,\mathcal{M})$ of regular monoids $\mathcal{M}=\{M_\alpha: \alpha\in V\}$ with respect to $\Gamma$ is   abundant.
\end{Cor}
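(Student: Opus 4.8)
The plan is to deduce this directly from Theorem~\ref{thm:mainm} together with its left--right dual, by first observing that a regular monoid is necessarily abundant. Recall from Section~\ref{sec:preliminaries} that a monoid $M$ is regular precisely when every element is $\ar$-related to an idempotent, equivalently (by duality) when every element is $\el$-related to an idempotent, and that $\ar\subseteq\ars$ with equality in the regular case. Thus, if $M$ is regular and $a\in M$, choosing an idempotent $e$ with $a\,\ar\,e$ yields $a\,\ars\,e$, so $M$ is left abundant; the dual argument, using $\el$ and $\els$, shows $M$ is right abundant. Hence every regular monoid is abundant.

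First I would therefore record that each vertex monoid $M_\alpha$, being regular, is in particular both left and right abundant. By Theorem~\ref{thm:mainm}, the graph product $\mathscr{GP}(\Gamma,\mathcal{M})$ is left abundant; by the left--right dual of Theorem~\ref{thm:mainm} (noted in the remark following its proof), $\mathscr{GP}(\Gamma,\mathcal{M})$ is right abundant. Combining these two facts gives that $\mathscr{GP}(\Gamma,\mathcal{M})$ is abundant, which is exactly the assertion of the corollary.

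There is no serious obstacle at this stage: all of the combinatorial machinery has already been developed in proving Theorem~\ref{thm:mainm}, and the corollary is a purely formal consequence of that theorem and its dual. The one point meriting a moment's care is the implication that regularity entails abundancy, which rests on the containment $\ar\subseteq\ars$ together with the characterisation of regularity via $\ar$- (equivalently $\el$-) relatedness to idempotents. I would emphasise that one must \emph{not} expect the stronger conclusion that the graph product is regular: as observed for free products in Section~\ref{sec:preliminaries}, a graph product of regular monoids is in general not regular, so abundancy is genuinely the best one can hope for here.
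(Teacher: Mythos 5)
Your proposal is correct and follows exactly the route the paper intends: regularity of each vertex monoid gives left and right abundancy via $\ar\subseteq\ars$ (and dually $\el\subseteq\els$), and then Theorem~\ref{thm:mainm} together with its left--right dual, noted in the paper immediately before the corollary, yields the result. Nothing further is needed.
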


\section{Graph Products of left Fountain Monoids are left Fountain}\label{sec:Fountainicity}

We now discuss the left Fountainicity of the graph product $\mathscr{GP}=\mathscr{GP}(\Gamma,\mathcal{M})$ of left Fountain monoids $\mathcal{M}=\{M_\alpha: \alpha\in V\}$ with respect to $\Gamma$.

Our strategy is as follows. We know from
Lemma~\ref{decomposition} that
any element of $\mathscr{GP}$ has  reduced form
$a\circ x$ where the letters of $a$ are all left invertible, $x=x_1\circ \cdots \circ x_n$ is a left Foata normal form with blocks $x_i$, $1\leq i\leq n$, such that $x_1$ contains no  left invertible letters. From  Proposition~\ref{step-1} we then have
$[a\circ x]\,\ars\,  [a\circ x_1]$ and so certainly $[a\circ x]\,\art\,  [a\circ x_1]$.
We take an idempotent of  $\mathscr{GP}$ in standard form $u$ and examine the reduction processes for the word
$u\circ a\circ x_1$ in the case $[u\circ a \circ x_1]=
[a\circ x_1]$. This eventually enables us to show that  $[u\circ a \circ x_1]=
[a\circ x_1]$ if and only if  $[u\circ a \circ \overline{x}_1]=
[a\circ \overline{x}_1]$ where $\overline{x}_1$ is obtained from $x_1$ by replacing each letter by an idempotent in the same $\art$-class in the relevant vertex monoid. Hence
$[a\circ x_1]\,\art\, [a\circ \overline{x}_1]$ but then with
$[a']$ being a left inverse for $[a]$ we arrive at
$[a\circ x_1]\,\art\, [a\circ \overline{x}_1\circ a']$. The latter element is clearly idempotent.

To proceed, we rely on the
analysis of $\alpha$-good suffices of words provided in Section~\ref{sec:abundancy}. In addition, we need some further analysis of the way in which the product of two reduced words reduces in $\mathscr{GP}$.

It is worth remarking that if every vertex monoid has the property that left invertible elements are also right invertible, then our arguments would need to be less delicate. Since, in that case, $[u\circ a\circ x_1]=
[a\circ x_1]$ if and only if $[a'\circ u\circ a\circ x_1]=[x_1]$, and the fact that $s(x_1)$ is complete then makes the subsequent analysis somewhat easier.

Lemma~\ref{product reduction} shows the different ways in which multiplying a reduced word by $p\in X\setminus I$ leads to a reduced word. In some cases, we need to delete a letter of $I$, that is, use Step (id) of Definition~\ref{defn:reduction}; in other cases, we need only
Steps  (v) and (e). This leads to the following notion.

\begin{defn}\label{H}
Let $x=x_1\circ \cdots \circ x_n, \,\,y=y_1\circ \cdots \circ y_m\in X^*$ be reduced words. We say that $x\circ y$ is {\it $S$-reducible} {if in reducing $x\circ y$ to a reduced form we only use Steps (v) and (e) in  Definition~\ref{defn:reduction}.}
\end{defn}

 We use the term `$S$-reducible' since using Steps (v) and (e) would be allowed in the corresponding notion of a {\em semigroup} graph product: see Section~\ref{sec:application}.

\begin{lemma}\label{product reduction 2}
Let $x=x_1\circ \cdots \circ x_n,\,\, y=y_1\circ \cdots \circ y_m\in X^*$ be reduced words. Suppose that $x\circ y$ is $S$-reducible. Then $x\circ y$ shuffles to
\[p_1\circ\cdots \circ p_n\circ y'\]
and has reduced form
$$q_1\circ \cdots \circ q_n\circ y'$$ where for all $1\leq j\leq n$, $q_j=x_j=p_j$  or $p_j=x_j\circ y_{r_j}$ and $q_j=x_jy_{r_j}$
 for some distinct indices
$ r_j\in \{ 1,\cdots , m\}$,   and  $y'\in X^*$ is the word obtained from $y$ by deleting the letters $y_{r_j}$.
\end{lemma}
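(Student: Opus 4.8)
The plan is to induct on $m=|y|$, at each stage stripping off the last letter $y_m$ of $y$ and appending it to the reduced form supplied by the inductive hypothesis; the single-letter append is controlled by the left--right dual of Lemma~\ref{product reduction}. The base case $m=0$ is immediate: then $y=y'=\epsilon$ and $x\circ y=x$ is already reduced, so we may take $q_j=p_j=x_j$. For the inductive step write $\tilde y=y_1\circ\cdots\circ y_{m-1}$, so that $x\circ y=(x\circ\tilde y)\circ y_m$, and we seek to run the hypothesis on $x\circ\tilde y$.

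The first point to secure is that $S$-reducibility descends, i.e.\ that $x\circ\tilde y$ is again $S$-reducible. The reason is that $y_m$, being the rightmost letter of the reduced word $y$, can never be the letter that separates two other occurrences of a common vertex: any occurrence that is dependent on both $y_r$ and $y_m$ (with $r<m$) would, by the order-preservation principle discussed below, have to lie strictly between them, whereas $y_m$ lies to the right of every other letter of $y$. Consequently $y_m$ never shields a pair from being forced to merge, so an $(\mathrm{id})$-free reduction of $x\circ y$ restricts to one of $x\circ\tilde y$; every merge forced in $x\circ\tilde y$ is already forced in $x\circ y$ and hence is non-identity. This lets me apply the inductive hypothesis to obtain a reduced form $W:=q_1\circ\cdots\circ q_n\circ y''$, and a shuffle-equivalent word $p_1\circ\cdots\circ p_n\circ y''$, of $x\circ\tilde y$ with the asserted structure.

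Next I append $y_m$ to $W$ and invoke the dual of Lemma~\ref{product reduction}. If $W\circ y_m$ is already reduced then $y_m$ simply joins the leftover block and I set $y'=y''\circ y_m$, leaving every $q_j$ (and $p_j$) unchanged. Otherwise $y_m$ (of vertex $\gamma$, say) glues to the greatest-indexed letter $W_k$ with $s(W_k)=\gamma$ such that every letter to its right commutes with $\gamma$. The crux is to show that $W_k$ must be a \emph{clean} slot $q_j=x_j$: it can be neither a leftover letter of $y''$ nor a slot $x_{j'}y_r$ that has already absorbed a letter of $y$. In each forbidden case the absorbed or leftover letter $y_r$ satisfies $s(y_r)=\gamma$ with $r<m$, so reducedness of $y$ produces an obstruction $y_{k'}$ with $r<k'<m$ and $(s(y_{k'}),\gamma)\notin E$ lying between $y_r$ and $y_m$; order-preservation forces this obstruction to remain strictly to the right of $W_k$ in $W$, contradicting the requirement that everything to the right of $W_k$ commute with $\gamma$. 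Hence $y_m$ glues to a clean slot $x_j$, giving the new slot $q_j=x_jy_m$ (non-identity by $S$-reducibility, so reduced), with $y'=y''$; correspondingly $p_j=x_j\circ y_m$ is obtained by shuffling $y_m$ leftward past $p_{j+1}\circ\cdots\circ p_n\circ y''$, each letter of which commutes with $\gamma$. In either outcome the displayed form, the distinctness of the indices $r_j$, and the description of $y'$ as $y$ with the absorbed letters deleted are all maintained, completing the induction.

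The main obstacle, underlying both the descent of $S$-reducibility and the location of $W_k$, is the principle that two occurrences whose vertices are equal or non-adjacent keep their relative order under every shuffle and every Step~$(\mathrm{v})$ merge, so that an interposed obstruction can never be removed. This is the monoid analogue of the order-preservation already used implicitly in Section~\ref{sec:lfnf} (for instance in the remarks preceding Lemma~\ref{decomposition}), and I would isolate it as a short lemma before beginning the induction, since everything delicate in the argument is a consequence of it.
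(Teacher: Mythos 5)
Your proof is correct, but it runs the induction in the mirror image of the paper's: you induct on $m=|y|$ and peel the \emph{last} letter off $y$, whereas the paper inducts on $n=|x|$ and peels the \emph{first} letter off $x$, applying the inductive hypothesis to $x_2\circ\cdots\circ x_n\circ y$ and then invoking Lemma~\ref{product reduction} for the single prepended letter $x_1$. The direction matters for how hard the crux is. In the paper's version, once the inductive hypothesis returns $q_2\circ\cdots\circ q_n\circ y'$, the prefix $x_1\circ q_2\circ\cdots\circ q_n$ is automatically reduced by Remark~\ref{observations for Fountainicity} (since $s(q_j)=s(x_j)$ and $x$ is reduced), so the letter that $x_1$ glues to is forced to come from $y'$ with no further argument; the whole burden is carried by the reducedness of $x$. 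In your version the appended letter $y_m$ could a priori glue to a leftover letter of $y''$ or to an already-filled slot $x_{j'}y_r$, and excluding these requires the reducedness of $y$ together with the order-preservation principle for equal or non-adjacent supports, which you rightly say must be isolated as a preliminary lemma (the paper only states it for shuffles, in the remarks before Lemma~\ref{decomposition}, and you need it for Step~(v) merges as well). Your exclusion argument does go through — the obstruction $y_{k'}$ between $y_r$ and $y_m$ cannot be absorbed into a slot to the left of $y_r$ without crossing $y_r$, so it survives to the right of $W_k$ and violates the gluing condition — and your descent of $S$-reducibility is the exact analogue of the paper's one-line claim that $x_2\circ\cdots\circ x_n\circ y$ is $S$-reducible. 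So both proofs are sound; the paper's choice of which end to peel simply buys a much cheaper key step, at no cost in generality.
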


\begin{proof}
We use induction on the length $n$ of $x$. Clearly, the result is true for $n=1$ by Lemma \ref{product reduction}. Suppose that $n>1$ and the result is true for all reduced words $x$ of length strictly less than $n$.  Let $x'=x_2\circ \cdots \circ x_n$. Clearly, $x'\circ y$ is also $S$-reducible, and so $x'\circ y$
shuffles to
\[u_1=p_2\circ\cdots \circ p_n\circ y'\]
and has
a reduced form $$u_2=q_2\circ \cdots \circ q_n\circ y'$$ where for all $2\leq j\leq n$, $q_j=x_j=p_j$ or $p_j=x_j\circ y_{r_j}$ and $q_j=x_jy_{r_j}$
for some distinct indices $r_j\in \{ 1,\hdots ,m\}$,  and  $y'$ is the word obtained from $y$ by deleting the letters $y_{r_j}$.

Now consider the words $$w_1=x_1\circ u_1
\mbox{ and }w_2=x_1\circ u_2.$$
If $w_2$  is  reduced then we are done, with $p_1=q_1=x_1$. Suppose therefore that  $w_2$ is not reduced.  Since $s(q_j)=s(x_j)$ for all $2\leq j\leq n$, the word $x_1\circ q_2\circ \cdots \circ q_n$ is reduced by  Remark~\ref{observations for Fountainicity}. So, there must exist some letter $y_t$ in $y'$ with $s(x_1)=s(y_t)$ that can be shuffled to the front of
both  $u_1$ and $u_2$. Clearly $t$ is distinct from any existing $r_j$; we put $r_1=t$.  As $x\circ y$ is $S$-reducible, $x_1y_{r_1}$ is not  an identity. Therefore, $w$ shuffles  to
$$p_1\circ p_2\circ \cdots \circ p_n\circ y''$$
and, from Lemma~\ref{product reduction}, has reduced form
$$q_1\circ q_2\circ \cdots \circ q_n\circ y''$$
where
$p_1=x_1\circ y_{r_1}$ and $q_1=x_1y_{r_1}$ and $y''$ is the word obtained by deleting $y_{r_1}$ from $y'$.
\end{proof}

\begin{Cor}\label{H-2}
Let $\alpha\in V$ and let $x, y\in X^*$ be reduced words such that  $x$ is not $\alpha$-good but $x\circ y$ is $\alpha$-good. Then $x\circ y$ is not $S$-reducible.
\end{Cor}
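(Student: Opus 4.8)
The plan is to argue by contradiction, exploiting the fact that $S$-reducibility forces the reduced form of $x\circ y$ to retain the entire support of $x$. First I would suppose that $x\circ y$ is $S$-reducible and write $x=x_1\circ\cdots\circ x_n$. By Lemma~\ref{product reduction 2}, $x\circ y$ then has a reduced form $q_1\circ\cdots\circ q_n\circ y'$ in which, for each $1\leq j\leq n$, either $q_j=x_j$ or $q_j=x_jy_{r_j}$ for some index $r_j$ with $s(x_j)=s(y_{r_j})$. Since $S$-reducibility means Step (id) of Definition~\ref{defn:reduction} is never applied, no glued product $x_jy_{r_j}$ is an identity, so in every case $s(q_j)=s(x_j)$. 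Hence $s(x)=\{s(q_j):1\leq j\leq n\}$ is contained in the support of the reduced form $q_1\circ\cdots\circ q_n\circ y'$ of $x\circ y$.

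Next I would bring in the hypothesis that $x$ is not $\alpha$-good. As $x$ is reduced, its support $s(x)$ is precisely the support of a reduced form of $x$, so by Definition~\ref{defn of i-good} there exists $\beta\in s(x)$ with $\beta\neq\alpha$ and $(\beta,\alpha)\notin E$. By the previous paragraph this $\beta$ also appears in the support of the reduced form of $x\circ y$. But $x\circ y$ is assumed $\alpha$-good, so every element of the support of any reduced form of $x\circ y$ is either equal to $\alpha$ or joined to $\alpha$ by an edge; this contradicts the existence of such a $\beta$. Therefore $x\circ y$ cannot be $S$-reducible.

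The only real content is the observation that $S$-reducibility preserves supports, and this is delivered cleanly by Lemma~\ref{product reduction 2}: the reduced form of an $S$-reducible product $x\circ y$ is obtained from $x$ by gluing onto each letter $x_j$ at most one letter of $y$ of the same support, never producing an identity. Thus the potential obstacle---that the reduction might destroy the offending vertex $\beta$ and so rescue $\alpha$-goodness---is exactly what $S$-reducibility rules out, since destroying a letter requires a Step (id) deletion. Once this is in place the corollary is immediate, and no delicate shuffling analysis beyond Lemma~\ref{product reduction 2} is required.
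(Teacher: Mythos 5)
Your proposal is correct and follows essentially the same route as the paper: both deduce from Lemma~\ref{product reduction 2} that $S$-reducibility forces $s(x)$ to survive into the reduced form of $x\circ y$, so the vertex $\beta$ witnessing that $x$ is not $\alpha$-good contradicts the $\alpha$-goodness of $x\circ y$. Your write-up merely spells out in more detail why $s(q_j)=s(x_j)$, which the paper leaves implicit.
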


\begin{proof} Let $x,y$ be as given.
If $x\circ y$ is $S$-reducible, then $s(x)$ is a subset of the support of the reduced form of $x\circ y$, by Lemma \ref{product reduction 2}. Since  $x$ is not $\alpha$-good,  neither is $x\circ y$,
a  contradiction.
\end{proof}

In what follows,  we use $u=b\circ e\circ b'$ to denote a standard form of an idempotent $[u]\in \mathscr{GP}$, as described in Definition \ref{standard idempotent}.  We use $a\circ x$ to denote a word in $X^*$ satisfying the following conditions:

(a) $a=a_1\circ \cdots \circ a_l$ is a reduced word such that all letters in $a$ are left invertible;

(b) $x=x_1\circ \cdots \circ x_k$ such that $s(x)$ is complete and $s(x_j)\neq s(x_t)$ for all $1\leq j<t\leq k$;

(c) there exists no $j$ with $1\leq j\leq l$ such that $(s(a_j), s(a_t))\in E$ for all $j+1\leq t\leq l$ and $s(a_j)\in s(x)$.

The reader by now might think we should assume $a\circ x$ is reduced and no letter in $x$ is left invertible. However, we need this rather looser set up. The reason for this  will become apparent later, when we apply Lemma~\ref{key-F} iteratively in
Corollary~\ref{Rtilde}.

\begin{lemma}\label{key5}
Let $a\circ x$ be defined as above. Then

{\rm (i)} for any $y=y_1\circ \cdots \circ y_k\in X^*$ such that $s(y_j)=s(x_j)$ for all $1\leq j\leq k$, $a\circ y$ is of the same form as $a\circ x$;

{\rm (ii)} $a\circ x'$ is a reduced form of $a\circ x$, where $x'$ is the word obtained from $x$ by deleting all letters in $x$ which are identities;

{\rm (iii)} for each  $\alpha\in s(x)$, $N_\alpha(a\circ x)$ contains the unique letter $x_j$ in $x$ such that $s(x_j)=\alpha$.
\end{lemma}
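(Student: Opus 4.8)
The plan is to handle the three parts separately, since (i) and (iii) fall straight out of the definitions while (ii) is where condition (c) does the real work.

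For (i), I would observe that each of the three defining conditions for $a\circ x$ depends on $x$ only through its support. Condition (a) concerns $a$ alone and so is untouched when $x$ is replaced by $y$. Since $s(y_j)=s(x_j)$ for all $1\leq j\leq k$, we have $s(y)=s(x)$ as subsets of $V$; hence $s(y)$ is complete and the supports $s(y_j)$ remain pairwise distinct, giving (b). Condition (c) refers to $x$ only via the clause $s(a_j)\in s(x)$, so with $s(y)=s(x)$ it reads identically, giving (c). Thus $a\circ y$ is of the same form.

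For (ii), I would first check that $x'$ is a complete block. Because the supports $s(x_j)$ are pairwise distinct and $s(x)$ is complete, after deleting the identity letters the surviving letters still have pairwise distinct supports spanning a complete subgraph; such a word is reduced, since the coincidence-of-support clause in Definition~\ref{def of reduced} is vacuous when all supports differ. Moreover $[a\circ x']=[a\circ x]$, as deleting identities is a reduction step of type (id) in Definition~\ref{defn:reduction}. It then remains to show $a\circ x'$ is itself reduced, and here I would argue by contradiction via Remark~\ref{cor:concatenate}: if $a\circ x'$ were not reduced there would be indices $i$ in $a$ and $j$ in $x'$ with $s(a_i)=s(x_j')$ and $(s(a_i),s(a_h))\in E$ for all $i<h\leq l$. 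But $s(a_i)=s(x_j')\in s(x)$, so this index $i$ would witness exactly the configuration that condition (c) forbids, a contradiction. Hence $a\circ x'$ is reduced and is a reduced form of $a\circ x$.

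For (iii), I would fix $\alpha\in s(x)$ and let $x_j$ be the unique letter of $x$ with $s(x_j)=\alpha$, which sits at position $l+j$ when $a\circ x$ is written out letter by letter. The suffix from that position is $x_j\circ x_{j+1}\circ\cdots\circ x_k$, whose support lies in the complete subgraph $s(x)$ containing $\alpha$; so every $\beta$ in that support either equals $\alpha$ or is joined to $\alpha$ by an edge, and the remarks following Definition~\ref{defn of i-good} show the suffix is $\alpha$-good. Therefore $l+j\in N_\alpha(a\circ x)$ by Definition~\ref{def of N}, which is the assertion. The one step demanding genuine care is the reducedness of $a\circ x'$ in (ii): one must line up the indices correctly and notice that condition (c)'s requirements, namely $(s(a_j),s(a_t))\in E$ for all $j+1\leq t\leq l$ together with $s(a_j)\in s(x)$, are precisely what Remark~\ref{cor:concatenate} manufactures from an offending coincidence $s(a_i)=s(x_j')$; everything else is bookkeeping.
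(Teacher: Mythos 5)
Your parts (i) and (ii) are fine. The paper dismisses both as ``clear'', and your justification of (ii) --- that an offending coincidence $s(a_i)=s(x_j')$ supplied by Remark~\ref{cor:concatenate} would produce exactly the index forbidden by condition (c) --- is correct and is indeed the only place where any work is needed there.

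The problem is part (iii). You prove only the containment $x_j\in N_\alpha(a\circ x)$, which is the trivial half: it follows immediately from $s(x)$ being complete. But the content the paper actually establishes, and the content that is needed when the lemma is invoked in the proof of Lemma~\ref{key-F} (to pin down $N_\alpha(w)$ as consisting of certain letters of $u$ together with $x_1$ \emph{and nothing else from $a\circ x$}), is the equality $N_\alpha(a\circ x)=\{x_j\}$ --- that is, no letter of $a$ lies in $N_\alpha(a\circ x)$. This exclusion is the half where condition (c) does its work in (iii), and your argument omits it entirely. The paper's argument runs: if some $a_h$ with $s(a_h)=\alpha$ were in $N_\alpha(a\circ x)$, then $a_h\circ\cdots\circ a_l\circ x$ would be $\alpha$-good, hence so would its reduced form $a_h\circ\cdots\circ a_l\circ x'$ (using (ii)); taking $t$ largest in $[h,l]$ with $s(a_t)=\alpha$, every later letter $a_r$ has $s(a_r)\neq\alpha$ and, by $\alpha$-goodness of a reduced word, $(s(a_r),\alpha)\in E$, so $(s(a_t),s(a_r))\in E$ for all $t+1\leq r\leq l$ while $s(a_t)=\alpha\in s(x)$ --- contradicting (c). The wording ``contain the unique letter'' is admittedly ambiguous, but without the exclusion the lemma cannot support its later use, so you should supply this second half.
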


\begin{proof}
(i)  and (ii) are clear.

(iii) Let $\alpha\in s(x)$ and let $j$ be the unique index guaranteed by (b) such that $s(x_j)=\alpha$. Since $s(x)$ is complete, $x_j\in N_\alpha(a\circ x)$. Suppose (with some abuse of notation) that $a_h\in N_\alpha(a\circ x)$. Then $s(a_h)=\alpha$ and $a_h\circ \cdots \circ a_l\circ x$ is $\alpha$-good, hence so is its reduced form $a_h\circ \cdots \circ a_l\circ x'$. Let $h\leq t\leq l$ be the largest such that $s(a_t)=\alpha$. Then $(s(a_t), s(a_r))\in E$ for all $t+1\leq r\leq l$, contradicting  (c). Thus, $N_\alpha(a\circ x)=\{x_j\}$.
\end{proof}

 In Corollary~\ref{case1}, and Lemmas \ref{case2} and \ref{key-F}
let $a\circ x$  and
$u=b\circ e\circ b'$ be defined as above such that
$[u][a\circ x]=[a\circ x]$, and let $w=u\circ a \circ x$.

\begin{lemma}\label{lemcase1} Suppose that $u$ is $\alpha$-good.
 For any $j\in \{ 1,\cdots ,n\}$ we have
$b_j'\in N_\alpha(u)$ if and only if $b_j\in N_\alpha(u)$.
\end{lemma}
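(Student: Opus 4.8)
The plan is to reduce the whole statement to one elementary observation about $\alpha$-good reduced words: \emph{if a reduced word is $\alpha$-good, then every one of its letters has support equal to $\alpha$ or adjacent to $\alpha$ in $\Gamma$, and consequently every right factor of it is again $\alpha$-good}. Once this is in hand, membership in $N_\alpha(u)$ collapses to a pure support condition, and the lemma is immediate.

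First I would recall that a standard-form idempotent $u=b\circ e\circ b'$ is, by Definition~\ref{standard idempotent}, a \emph{reduced} word. Hence $u$ is itself a reduced form of $[u]$, and its support $s(u)$ is precisely the set $\{s(u_i)\}$ of supports of its letters. Since $u$ is assumed $\alpha$-good, applying Definition~\ref{defn of i-good} to the reduced form $u$ shows that every $\beta\in s(u)$ satisfies $\beta=\alpha$ or $(\beta,\alpha)\in E$; that is, every letter of $u$ has support $\alpha$ or adjacent to $\alpha$. Next I would note that any right factor $w$ of $u$ is again reduced (a factor of a reduced word is reduced) and has $s(w)\subseteq s(u)$; by the remark following Definition~\ref{defn of i-good}, $w$ is therefore $\alpha$-good. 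Writing $u=b_1\circ\cdots\circ b_n\circ e_1\circ\cdots\circ e_m\circ b_n'\circ\cdots\circ b_1'$, both the suffix of $u$ beginning at $b_j$ and the suffix beginning at $b_j'$ are such right factors, hence both are $\alpha$-good, for every $j$.

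Finally, by Definition~\ref{def of N}, a letter lies in $N_\alpha(u)$ exactly when its support is $\alpha$ and the suffix of $u$ it begins is $\alpha$-good. As the two relevant suffixes are automatically $\alpha$-good by the previous step, membership reduces to the support condition alone: $b_j\in N_\alpha(u)$ iff $s(b_j)=\alpha$, and $b_j'\in N_\alpha(u)$ iff $s(b_j')=\alpha$. Since $b_j'b_j$ is an identity in $M_{s(b_j)}$, the letters $b_j$ and $b_j'$ lie in the same vertex monoid, whence $s(b_j)=s(b_j')$; the two membership conditions therefore coincide, giving $b_j'\in N_\alpha(u)\Leftrightarrow b_j\in N_\alpha(u)$ as required.

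I do not expect a genuine obstacle here: the only point needing care is the bookkeeping that both the suffix from $b_j$ and the suffix from $b_j'$ are honest right factors of the single reduced word $u$, after which $\alpha$-goodness of every suffix is forced. It is worth flagging that the global hypothesis $[u][a\circ x]=[a\circ x]$ attached to this batch of lemmas plays no role in this particular statement, which depends only on $u$ being a standard-form idempotent that happens to be $\alpha$-good.
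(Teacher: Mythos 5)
Your proof is correct, but it takes a genuinely different and more elementary route than the paper's. The paper proves the lemma by a chain of equivalences: starting from ``$b_j'\circ\cdots\circ b_1'$ is $\alpha$-good'', it passes to the mirror word $b_j\circ\cdots\circ b_1$ and its reversal via Lemma~\ref{observations for Fountainicity-1} (using $s(b_i)=s(b_i')$ and that no letter lies in $I$), and then uses Lemma~\ref{i-good} together with the hypothesis that $u$ is $\alpha$-good to trade the prefix $b_1\circ\cdots\circ b_j$ for the complementary suffix $b_{j+1}\circ\cdots\circ b_n\circ e\circ b'$, landing on the condition defining $b_j\in N_\alpha(u)$. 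You instead observe that a standard-form idempotent $u$ is by definition a \emph{reduced} word, so $\alpha$-goodness of $u$ is equivalent to the letterwise condition that every $\beta\in s(u)$ equals $\alpha$ or is adjacent to it; since every suffix of $u$ is again reduced with support inside $s(u)$, every suffix is automatically $\alpha$-good, the suffix clause in Definition~\ref{def of N} becomes vacuous, and $N_\alpha(u)$ collapses to the positions with support $\alpha$ --- which is symmetric in $b_j$ and $b_j'$ because $b_j'b_j$ being an identity forces $s(b_j)=s(b_j')$. Both arguments are valid; yours is shorter and exposes why the lemma is really a triviality for reduced $\alpha$-good words, while the paper's version runs in parallel with the machinery needed for Lemma~\ref{case2}, where the ambient word $w=u\circ a\circ x$ is neither reduced nor $\alpha$-good and no such collapse is available. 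Your closing observation that the hypothesis $[u][a\circ x]=[a\circ x]$ is irrelevant here is also accurate --- the paper's own framing excludes Lemma~\ref{lemcase1} from the list of statements assuming that equation.
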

\begin{proof}  Using  Lemma  \ref{i-good}, Corollary~\ref{connected i-good} and  Lemma \ref{observations for Fountainicity-1}, the following are equivalent
\[\begin{array}{r}
b_j'\in N_\alpha(u)\\
b_j'\circ \cdots \circ b_1'\mbox{ is $\alpha$-good}\\

b_j\circ \cdots \circ b_1\mbox{ is $\alpha$-good}\\
b_1\circ \cdots \circ b_j\mbox{ is $\alpha$-good}\\
b_{j+1}\circ \cdots \circ b_n\circ e\circ b'\mbox{ is $\alpha$-good}\\
b_j\circ b_{j+1}\circ \cdots \circ b_n\circ e\circ b'\mbox{ is $\alpha$-good}\\
b_j\in N_\alpha(u).\end{array}\]
\end{proof}

We can now make progress in the case where $s(x_1)=\alpha$ and  $a\circ x$ is $\alpha$-good.

\begin{Cor}\label{case1}
 Suppose that $s(x_1)=\alpha$ and  $a\circ x$ is $\alpha$-good. Then for any $j\in \{ 1,\cdots ,n\}$ we have
$b_j'\in N_\alpha(w)$ if and only if $b_j\in N_\alpha(w)$.
\end{Cor}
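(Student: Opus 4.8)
The plan is to reduce the statement to Lemma~\ref{lemcase1}, whose hypothesis is that $u$ be $\alpha$-good; the real content is therefore to verify that $u$ is $\alpha$-good, after which the defining conditions for membership of $N_\alpha(w)$ collapse onto those for $N_\alpha(u)$.

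First I would show that $u$ is $\alpha$-good. Suppose not. Since $u=b\circ e\circ b'$ is reduced, Definition~\ref{defn of i-good} gives some $\beta\in s(u)$ with $\beta\neq\alpha$ and $(\beta,\alpha)\notin E$. Let $r$ be a reduced form of $a\circ x$; as $a\circ x$ is $\alpha$-good, every vertex of $s(r)$ equals $\alpha$ or is joined to it, so $\beta\notin s(r)$. Now $[w]=[u][a\circ x]=[u\circ r]$, and since $u$ is reduced with $\beta\in s(u)$ but $\beta\notin s(r)$, the dual of Corollary~\ref{product of reduced} forces $\beta$ into the support of every reduced form of $u\circ r$, hence of $w$. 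But $[w]=[a\circ x]$ is $\alpha$-good, so no vertex such as $\beta$ can appear, a contradiction. Thus $u$ is $\alpha$-good.

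Next I would transfer the $N_\alpha$-conditions from $w$ to $u$. Fix $j$. Because $b_j'b_j$ is an identity of $M_{s(b_j)}$ we have $s(b_j)=s(b_j')$, so the support requirement $s(b_j)=\alpha$ is the same for $b_j$ and $b_j'$. Writing $w=b\circ e\circ b'\circ a\circ x$, the suffix of $w$ beginning at $b_j$ is $(b_j\circ\cdots\circ b_n\circ e\circ b')\circ(a\circ x)$; since $a\circ x$ is an $\alpha$-good suffix, Lemma~\ref{i-good}(i) shows it is $\alpha$-good exactly when $b_j\circ\cdots\circ b_n\circ e\circ b'$ is, that is, exactly when $b_j\in N_\alpha(u)$. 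Hence $b_j\in N_\alpha(w)$ iff $b_j\in N_\alpha(u)$. The same argument applied to the suffix $(b_j'\circ\cdots\circ b_1')\circ(a\circ x)$ gives $b_j'\in N_\alpha(w)$ iff $b_j'\in N_\alpha(u)$.

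Finally, since $u$ is now known to be $\alpha$-good, Lemma~\ref{lemcase1} yields $b_j'\in N_\alpha(u)$ iff $b_j\in N_\alpha(u)$, and chaining the three equivalences gives $b_j'\in N_\alpha(w)$ iff $b_j\in N_\alpha(w)$. I expect the first step to be the main obstacle: one must exclude the possibility that a vertex $\beta$ violating $\alpha$-goodness, present in $u$, is cancelled when forming the product $w$. Passing from $a\circ x$ to its reduced form $r$ (so that $\beta$ is genuinely absent from $s(r)$) is exactly what lets the dual of Corollary~\ref{product of reduced} apply and rule this out.
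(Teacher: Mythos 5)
Your proposal is correct and follows essentially the same route as the paper: show $u$ is $\alpha$-good, observe that for letters of $u$ membership in $N_\alpha(w)$ and $N_\alpha(u)$ coincide because the $\alpha$-good suffix $a\circ x$ can be stripped off via Lemma~\ref{i-good}, and then invoke Lemma~\ref{lemcase1}. The only cosmetic difference is that you establish the $\alpha$-goodness of $u$ by unfolding the argument with the dual of Corollary~\ref{product of reduced}, whereas the paper cites Lemma~\ref{i-good} directly (whose proof is exactly that argument).
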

\begin{proof} Since $a\circ x$ is $\alpha$-good, so is
$u\circ a\circ x$ and hence from  Lemma~\ref{i-good} so is $u$. Moreover (with substantial abuse of notation),
$z\in N_\alpha(u)$ if and only if $z\in N_\alpha(w)$, for any letter $z$ of $u$. The result follows from Lemma~\ref{lemcase1}.
\end{proof}

Without the assumption that $a\circ x$ is $\alpha$-good, our analysis of the elements of $N_\alpha(w)$ becomes more delicate. We remark that in what follows, we could replace the suffix $a\circ x$ of $w$ by any word $v$ and the same argument would apply to $u\circ v$ as it does to $w$.

\begin{lemma}\label{case2} Let $\alpha\in V$. If $b_j'\notin N_\alpha(w)$ for all $1\leq j\leq n$, then
$b_j\notin  N_\alpha(w)$ for all $1\leq j\leq n$.
\end{lemma}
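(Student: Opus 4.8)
The plan is to argue by induction on $n=|b|$, the common length of $b$ and $b'$, exploiting the remark preceding the lemma that the suffix $a\circ x$ of $w$ may be replaced by an arbitrary word $v$; thus I prove the statement for $w=u\circ v$ whenever $[u][v]=[v]$. The idea is to peel off the outer pair $(b_1,b_1')$: set $\hat u=b_2\circ\cdots\circ b_n\circ e\circ b_n'\circ\cdots\circ b_2'$, which is $u$ with its first and last letters deleted and hence is again a reduced standard-form idempotent (Definition~\ref{standard idempotent}) of shorter $b$-length, and put $v'=b_1'\circ v$. The one computation that makes the induction go through is that $[\hat u][v']=[v']$: from $[u][v]=[b_1][\hat u][b_1'][v]=[v]$ and the fact that $b_1'b_1$ is an identity (so $[b_1']$ is a left inverse of $[b_1]$), left-multiplying by $[b_1']$ yields $[\hat u][b_1'\circ v]=[b_1'\circ v]$.

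Since $w=b_1\circ\hat w$ where $\hat w=\hat u\circ v'$, every letter of $\hat w$ has the same suffix in $w$ as it has in $\hat w$, so $z\in N_\alpha(w)$ if and only if $z\in N_\alpha(\hat w)$ for every letter $z$ of $\hat w$; this covers all of $b_2,\dots,b_n$ and all of $b_1',\dots,b_n'$. Consequently the global hypothesis $b_j'\notin N_\alpha(w)$ (for $1\le j\le n$) feeds, for $2\le j\le n$, into the inductive hypothesis applied to $\hat u$ and tail $v'$, and returns $b_j\notin N_\alpha(\hat w)$, i.e. $b_j\notin N_\alpha(w)$, for all $2\le j\le n$. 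It therefore remains only to establish the single statement $b_1\notin N_\alpha(w)$; the base case $n=0$ is vacuous.

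To dispatch $b_1$ I would reduce to the already-settled $\alpha$-good situation. Suppose $b_1\in N_\alpha(w)$. Then $s(b_1)=\alpha$ and, $b_1$ being the first letter of $w$, the relevant suffix is all of $w$, so $w$ is $\alpha$-good; as $[w]=[a\circ x]$ the tail $a\circ x$ is $\alpha$-good, and then Lemma~\ref{i-good}{\rm(i)} forces $u$ itself to be $\alpha$-good. This is exactly the hypothesis of Lemma~\ref{lemcase1}, and (the tail being $\alpha$-good) the membership of a letter of $u$ in $N_\alpha(w)$ coincides with its membership in $N_\alpha(u)$, just as in the proof of Corollary~\ref{case1}. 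Hence $b_1\in N_\alpha(w)$ gives $b_1\in N_\alpha(u)$, whence $b_1'\in N_\alpha(u)$ by Lemma~\ref{lemcase1}, and so $b_1'\in N_\alpha(w)$, contradicting the hypothesis. I expect the main obstacle, and the step needing the most care, to be the bookkeeping that legitimises applying the inductive hypothesis to the enlarged tail $v'=b_1'\circ v$: one must check both that $[\hat u][v']=[v']$ (which is precisely where the idempotent identity $[u][v]=[v]$ and the pairing $b_1'b_1=$ identity are used) and that prepending $b_1$ does not disturb any of the suffixes governing $N_\alpha$, together with the clean transfer ``$w$ $\alpha$-good $\Rightarrow u$ $\alpha$-good'' that unlocks Lemma~\ref{lemcase1}.
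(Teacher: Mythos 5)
Your proof is correct, and it takes a genuinely different route from the paper's. The paper argues directly by contradiction: assuming some $b_j\in N_\alpha(w)$ while every $b_k'\notin N_\alpha(w)$, it first shows via $S$-reducibility (Lemma~\ref{product reduction 2}) and the reducedness of $b\circ e\circ b'$ that $b'\circ a\circ x$ must be $\alpha$-good, and then uses support-tracking (Corollary~\ref{product of reduced}) to locate an index $l>j$ with $s(b_l')=\beta$ and $(\alpha,\beta)\notin E$ whose partner $b_l$ destroys the $\alpha$-goodness of the suffix beginning at $b_j$; notably, that argument never invokes the hypothesis $[u][a\circ x]=[a\circ x]$. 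You instead induct on $|b|$, peeling off the conjugating pair $(b_1,b_1')$, and the hypothesis is exactly what makes this work: it yields $[\hat u][b_1'\circ v]=[b_1'\circ v]$, so the inductive hypothesis applies to the shorter standard-form idempotent $\hat u$ with the enlarged tail, and in the residual case $b_1\in N_\alpha(w)$ it gives $[w]=[v]$, whence $v$ and then (by Lemma~\ref{i-good}) $u$ are $\alpha$-good, so that Lemma~\ref{lemcase1} together with the transfer used in Corollary~\ref{case1} forces $b_1'\in N_\alpha(w)$, a contradiction. Your bookkeeping checks out: $\hat u$ is again a reduced standard-form idempotent, $\hat w=\hat u\circ b_1'\circ v$ is literally $w$ with its first letter deleted so $N_\alpha$-membership of its letters is unaffected, and the passage to an arbitrary tail $v$ (licensed by the remark preceding the lemma) is genuinely necessary, since $b_1'\circ a\circ x$ need not satisfy conditions (a)--(c) ($b_1'$ is right invertible but need not be left invertible). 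What your route buys is a lighter argument that funnels everything into the already-settled $\alpha$-good case and avoids the delicate reduced-form analysis; what it costs is dependence on $[u][v]=[v]$, so the paper's proof establishes a marginally stronger statement (valid for any suffix, whether or not $[u]$ fixes it), though the lemma is only ever applied when that hypothesis holds.
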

\begin{proof} If $\alpha\notin s(b)$ there is nothing to show. Otherwise, let $h$ be greatest such that $s(b_h')=\alpha$, so that
\[v=b_h'\circ b_{h-1}'\circ\cdots \circ b_1'\circ a\circ x\]
is not $\alpha$-good.  Suppose that there exist some $b_j\in N_\alpha(w)$, so that
\[ z =  b_j\circ\cdots \circ b_n\circ e\circ b_n'\circ\cdots \circ b_1'\circ a\circ x\] is  $\alpha$-good. Notice that $j\leq h$.

Suppose for contradiction that $b'\circ a\circ x$ is not $\alpha$-good. Then neither is
$e\circ b'\circ a\circ x$. To see this, let $y=y_1\circ \cdots \circ y_r$ be a reduced form of $b'\circ a\circ x$, so that $y$ is not $\alpha$-good. Notice that a product $pq$ of two elements $p,q$ in the same vertex monoid with at least one of $p,q$ being a non-identity idempotent cannot be the identity, so that using Lemma~\ref{product reduction} iteratively we see that   $e\circ y$ is $S$-reducible. {It follows from Lemma \ref{product reduction 2} that $e\circ y$ reduces to
$$q_1\circ \cdots \circ q_m\circ y'$$ where for all $1\leq t\leq m$, $q_t=e_t$ or $q_t=e_ty_{r_t}$
 for some distinct indices
$ r_t$,   and  $y'$ is the word obtained from $y$ by deleting the letters $y_{r_t }$.} Clearly, $s(y)\subseteq s(q_1\circ \cdots \circ q_m\circ y')$, implying that $q_1\circ \cdots \circ q_m\circ y'$ is not $\alpha$-good, and hence neither is $e\circ b'\circ a\circ x$.

By assumption,
\[ z' =  b_j\circ\cdots \circ b_n\circ q_1\circ \cdots \circ q_m\circ y'\] is $\alpha$-good. We next claim that it is a reduced form. Since $s(q_t)=s(e_t)$ for $1\leq t\leq m$ and $b_j\circ\cdots \circ b_n\circ e_1\circ \cdots \circ e_m$ is a reduced form, we deduce that $b_j\circ\cdots \circ b_n\circ q_1\circ \cdots \circ q_m$ is also  reduced  by Remark~\ref{observations for Fountainicity}.   Further, it is impossible to shuffle some $b_t$ ($j\leq t\leq n$) in $z'$ and glue it to some letter in $y'$, as this would imply that in the reduced form $b_j\circ \cdots \circ b_n\circ e\circ b_n'\circ \cdots \circ b_j'$ we may shuffle  $b_t$ and glue it to $b_t'$, contradicting the fact $b\circ e\circ b'$ is  reduced.  Thus $z'$ is indeed reduced. Since $q_1\circ \cdots \circ q_m\circ y'$ is not $\alpha$-good, neither is $z'$, contradicting the fact that $[z]=[z']$ and $b_j\in N_\alpha(w)$.

We have shown that  $b'\circ a\circ x$ must  be   $\alpha$-good.  Since $v$
is not $\alpha$-good, there exists $\beta \neq \alpha$ in  the support of the reduced form of $v$  such that $(\alpha, \beta)\not \in E$. On the other hand,  $b'\circ a\circ x$
and hence $b_n'\circ\cdots \circ b_{h+1}'\circ v$ are  $\alpha$-good,
Corollary~\ref{product of reduced}  forces   there to  be some $l$ with $h< l\leq n$ such that $s(b_l')=\beta$. Since $s(b_l')=s(b_l)$ and $h\geq j$, and
$z'$  is a reduced form,
we have that $z$ is not $\alpha$-good, which again
  contradicts our initial assumption that $b_j\in N_\alpha(w)$.
\end{proof}

 We can now show that, given
$[u\circ a \circ x]=[a\circ x]$, we can replace a letter of $x$ by any corresponding element in the same $\art$-class in the relevant vertex monoid. Note that it may be we replace a letter {\em not in } $I$ by a letter {\em in} $I$. It is for this reason that our set-up for $a\circ x$ is so delicate.

\begin{lemma}\label{key-F} Let $s(x_1)=\alpha$ and let
$\tilde{x}=x_1'\circ x_2\circ\cdots\circ x_k$ where $x_1'\in M_\alpha$ is chosen so that
$x_1\,\widetilde{\mathcal{R}}\, x_1'$ in $M_\alpha$.
Then  $$[u][a\circ x]=[a\circ x]$$ implies that
$$[u][a\circ \tilde{x}]=[a\circ \tilde{x}].$$
\end{lemma}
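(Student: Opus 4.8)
The plan is to project the hypothesis onto the single vertex monoid $M_\alpha$ by means of the maps of Lemma~\ref{phi}, collapse the whole statement to one implication about how an idempotent of $M_\alpha$ acts on $x_1$, and then read off the conclusion from $x_1\,\art\,x_1'$. First I would pin down the shape of $N_\alpha(w)$ for $w=u\circ a\circ x$. Since $s(x)$ is complete and $s(x_1)=\alpha$, the block $x$ is $\alpha$-good, so $x_1\in N_\alpha(w)$; as $x_2,\dots,x_k$ have support different from $\alpha$, the letter $x_1$ is the \emph{last} element of $N_\alpha(w)$. By the argument of Lemma~\ref{key5}(iii), condition (c) prevents any letter of $a$ from lying in $N_\alpha(w)$, so the elements of $N_\alpha(w)$ are exactly the support-$\alpha$ letters of $u$ that lie in $N_\alpha(w)$, say $c_1,\dots,c_p\in M_\alpha$ in order, followed by $x_1$. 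Put $g=c_1\cdots c_p\in M_\alpha$.

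Applying the well-defined maps $\overline{\phi}_\alpha$ and $\overline{\psi}_\alpha$ to the hypothesis $[w]=[u][a\circ x]=[a\circ x]$, and using $N_\alpha(a\circ x)=\{x_1\}$ from Lemma~\ref{key5}(iii), I obtain $[w']=[a\circ x_2\circ\cdots\circ x_k]$ (where $w'=w\psi_\alpha$) and $[c_1\circ\cdots\circ c_p\circ x_1]=[x_1]$; since these last words lie in the image of $M_\alpha$, Remark~\ref{rem:retract} turns the second equality into $gx_1=x_1$ in $M_\alpha$. Now I feed $x_1'$ through the same machine: by the ``Further'' clause of Lemma~\ref{product}, replacing the final $N_\alpha(w)$-letter $x_1$ by $x_1'\in M_\alpha$ gives $[\tilde w]=[w']\,[c_1\circ\cdots\circ c_p\circ x_1']$, and the same clause applied to $a\circ x$ gives $[a\circ\tilde x]=[a\circ x_2\circ\cdots\circ x_k]\,[x_1']$ (note $a\circ\tilde x$ has the same form as $a\circ x$ by Lemma~\ref{key5}(i)). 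Combining with $[w']=[a\circ x_2\circ\cdots\circ x_k]$, the target equality $[\tilde w]=[a\circ\tilde x]$ becomes equivalent to $[c_1\circ\cdots\circ c_p\circ x_1']=[x_1']$, i.e.\ to $gx_1'=x_1'$ in $M_\alpha$. Thus the entire lemma reduces to the implication $gx_1=x_1\Rightarrow gx_1'=x_1'$.

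If $g$ is idempotent this implication is immediate: taking the idempotent $g$ in the defining condition of $\art$ for $x_1\,\art\,x_1'$ gives $gx_1=x_1\Leftrightarrow gx_1'=x_1'$. So everything rests on showing $g$ is idempotent, and here I would split on whether $a\circ x$ is $\alpha$-good. If it is, Corollary~\ref{case1} shows that the $\alpha$-letters $b_j$ and their partners $b_j'$ enter $N_\alpha(w)$ in matched pairs; together with the unique (idempotent) $\alpha$-component $e_{i_0}$ of $e$ when it occurs, $g$ acquires the nested form $b_{j_1}\cdots b_{j_s}\,f\,b_{j_s}'\cdots b_{j_1}'$ with $f$ idempotent and each $b_{j_r}'b_{j_r}=1_\alpha$. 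A short induction peeling off the outermost pair, using the identity $(b_{j_r}f'b_{j_r}')^2=b_{j_r}f'(b_{j_r}'b_{j_r})f'b_{j_r}'=b_{j_r}f'b_{j_r}'$, then shows $g$ is idempotent.

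The remaining case, $a\circ x$ not $\alpha$-good, is the one I expect to be the main obstacle, since it admits no clean pairing. My plan is to show that then no $b_j'$ can lie in $N_\alpha(w)$: a support letter $\beta\neq\alpha$ of the reduced form of $a\circ x$ with $(\beta,\alpha)\notin E$ (which must come from $a$, as $x$ is $\alpha$-good) has to survive into any suffix that could be $\alpha$-good, and controlling this persistence is exactly the support-tracking carried out by Lemma~\ref{case2}, via Lemma~\ref{haha} and Corollary~\ref{connected i-good}. Once no $b_j'$ survives, Lemma~\ref{case2} eliminates every $b_j$ as well, so $g$ collapses to $e_{i_0}$ or to $1_\alpha$, which is idempotent. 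This is precisely the delicate situation the reduction lemmas of this section were built to handle, and verifying that the $b'$-letters genuinely drop out (rather than cancelling in some unexpected way against $a$) is where the care is needed.
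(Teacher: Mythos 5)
Your reduction of the lemma to the single implication $gx_1=x_1\Rightarrow gx_1'=x_1'$ in $M_\alpha$, via $\overline{\phi}_\alpha$, $\overline{\psi}_\alpha$ and the ``Further'' clause of Lemma~\ref{product}, is sound and is exactly what the paper does; likewise your treatment of the case where $a\circ x$ is $\alpha$-good (matched pairs $b_j,b_j'$ via Corollary~\ref{case1}, nested idempotent) and of the case where no $b_j'$ lies in $N_\alpha(w)$ (Lemma~\ref{case2} then kills every $b_j$, so $g$ is $e_h$ or trivial).

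The gap is in your final paragraph. You propose to show that when $a\circ x$ is not $\alpha$-good, no $b_j'$ can lie in $N_\alpha(w)$, so that Lemma~\ref{case2} applies. That claim is false in general, and Lemma~\ref{case2} gives you no help here: its hypothesis is precisely ``no $b_j'\in N_\alpha(w)$'', which is not a consequence of $a\circ x$ failing to be $\alpha$-good. The obstruction is the one you yourself flag and then dismiss: a letter $b_i'$ of $b'$ can cancel a letter $a_j$ of $a$ (the letters of $a$ are left invertible, and nothing prevents $b_i'a_j$ from being an identity even though $b_i'$ was introduced as a right inverse of $b_i$). Such a cancellation removes from the suffix $b_j'\circ\cdots\circ b_1'\circ a\circ x$ exactly the ``bad'' supports $\beta$ with $(\beta,\alpha)\notin E$ that witness the failure of $\alpha$-goodness of $a\circ x$, so that suffix can perfectly well be $\alpha$-good and some $b_j'$ with $s(b_j')=\alpha$ can enter $N_\alpha(w)$. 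In that situation $g$ has no a priori idempotent structure and your argument stops. The paper handles this third case by a genuinely different device: an induction on $|u|$. Using Corollary~\ref{H-2} and Lemma~\ref{product reduction 2} it locates (after shuffling) a pair $b_1',a_1$ with $b_1'a_1$ an identity, strips $b_1$ and $b_1'\circ a_1$ off both sides of the hypothesis to obtain an instance of the lemma for the shorter idempotent $b_2\circ\cdots\circ b_n\circ e\circ b_n'\circ\cdots\circ b_2'$ and the word $a_2\circ\cdots\circ a_l\circ x$ (which is why the loose conditions (a)--(c) on $a\circ x$ are set up as they are), applies the inductive hypothesis, and then reinstates $b_1$ and $b_1'\circ a_1$, using Lemma~\ref{block} to recover $[a]=[b_1\circ a_2\circ\cdots\circ a_l]$. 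Without some argument of this kind your proof does not cover all cases.
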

\begin{proof}
 If $a\circ x$ is $\alpha$-good, then by Corollary \ref{case1} and Lemma~\ref{key5} (iii)
$$N_\alpha(w)=\{b_{t_1}, \cdots, b_{t_r}, e_h, b_{t_r}^{'}, \cdots, b_{t_1}^{'}, x_1\} \mbox{~or~} N_\alpha(w)=\{b_{t_1}, \cdots, b_{t_r}, b_{t_r}^{'}, \cdots, b_{t_1}^{'}, x_1\}$$ for some  $0\leq r\leq n$ and $1\leq t_1< \cdots< t_r\leq n$ and $1\leq h\leq m$.
Whether or not $a\circ x$ is $\alpha$-good,  in the case where $b_j'\notin N_\alpha(u\circ a\circ x)$ for all
$1\leq j\leq n$, we have that $b_j\not\in N_\alpha(u\circ a\circ x)$ for all $1\leq j\leq n$, by Lemma \ref{case2}, so that $N_\alpha(u\circ a\circ x)$ equals either $\{e_h, x_1\}$ or $\{x_1\}$ for some $1\leq h\leq m$.

In either of these two special cases, let $f$ be the idempotent $b_{t_1}\cdots b_{t_r}e_hb_{t_r}^{'}\cdots b_{t_1}^{'}$ or  $b_{t_1}\cdots b_{t_r}b_{t_r}^{'}\cdots b_{t_1}^{'}$;  note that we could have $f=\epsilon$.  Then by Lemma \ref{product},
$$[u][a\circ x]=[u][a\circ x_1\circ \cdots \circ x_k]=[w'][f\circ x_1], \mbox{ or }[x_1]\mbox{ if }f=\epsilon,$$
where $w'$ is the word obtained from $w$ by deleting all letters in $N_\alpha(w).$   By replacing the first letter $x_1$ of $x$ by $x_1'$ in $u\circ a\circ x$, we have $$[u][a\circ x_1'\circ x_2\circ \cdots \circ x_k]=[w'][f\circ x_1'], \mbox{ or }[x_1],$$ again by Lemma \ref{product}.

On the other hand, by applying the maps $\overline{\phi}_\alpha$ and $\overline{\psi}_\alpha$ to $[u][a\circ x]$ and $[a\circ x]$, we have $[w']=[(a\circ x)']$ and $[f\circ  x_1]=[x_1]$ (if $f\neq \epsilon$) where $(a\circ x)'$ is the word obtained from $a\circ x$ by deleting the first letter $x_1$ of $x$. The latter gives $fx_1=x_1$ in $M_\alpha$ (if $f\neq \epsilon$). If  $f\in M_\alpha$ is idempotent, then given $x_1~\widetilde{\mathcal{R}}~x_1'$ in $M_\alpha$,
we have $fx_1'=x_1'$. Therefore $$[u][a\circ x_1'\circ x_2\circ \cdots \circ x_k]=[(a\circ x)'][x_1']=[a\circ x_1'\circ x_2\circ \cdots \circ x_k]$$ so that \[[u][a\circ \tilde{x}]=[a\circ\tilde{x}].\]

 We now proceed by induction on the length of $u$. If $|u|=1$, then
$u=e_1$ for some non-identity idempotent $e_1$ from a vertex monoid. Clearly
$b_j'\notin N_\alpha(w)$ for all $j\in \{ 1,\hdots ,n\}$ so that if
$[u][a\circ x]=[a\circ x]$, then $[u][a\circ \tilde{x}]=[a\circ \tilde{x}]$,  by the above.

Suppose now that $1<|u|$ and the result is true for all idempotents having length less than $u$, when written in standard form. By the above we only need to consider the case where
 $a\circ x$ is not $\alpha$-good and there exists some $b_j'\in N_\alpha(u\circ a\circ x)$. We  pick  $j$ to be smallest such index. Then
$b_{j-1}'\circ \cdots \circ b_1'\circ a\circ x$ is $\alpha$-good.  Since $x$ is $\alpha$-good we have
$b_{j-1}'\circ \cdots\circ b_1'\circ a$ is $\alpha$-good and since $a\circ x$ is not $\alpha$-good we also have that $a$ is not $\alpha$-good. We see from Corollary~\ref{H-2} that $b_{j-1}'\circ \cdots \circ b_1'\circ a$ is not $S$-reducible. There must therefore be a smallest $t$  such that  $b_{t}'\circ \cdots \circ b_1'\circ a$ is $S$-reducible, but $b_{t+1}'\circ b_t'\circ \cdots \circ b_1'\circ a$ is not. By Lemma \ref{product reduction 2}, we know $b_{t}'\circ \cdots \circ b_1'\circ a$
shuffles to some  $$p_t\circ \cdots \circ p_1\circ a'$$  and reduces to
a reduced form $$q_t\circ \cdots \circ q_1\circ a'$$ where for all $1\leq r\leq t$ we have $q_r=b_r'=p_t$ or  $p_t=b_r'\circ a_{r_j}$ and $q_r=b_r'a_{r_j}$,
 for some distinct indices
$r_j\in \{ 1,\hdots ,l\}$, and  $a'$ is the word obtained from $a$ by deleting the letters $a_{r_j}$.

Now consider the reduced form of
$$b_{t+1}'\circ q_t\circ \cdots \circ q_1\circ a'
\mbox{ or, equivalently, } b_{t+1}'\circ p_t\circ \cdots \circ p_1\circ a'.$$  Since $s(q_r)=s(b_r')=s(p_r)$ for all $1\leq r\leq t$ and $b_{t+1}'\circ b_t'\circ \cdots \circ b_1'$ is a reduced form, we have that $b_{t+1}'\circ q_t\circ \cdots \circ q_1$ is a reduced form. As   $b_{t+1}'\circ b_t\circ \cdots \circ b_1\circ a$ and hence $b_{t+1}'\circ q_t\circ \cdots \circ q_1\circ a'$
is not $S$-reducible, there must be a letter $a_{r_{t+1}}$ in $a'$ such that $s(b'_{t+1})=s(a_{r_{t+1}})$, $b_{t+1}'a_{r_{t+1}}$ is an identity and such that
we must be able to  shuffle  $a_{r_{t+1}}$  to the front of $q_t\circ \cdots \circ q_1\circ a'$. Note that we can therefore also shuffle
$a_{r_{t+1}}$  to the front of $p_t\circ \cdots \circ p_1\circ a'$  and hence  to the front of $a$, and
$b_{t+1}'$ to the right of $p_t\circ\cdots \circ p_1$ and hence to the right of $b_t'\circ\cdots \circ b_1'$. We can therefore assume that
$t+1=1=r_{t+1}$ so that $b_1'a_1$ is an identity.

We now have
\[[u\circ a\circ x]=[b_1\circ\cdots \circ b_n\circ e\circ b_n'\circ\cdots \circ b_2'\circ a_2\cdots \circ a_l\circ x]=
[a\circ x]\]
so that multiplying by $[b_1']$ on the left we have
\begin{equation}\label{eqn5}[b_2\circ \cdots \circ b_n\circ  e\circ b_n'\circ\cdots \circ b_2'][a_2\cdots \circ a_l\circ x]
=[a_2\circ\cdots \circ a_l\circ x].\end{equation}
We note that $ a_2\cdots \circ a_l\circ x$ is a word of the correct form for us to apply our  inductive assumption, which  gives us that

\begin{equation}\label{eqn6} [b_2\circ \cdots \circ b_n\circ  e\circ b_n'\circ\cdots \circ b_2'][ a_2\cdots \circ a_l\circ \tilde{x}]=
[a_2\circ\cdots \circ a_l\circ \tilde{x}].\end{equation}
Now multiplying Equation~(\ref{eqn6}) by $[b_1]$ on the left and re-instating $b_1'\circ a_1$ we obtain
\[ [u\circ a\circ \tilde{x}]=[b_1\circ a_2\circ \cdots \circ a_l\circ \tilde{x}].\]
But multiplying Equation~(\ref{eqn5}) by $[b_1]$ on the left and re-instating $b_1'\circ a_1$ we also obtain
\[[u\circ a \circ x]=[a\circ x]=[b_1\circ a_2\circ\cdots \circ a_l\circ x].\]
 Let $x'$ be the word obtained from $x$ by deleting letters which are identities. Then \[[u\circ a \circ x']=[a\circ x']=[b_1\circ a_2\circ\cdots \circ a_l\circ x'].\]  Since $a\circ x'$ is a reduced form by Lemma \ref{key5} (ii) and $|b_1\circ a_2\circ \cdots \circ a_l\circ x'|=|a\circ x'|$,  we deduce that $b_1\circ a_2\circ \cdots\circ a_l\circ x'$ is a reduced form, so that
$[a]=[b_1\circ a_2\circ \cdots \circ a_l]$ by Lemma \ref{block}. Therefore,
\[ [u\circ a\circ \tilde{x}]= [a\circ \tilde{x}].\]
\end{proof}

\begin{Cor}\label{Rtilde} Suppose that for each $1\leq j\leq k$
we have $x_j'\in M_{s(x_j)}$ such that
$x_j\,\widetilde{\mathcal{R}}\, x_j'$ in $M_{s(x_j)}$. Let
$\bar{x}=x_1'\circ x_2'\circ\cdots\circ x_k'$. Then
\[[a\circ x]\,\widetilde{\mathcal{R}}\, [a\circ \bar{x}].\]
\end{Cor}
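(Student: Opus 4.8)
The plan is to deduce the corollary from Lemma~\ref{key-F} by altering the letters of $x$ one at a time. Since $\widetilde{\mathcal{R}}$ is an equivalence relation, I would interpolate between $x$ and $\bar{x}$ through the words
\[x^{(0)}=x,\qquad x^{(j)}=x_1'\circ\cdots\circ x_j'\circ x_{j+1}\circ\cdots\circ x_k\ \ (1\le j\le k),\qquad x^{(k)}=\bar{x},\]
and show $[a\circ x^{(j-1)}]\,\widetilde{\mathcal{R}}\,[a\circ x^{(j)}]$ for each $j$; transitivity of $\widetilde{\mathcal{R}}$ then delivers $[a\circ x]\,\widetilde{\mathcal{R}}\,[a\circ\bar{x}]$. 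The crucial observation is that every $a\circ x^{(j)}$ is again of the admissible form required by Lemma~\ref{key-F}: conditions (a) and (c) involve only $a$ and the \emph{set} $s(x)$, while (b) persists because $x_i'\in M_{s(x_i)}$ leaves $s(x^{(j)})=s(x)$ complete with pairwise distinct supports. (Note that $x_i'$ may well lie in $I$, which is precisely what the deliberately loose set-up for $a\circ x$ is designed to accommodate.)

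The single-letter step is where I would exploit completeness of $s(x)$. To pass from $x^{(j-1)}$ to $x^{(j)}$, I would shuffle the letter $x_j$ to the front of the $x$-part: as $s(x)$ is complete and its letters have distinct supports, they pairwise commute, so $x^{(j-1)}$ shuffles to a word $y$ with first letter $x_j$, giving $[a\circ x^{(j-1)}]=[a\circ y]$ with $a\circ y$ still satisfying (a)--(c). Writing $s(x_j)=\beta$ and recalling $x_j\,\widetilde{\mathcal{R}}\,x_j'$ in $M_\beta$, Lemma~\ref{key-F} (with $\beta$ in the role of $\alpha$) gives that for every idempotent $[u]$ in standard form with $[u][a\circ y]=[a\circ y]$ one has $[u][a\circ\tilde{y}]=[a\circ\tilde{y}]$, where $\tilde{y}$ is $y$ with its leading letter replaced by $x_j'$; shuffling back yields $[a\circ\tilde{y}]=[a\circ x^{(j)}]$. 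By Lemma~\ref{idempotent} every idempotent of $\mathscr{GP}$ admits a standard form, so this covers all idempotents and establishes the forward implication $[u][a\circ x^{(j-1)}]=[a\circ x^{(j-1)}]\Rightarrow[u][a\circ x^{(j)}]=[a\circ x^{(j)}]$.

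For the reverse implication I would invoke the symmetry of $\widetilde{\mathcal{R}}$: since $x_j'\,\widetilde{\mathcal{R}}\,x_j$ and $a\circ x^{(j)}$ is itself of the admissible form, Lemma~\ref{key-F} applies verbatim with the roles of $x_j$ and $x_j'$ interchanged, giving $[u][a\circ x^{(j)}]=[a\circ x^{(j)}]\Rightarrow[u][a\circ x^{(j-1)}]=[a\circ x^{(j-1)}]$. Combining the two implications yields $[a\circ x^{(j-1)}]\,\widetilde{\mathcal{R}}\,[a\circ x^{(j)}]$, and chaining over $j=1,\dots,k$ completes the argument.

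I expect the main obstacle here to be bookkeeping rather than conceptual: one must verify carefully that the admissible form (a)--(c) is genuinely preserved under both the shuffle-to-front and the letter replacement (in particular when a non-identity letter is replaced by an identity), so that Lemma~\ref{key-F} may be legitimately reapplied in \emph{both} directions. No fresh combinatorial analysis of reductions is needed, as the heavy lifting has already been carried out in Lemma~\ref{key-F}.
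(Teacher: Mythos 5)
Your argument is correct and follows essentially the same route as the paper: iterate Lemma~\ref{key-F} one letter at a time, using the completeness of $s(x)$ to shuffle the relevant letter into first position (the paper instead shuffles the already-replaced letter to the back, which is the same manoeuvre), appealing to Lemma~\ref{key5}(i) to keep the admissible form, and obtaining the reverse implication by the symmetry of $\widetilde{\mathcal{R}}$.
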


\begin{proof}
Suppose that $$[u][a\circ x]=[a\circ x].$$ By Lemma \ref{key-F}, we have $$[u][a\circ x_1'\circ x_2\circ \cdots \circ x_k]=[a\circ x_1'\circ x_2\circ \cdots \circ x_k].$$ Clearly,  we may shuffle $x_1'$ to the back of $x_1'\circ x_2\circ \cdots \circ x_k$ and note
that, by Lemma \ref{key5} (i), $a\circ x_2\circ \cdots \circ x_k\circ x_1'$ is of the correct form to apply Lemma~\ref{key-F}.
By repeating this process, and reshuffling, we obtain $[u][a\circ \bar{x}]
=[a\circ \bar{x}]$.

Since $a\circ \bar{x}$ is of the same form as $a\circ x$, we may show that $[u][a\circ \bar{x}]
=[a\circ \bar{x}]$ implies $[u][a\circ x]=[a\circ x]$ by exactly the same arguments as above. Therefore, $[a\circ x]\,\widetilde{\mathcal{R}}\, [a\circ \bar{x}]$.
\end{proof}

We can now prove our second main result.

\begin{Thm}\label{thm:fountain}
The graph product $\mathscr{GP}=\mathscr{GP}(\Gamma,\mathcal{M})$ of left Fountain monoids $\mathcal{M}=\{M_\alpha: \alpha\in V\}$ with respect to $\Gamma$ is  a left Fountain monoid.
\end{Thm}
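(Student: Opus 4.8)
The plan is to mirror the three-stage strategy already used for the abundant case (Proposition~\ref{prop:mainprop} and Theorem~\ref{thm:mainm}), replacing $\ars$ by $\art$ throughout and drawing on the extra work of Section~\ref{sec:Fountainicity}. Let $[u]\in\mathscr{GP}$. By Lemma~\ref{decomposition} and Proposition~\ref{step-1}(2) I would write $[u]=[a][x]$, where $a\circ x$ is reduced, every letter of $a$ is left invertible, $|a|$ is maximum, and $x=x_1\circ\cdots\circ x_n$ is a left Foata normal form whose first block $x_1$ contains no left invertible letters; Proposition~\ref{step-1}(2) then gives $[u]\,\ars\,[a\circ x_1]$. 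Since $\ars\subseteq\art$ and $\art$ is an equivalence relation, this yields $[u]\,\art\,[a\circ x_1]$, so it suffices to exhibit an idempotent in the $\art$-class of $[a\circ x_1]$.

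Next I would check that $a\circ x_1$ has the form required to apply Corollary~\ref{Rtilde}, that is, conditions (a)--(c) preceding Lemma~\ref{key5}. Condition (a) is immediate. Condition (b) holds because $x_1$, being the first block of a left Foata normal form, is a reduced complete block, and hence (by the observation following Proposition~\ref{shuffle}) its letters have pairwise distinct supports. For condition (c), suppose for contradiction that some $a_j$ with $s(a_j)=\alpha\in s(x_1)$ satisfied $(s(a_j),s(a_t))\in E$ for all $t>j$; since $s(x_1)$ is a complete subgraph containing $\alpha$, every letter of $a\circ x_1$ lying strictly between $a_j$ and the unique letter of $x_1$ of support $\alpha$ would then have support adjacent to $\alpha$, contradicting the reducedness (Definition~\ref{def of reduced}) of $a\circ x_1$, which is a left factor of the reduced word $a\circ x$. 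Writing $x_1=z_1\circ\cdots\circ z_s$ and using that each vertex monoid $M_{s(z_j)}$ is left Fountain, I would choose idempotents $z_j^+\in M_{s(z_j)}$ with $z_j\,\art\,z_j^+$ in $M_{s(z_j)}$, and set $x_1^+=z_1^+\circ\cdots\circ z_s^+$. Corollary~\ref{Rtilde} then delivers $[a\circ x_1]\,\art\,[a\circ x_1^+]$, whence $[u]\,\art\,[a\circ x_1^+]$.

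The final stage is to absorb the invertible prefix $[a]$ into an idempotent. Since every letter of $a$ is left invertible, Lemma~\ref{invertible} supplies a left inverse $[a']$ of $[a]$, represented by $a'=a_l'\circ\cdots\circ a_1'$ with each $a_i'a_i$ an identity; I set $h=[a\circ x_1^+\circ a']$. As $[a'\circ a]=[\epsilon]$ and $[x_1^+\circ x_1^+]=[x_1^+]$ (the block $x_1^+$ consisting of commuting idempotents of distinct support), a short computation gives $h^2=h$; equivalently $h$ is idempotent by Lemma~\ref{idempotent}, being in the standard form of Definition~\ref{standard idempotent} with $b=a$, $e=x_1^+$, $b'=a'$. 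To see $[a\circ x_1^+]\,\art\,h$, I would verify the two conditions of Remark~\ref{rem:f}: first $h[a\circ x_1^+]=[a\circ x_1^+\circ a'\circ a\circ x_1^+]=[a\circ x_1^+\circ x_1^+]=[a\circ x_1^+]$; and second, for any idempotent $f$ with $f[a\circ x_1^+]=[a\circ x_1^+]$ one has $fh=(f[a\circ x_1^+])[a']=[a\circ x_1^+][a']=h$. Thus $[a\circ x_1^+]\,\art\,h$, and transitivity gives $[u]\,\art\,h$ with $h$ idempotent, so every element of $\mathscr{GP}$ is $\art$-related to an idempotent and $\mathscr{GP}$ is left Fountain.

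The hard part, granting Corollary~\ref{Rtilde}, is precisely this last stage. In the abundant case the analogous move exploited that $\ars$ is a left congruence, transporting $[\epsilon]\,\ars\,[a']$ across left multiplication by $[a][x_1^+]$; but $\art$ is not in general a left congruence, so that shortcut is unavailable here. Instead the passage to $h$ must be justified by the explicit check against Remark~\ref{rem:f} above, and it is to make both that check and the application of Corollary~\ref{Rtilde} legitimate that the verification of conditions (a)--(c) on $a\circ x_1$ has to be carried out with care.
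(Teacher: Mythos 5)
Your proposal is correct and takes essentially the same route as the paper: the same decomposition from Lemma~\ref{decomposition} and Proposition~\ref{step-1}, the same use of Corollary~\ref{Rtilde} to pass from $[a\circ x_1]$ to $[a\circ x_1^+]$, and the same candidate idempotent $[a][x_1^+][a']$. The only difference is the final step, where you verify $[a\circ x_1^+]\,\art\,[a][x_1^+][a']$ by hand via Remark~\ref{rem:f}; the paper instead observes that $[a']\,\ar\,[\epsilon]$ and that $\ar$ (as opposed to $\art$) \emph{is} a left congruence contained in $\art$, so the congruence shortcut you declare unavailable does in fact go through in this weaker form --- both arguments are sound.
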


\begin{proof}
Let $[w]\in \mathscr{GP}$. From Proposition~\ref{step-1} we may  write $[w]=[a][v]$, where  all letters contained in $a$ are left invertible, $a\circ v$ is a reduced form, and $v=v_1\circ \cdots \circ v_m$ is a left Foata normal form with blocks $v_i$, $1\leq i\leq m$, such that $v_1$ contains no left invertible letters; we prefer to use $v$ here since for convenience in this section we have been using $x$ to denote a single block.
Suppose that $v_1=x_1\circ\cdots \circ x_k=x$ and for each
$j\in \{ 1,\hdots ,k\}$ choose an idempotent
$x_j^+\in M_{s(x_j)}$ such that
$x_j~\widetilde{\mathcal{R}}~x_j^+$ in $ M_{s(x_j)}$. Let
$v_1^+=x_1^+\circ\cdots\circ x_k^+=\bar{x}$.
Let $[a']$ be a left inverse for $[a]$.  Using the fact that $\mathcal{R}$ and $\mathcal{R}^*$ are  left congruences contained in $\widetilde{\mathcal{R}}$, Proposition \ref{step-1} and Corollary \ref{Rtilde} give us that  $$[a][v]~\widetilde{\mathcal{R}}~[a][v_1]~\widetilde{\mathcal{R}}~[a][v_1^+]~\widetilde{\mathcal{R}}~[a][v_1^+][a'],$$
the final step following from the fact $[a']$, being right invertible,  is $\mathcal{R}$-related to the identity of $\mathscr{GP}$.
We have earlier seen that $[a][v_1^+][a']$ is an idempotent, so that
$\mathscr{GP}$ is indeed a left Fountain monoid.
\end{proof}

Of course, the left-right dual of Theorem ~\ref{thm:fountain} holds, and hence one may also deduce that the graph product of Fountain monoids is Fountain.

\section{Applications and open questions}\label{sec:application}

The aim of this section is to explore some applications of Theorems~\ref{thm:mainm} and  \ref{thm:fountain}.   Further, we will discuss some  open problems related to this work.

We make the following observation before re-obtaining one of the main results of \cite{fountain:2009}. If $M$ is a right cancellative monoid with identity 1 and $b\in M$ is a left inverse of $a\in M$, then $1a=a1=a(ba)=(ab)a$, giving  $1=ab$, so that $b$ is also a right inverse of $a$, and hence an inverse.

\begin{Cor}\cite[Theorem 1.5]{fountain:2009}\label{cor:canc}
The graph product $\mathscr{GP}=\mathscr{GP}(\Gamma,\mathcal{M})$ of right cancellative monoids $\mathcal{M}=\{M_\alpha: \alpha\in V\}$  with respect to $\Gamma$ is  right cancellative.
\end{Cor}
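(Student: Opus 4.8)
The plan is to deduce this directly from Theorem~\ref{thm:mainm} and the observation immediately preceding the statement, by showing that $\mathscr{GP}$ consists of a single $\ars$-class; recall that this is exactly the condition for right cancellativity. Since a right cancellative monoid is itself a single $\ars$-class, and is therefore left abundant, each vertex monoid $M_\alpha$ is left abundant, and Theorem~\ref{thm:mainm} at once gives that $\mathscr{GP}$ is left abundant.

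The heart of the matter is then to verify that $[\epsilon]$ is the \emph{only} idempotent of $\mathscr{GP}$. First I would use Lemma~\ref{idempotent} to write an arbitrary idempotent in standard form $[u]$ with $u = b\circ e\circ b'$, where $b = b_1\circ\cdots\circ b_n$, $b' = b_n'\circ\cdots\circ b_1'$, each $b_i'b_i$ is an identity, and each $e_i$ is an idempotent of a vertex monoid. Because right cancellative monoids have no non-identity idempotents, every $e_i$ is an identity and hence $[e] = [\epsilon]$. Each $b_i'$ is a left inverse of $b_i$ in its vertex monoid, so the observation preceding the statement upgrades this to $b_i'$ being the two-sided inverse of $b_i$; thus every $b_i$ is invertible. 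By Lemma~\ref{invertible} it follows that $[b] = [b_1\circ\cdots\circ b_n]$ is invertible in $\mathscr{GP}$ with inverse $[b'] = [b_n'\circ\cdots\circ b_1']$. Consequently
$$[u] = [b][e][b'] = [b][b'] = [\epsilon],$$
so $[\epsilon]$ is the unique idempotent.

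To conclude, I would combine these two facts. As $\mathscr{GP}$ is left abundant, every $\ars$-class contains an idempotent; since the only idempotent is $[\epsilon]$, every $\ars$-class contains $[\epsilon]$. But $\ars$ is an equivalence relation, so $[\epsilon]$ belongs to exactly one of its classes, forcing $\mathscr{GP}$ to be a single $\ars$-class. Hence $\mathscr{GP}$ is right cancellative.

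The step needing the most care is the collapse of the bracketing part $b\circ b'$ of a standard-form idempotent to $[\epsilon]$: this is precisely where right cancellativity is used in an essential way, first to kill the middle factor $e$ (no non-identity idempotents), and then, via the preceding observation, to promote the one-sided relations $b_i'b_i = 1$ to genuine invertibility so that Lemma~\ref{invertible} yields $[b][b'] = [\epsilon]$. The remaining deductions are purely formal manipulations with the characterisation of right cancellativity as a single $\ars$-class.
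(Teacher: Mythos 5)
Your proof is correct, but it takes a genuinely different route from the paper's. The paper argues directly: it feeds the decomposition $[u]=[a][v]$ into Proposition~\ref{prop:mainprop}, choosing each $z_j^+$ to be the identity of its vertex monoid (legitimate because a right cancellative monoid is a single $\ars$-class, so every element is $\ars$-related to the identity), and then uses the observation that left inverses are two-sided to collapse $[a][v_1^+][a']=[a][\epsilon][a']$ to $[\epsilon]$; thus every element is explicitly $\ars$-related to $[\epsilon]$ and the single-class characterisation finishes the job. You instead treat Theorem~\ref{thm:mainm} as a black box and supply the missing ingredient separately: via the standard form of Lemma~\ref{idempotent} you show that $[\epsilon]$ is the \emph{unique} idempotent of $\mathscr{GP}$, and then left abundancy forces a single $\ars$-class. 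The core computation is shared --- in both arguments the bracketed word $b\circ e\circ b'$ collapses because right cancellativity kills the idempotent core and promotes the $b_i'$ to two-sided inverses --- but your version isolates a clean intermediate fact of independent interest (a graph product of right cancellative monoids has exactly one idempotent), at the cost of invoking the full classification of idempotents, whereas the paper's version only needs to track the one specific idempotent its machinery produces. Two minor points of hygiene: since a standard form is required to be \emph{reduced}, no letter lies in $I$, so in your setting the middle word $e$ is forced to be empty rather than a product of identities (same conclusion, slightly cleaner justification); and the collapse $[b][b']=[\epsilon]$ follows directly from the reduction steps once each $b_ib_i'$ is an identity, so the appeal to Lemma~\ref{invertible} is not really needed there.
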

\begin{proof} In Proposition~\ref{prop:mainprop} we  take $z_j^+$ as the identity of the vertex monoid $M_{s(z_j)}$ for
each $1\leq j\leq s$.  By Lemma~\ref{invertible}, bearing in mind $[a]$ is a reduced form, we have that  $[a']$ as a product of left inverses (hence two-sided inverses) of the letters in $a$. Then
\[[u]\,\mathcal{R}^*\, [a][v_1^+][a']=[a][\epsilon][a']=[\epsilon],\]
and it follows from the comment after Remark~\ref{rem:a} that $\mathscr{GP}$ is right cancellative. \end{proof}

 Of course, the corresponding result is true for graph products of left cancellative, and cancellative, monoids.

\medskip

 We now turn our attention to  graph products of {\em semigroups} \cite{nouf:thesis}. This is an essentially different construction to that for monoids, since semigroups are  algebras with a different signature from that for monoids.  The combinatorics of graph products of semigroups are significantly easier to handle than graph products of monoids; they behave in a way more akin to graph monoids, where the only unit in any vertex monoid is the identity.

 As in the  case for monoids, graph products of semigroups are given by a presentation. The difference here is that a presentation denotes a quotient of a free semigroup $X^+$ on a set $X$, where
$X^+=X^*\setminus \{\epsilon\}$ is the set of non-empty words on $X$ under juxtaposition.
Still with $\Gamma=\Gamma(V,E)$, let $\mathcal{S}=\{S_\alpha: \alpha\in V\}$ be a set of semigroups, called {\it vertex semigroups}, such that $S_\beta\cap S_\gamma=\emptyset$ for all $\beta\neq \gamma\in V$.

\begin{defn}\label{defn:gps}
  The {\em graph product} $\mathscr{GPS}=\mathscr{GPS}(\Gamma,\mathcal{S})$ of  $\mathcal{S}$ with respect to $\Gamma$ is defined  by the presentation
 \[\mathscr{GPS}=\langle X\mid R^s\rangle \]
 where  $X=\bigcup_{\alpha\in V}S_\alpha$ and  $R^s=R_v\cup R_e$, with $R_v$ and $R_e$ as in Definition~\ref{defn:graphprodmonoids}.
\end{defn}

As before, identifying  a relation in $R^s$ with a pair in $X^+\times X^+$, we have \[\mathscr{GPS}=X^+/(R^s)^\sharp\] where $(R^s)^\sharp$ is the congruence on $X^+$ generated by $R^s$.

Note that, in Definition~\ref{defn:gps}, even if $S_\alpha$ and $S_\beta$ are monoids for some $\alpha,\beta\in V$, we do not identify their identities in $\mathscr{GPS}$. 
We denote the $(R^s)^\sharp$-class of $x_1\circ \cdots \circ x_n\in X^+$ in $\mathscr{GPS}$ by $\lfloor x_1\circ \cdots \circ x_n\rfloor$.
As we remarked in Section~\ref{sec:intro}, graph products of semigroups do not possess the complexities existing for monoid (or, indeed, group) graph products. Essentially, this is because (with obvious notation), for words $x,y\in X^+$ we have $s(x)\subseteq s(w)$ for any word $w$ such that $\lfloor w \rfloor= \lfloor xy \rfloor$ or $\lfloor yx \rfloor$. Moreover, if $x$ is of minimal length in its $(R^s)^\sharp$-class, then
$|x|\leq  |w|$. Details will appear in \cite{nouf:thesis}.
However, the following result will enable us to use results for graph products of monoids to deduce corresponding results for semigroups.

\begin{Prop}\label{embedding} Let $\mathscr{GPS}$ be the graph product of semigroups $\mathcal{S}=\{S_\alpha: \alpha\in V\}$ with respect to $\Gamma=\Gamma(V, E)$. For each $\alpha\in V$ let $M_\alpha$ be the semigroup
$S_\alpha$ with an identity $\underline{1}_\alpha$ adjoined {\em whether or not $S_\alpha$ is a monoid} and put
$\mathcal{M}=\{M_\alpha: \alpha\in V\}$.

Let $\mathscr{GP}$ be the graph product of monoids $\mathcal{M}$ with respect to $\Gamma$. Then the map $$\theta: \mathscr{GPS}\longrightarrow \mathscr{GP}: \lfloor x_1\circ \hdots x_n\rfloor \mapsto [ x_1\circ \hdots \circ x_n]$$ is a (semigroup) embedding.
\end{Prop}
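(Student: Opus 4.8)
The plan is to verify three things: that $\theta$ is well defined, that it is a semigroup homomorphism, and that it is injective. The first two are essentially formal. Write $X=\bigcup_{\alpha\in V}S_\alpha$ for the generating set of $\mathscr{GPS}$ and $X'=\bigcup_{\alpha\in V}M_\alpha=X\cup I$, where $I=\{\underline{1}_\alpha:\alpha\in V\}$, for that of $\mathscr{GP}$. The inclusion $X\subseteq X'$ induces a semigroup homomorphism $X^+\to\mathscr{GP}$ sending a word $w$ to $[w]$. Since the defining relations $R^s=R_v\cup R_e$ of $\mathscr{GPS}$ are among the defining relations $R=R_{id}\cup R_v\cup R_e$ of $\mathscr{GP}$, each pair in $R^s$ is sent to a pair of equal elements of $\mathscr{GP}$; hence $(R^s)^\sharp$ is contained in the kernel of this homomorphism, and it factors through $\mathscr{GPS}$ to give the well-defined map $\theta$. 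By construction $\lfloor x\rfloor\lfloor y\rfloor=\lfloor x\circ y\rfloor\mapsto[x\circ y]=[x][y]$, so $\theta$ is a homomorphism.

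The substance of the proof is injectivity, and the key observation I would isolate first is the following: if $w\in X^+$, then any reduced form of $w$ in $\mathscr{GP}$ may be reached using only the reduction step (v) and shuffles (e) of Definition~\ref{defn:reduction}, and this reduced form again lies in $X^+$. Indeed, the adjoined identity $\underline{1}_\alpha$ is, by construction, not an element of $S_\alpha$, so for $a,b\in S_\alpha$ the product $ab$ computed in $M_\alpha$ lies in $S_\alpha$ and in particular $ab\neq\underline{1}_\alpha$. Consequently an application of (v) to two letters of a word over $X$ produces a letter again in $X$, a shuffle (e) does not alter the multiset of letters, and so every word obtained from $w$ by (v) and (e) lies in $X^+$ and contains no letter of $I$. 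Thus step (id)---which deletes a letter of $I$---is never applicable, and a reduced word $\overline{w}\in X^+$ with $[\overline{w}]=[w]$ is produced by (v) and (e) alone, as guaranteed by Lemma~\ref{lem:reduce}.

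With this in hand, suppose $\theta(\lfloor u\rfloor)=\theta(\lfloor v\rfloor)$, that is $[u]=[v]$ in $\mathscr{GP}$ with $u,v\in X^+$. Choose reduced forms $\overline{u},\overline{v}\in X^+$ of $u,v$ obtained by (v) and (e) as above. Then $[\overline{u}]=[u]=[v]=[\overline{v}]$, so by Proposition~\ref{shuffle} the reduced words $\overline{u}$ and $\overline{v}$ are shuffle equivalent. Now every manipulation we have used---reducing $u$ to $\overline{u}$ and $v$ to $\overline{v}$ by steps (v) and (e), and the shuffle equivalence of $\overline{u}$ and $\overline{v}$---is an application of relations lying in $R_v\cup R_e=R^s$, hence is valid in $\mathscr{GPS}$. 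Therefore
\[\lfloor u\rfloor=\lfloor\overline{u}\rfloor=\lfloor\overline{v}\rfloor=\lfloor v\rfloor,\]
and $\theta$ is injective, completing the proof that it is a semigroup embedding.

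I expect the only genuine obstacle to be the injectivity argument, and specifically the need to be certain that step (id) is never forced when reducing a word all of whose letters are honest (non-adjoined) elements of the vertex semigroups; this is exactly what the freshness of the adjoined identities $\underline{1}_\alpha$ buys us, and it is the point where the construction $M_\alpha=S_\alpha\cup\{\underline{1}_\alpha\}$ (rather than reusing an existing identity when $S_\alpha$ is already a monoid) matters. Once step (id) is excluded, injectivity falls out of the normal-form result Proposition~\ref{shuffle}, since both shuffle equivalence and the steps (v) and (e) correspond to relations already present in the semigroup presentation.
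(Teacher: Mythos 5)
Your proof is correct, but it reaches injectivity by a genuinely different route from the paper. The paper constructs an explicit left inverse: it defines a monoid morphism $\xi:X^*\rightarrow \mathscr{GPS}^1$ (sending each adjoined identity $\underline{1}_\alpha$ to the adjoined identity $1$ of $\mathscr{GPS}^1$ and each $x\in S_\alpha$ to $\lfloor x\rfloor$), verifies by a case analysis on $R_{id}$, $R_v$ and $R_e$ that $R^{\sharp}\subseteq\ker\xi$, and concludes that the induced map $\overline{\xi}$ satisfies $\theta\overline{\xi}=1_{\mathscr{GPS}}$, so $\theta$ is injective. That argument is purely syntactic and does not invoke the normal form theory at all. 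You instead observe that the freshness of the adjoined identities forces every word over $X=\bigcup_{\alpha}S_\alpha$ to reduce using only steps (v) and (e), so that reduced forms stay inside $X^+$, and then you appeal to Proposition~\ref{shuffle} to get shuffle equivalence of the reduced forms; since steps (v), (e) and shuffles all come from relations in $R^s$, the equality descends to $\mathscr{GPS}$. Both arguments are sound. Your version is shorter given the machinery of Section~3 and makes transparent exactly where the hypothesis that $\underline{1}_\alpha$ is adjoined \emph{even when $S_\alpha$ is already a monoid} is used (it is what rules out step (id)); the cost is that it leans on Proposition~\ref{shuffle}, which ultimately rests on da Costa's normal form theorem, whereas the paper's retraction argument is elementary and self-contained, and the retraction $\overline{\xi}$ it produces is a potentially useful object in its own right.
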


\begin{proof} For clarity here we take $Y=\bigcup_{v\in V}S_v$ and
$X=\bigcup_{v\in V}M_v$. Let a semigroup morphism
\[\kappa:Y^+\rightarrow \mathscr{GP}\]
be defined by its action on generators  as $y\kappa=[y]$ for all $y\in Y$.
We have (with slight abuse of notation) $R^s\subseteq R$, and it follows that $\kappa$ induces the semigroup morphism
$\theta$ as given.

We now show that $\theta$ is one-one. Let $\mathscr{GPS}^1$ be the monoid obtained from $\mathscr{GPS}$ by adjoining an identity $1$. We define a monoid morphism $$\xi: X^*\longrightarrow \mathscr{GPS}^1$$ by its action on generators as
\[x\xi=\left\{ \begin{array} {ll} \lfloor x\rfloor &x\in Y\\
1&x=\underline{1}_\alpha \mbox{~for~some~} \alpha\in V.\end{array}\right.\]
We claim that $R^\sharp\subseteq \ker\xi$.

Let $u, v\in M_\alpha$ for some $\alpha\in V$. If $u, v\in S_\alpha$, then
 $$(u\circ v)\xi=(u\xi)(v\xi)=\lfloor u\rfloor \lfloor v\rfloor =\lfloor u\circ v\rfloor =\lfloor uv\rfloor =(uv)\xi.$$ If $u=\underline{1}_\alpha$ and $v\in S_\alpha$, then
 $$(u\circ v)\xi=(u\xi)(v\xi)=1\lfloor v\rfloor =\lfloor v\rfloor =v\xi=(uv)\xi$$
and dually if $u\in S_\alpha$ and $v=\underline{1}_\alpha$. If $u=v=\underline{1}_\alpha$, then \[(u\circ v)\xi=(u\xi)(v\xi)=11=1=(uv)\xi.\]

Now consider $u\in M_\alpha, v\in M_\beta$ with $(\alpha, \beta)\in E$. If $u=\underline{1}_\alpha$ and $v=\underline{1}_\beta$, then
 $$(u\circ v)\xi=(\underline{1}_\alpha\circ \underline{1}_\beta)\xi=(\underline{1}_\alpha\xi)(\underline{1}_\beta\xi)=11=(\underline{1}_\beta\xi)(\underline{1}_\alpha\xi)=(\underline{1}_\beta\circ \underline{1}_\alpha)\xi=(v\circ u)\xi.$$ If $u=\underline{1}_\alpha$ and $v\in S_\beta$, then $$(u\circ v)\xi=1 \lfloor v\rfloor =\lfloor v\rfloor  1=(v\circ u)\xi$$ and dually if $u\in S_\alpha$ and $v=\underline{1}_\beta$. If $u\in S_\alpha$ and $v\in S_\beta$, then $$(u\circ v)\xi=\lfloor u\rfloor \lfloor v\rfloor =\lfloor u\circ v\rfloor =\lfloor v\circ u\rfloor =\lfloor v\rfloor  \lfloor u\rfloor =\lfloor v\circ u\rfloor \xi.$$

Finally, for $\alpha\in V$, we have $\underline{1}_\alpha\xi=1=\epsilon \xi$.

We have shown that  $R\subseteq \ker\xi$. It follows that
$R^\sharp\subseteq \ker\xi$ and hence
 $$\overline{\xi}: \mathscr{GP}\longrightarrow \mathscr{GPS}^1, \,\, [w]\mapsto w\xi$$ is a well defined morphism.
Further, for any $\lfloor w\rfloor \in \mathscr{GPS}$, we have $$\lfloor w\rfloor \theta\overline{\xi}=[w]\overline{\xi}=w\xi=\lfloor w\rfloor$$ so that $\theta\overline{\xi}=1_{\mathscr{GPS}}$, and hence $\theta$ is  an embedding.
\end{proof}

The result below will appear in \cite{nouf:thesis}.

\begin{Cor}\label{cor:semigroups}
The graph product $\mathscr{GPS}$ of left abundant semigroups $\mathcal{S}=\{S_\alpha: \alpha\in V\}$ with respect to $\Gamma$ is left abundant.
\end{Cor}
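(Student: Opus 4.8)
The plan is to transfer left abundancy from the graph product of monoids to $\mathscr{GPS}$ along the embedding $\theta\colon\mathscr{GPS}\to\mathscr{GP}$ of Proposition~\ref{embedding}, where $\mathscr{GP}$ is the graph product of the monoids $M_\alpha=S_\alpha^1$. First I would record two elementary facts. Since $\ars$ is by definition computed using test elements from $S^1$, the relation $\ars$ on $S_\alpha$ coincides with $\ars$ on $M_\alpha=S_\alpha^1$; as every element of the left abundant semigroup $S_\alpha$ is $\ars$-related to an idempotent of $S_\alpha$, and the adjoined identity is itself an idempotent, $M_\alpha$ is left abundant. Theorem~\ref{thm:mainm} then yields that $\mathscr{GP}$ is left abundant. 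I would also note the simple observation that \emph{no element of $S_\alpha$ is left invertible in $M_\alpha=S_\alpha^1$}: a product of elements of $S_\alpha$ stays in $S_\alpha$ and so can never equal the adjoined identity $1_\alpha$, while $1_\alpha\cdot s=s\neq 1_\alpha$ for $s\in S_\alpha$.

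The first genuine step is to extend $\theta$ to a monoid morphism $\theta^1\colon\mathscr{GPS}^1\to\mathscr{GP}$ by sending the adjoined identity to $[\epsilon]$, and to observe it is injective. Indeed, every reduced form of a word $w\in X^+$ over $\bigcup_\alpha S_\alpha$ again uses only letters of $\bigcup_\alpha S_\alpha$ (step (v) multiplies within a single $S_\alpha$, and no letter of $I$ ever occurs), so images under $\theta$ are classes of nonempty reduced words; hence $[\epsilon]$ is not in the image of $\theta$, and $\theta^1$ is one-one. The key consequence I would prove is a \emph{reflection property}: for $\lfloor p\rfloor,\lfloor q\rfloor\in\mathscr{GPS}$, if $[p]\,\ars\,[q]$ in $\mathscr{GP}$ then $\lfloor p\rfloor\,\ars\,\lfloor q\rfloor$ in $\mathscr{GPS}$. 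To see this, take $x,y\in\mathscr{GPS}^1$; since $\mathscr{GP}$ is a monoid, $[p]\,\ars\,[q]$ gives $\theta^1(x)[p]=\theta^1(y)[p]\Leftrightarrow\theta^1(x)[q]=\theta^1(y)[q]$, and using $\theta^1(x)[p]=\theta^1(xp)$ together with injectivity of $\theta^1$ this reads $xp=yp\Leftrightarrow xq=yq$ in $\mathscr{GPS}^1$, which is exactly $\lfloor p\rfloor\,\ars\,\lfloor q\rfloor$.

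It then remains, for an arbitrary $\lfloor w\rfloor\in\mathscr{GPS}$, to manufacture an idempotent of $\mathscr{GPS}$ that is $\ars$-related to $[w]$ in $\mathscr{GP}$. Here I would apply the decomposition machinery of Section~\ref{sec:abundancy}. A reduced form of $w$ uses only letters of $\bigcup_\alpha S_\alpha$, none of which is left invertible in its vertex monoid by the observation above, so by Lemma~\ref{invertible} the maximal left-invertible prefix $a$ furnished by Lemma~\ref{decomposition} must be empty. Thus Proposition~\ref{prop:mainprop} applies with $a=\epsilon$ and $[a']=[\epsilon]$, and---crucially---when choosing the idempotents $z_j^+$ for the first block $v_1=z_1\circ\cdots\circ z_s$ I would take each $z_j^+$ to be an idempotent of the semigroup $S_{s(z_j)}$ itself, which is available precisely because $S_{s(z_j)}$ is left abundant. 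This gives $[w]\,\ars\,[v_1^+]$ with $[v_1^+]$ idempotent, where $v_1^+=z_1^+\circ\cdots\circ z_s^+$ is a nonempty word over $\bigcup_\alpha S_\alpha$ (nonempty since $w$ does not reduce to $\epsilon$). Hence $[v_1^+]=\lfloor v_1^+\rfloor\theta$ with $\lfloor v_1^+\rfloor$ an idempotent of $\mathscr{GPS}$, and the reflection property yields $\lfloor w\rfloor\,\ars\,\lfloor v_1^+\rfloor$ in $\mathscr{GPS}$.

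The main obstacle is this final paragraph: one must ensure that the idempotent produced by the abundancy construction in $\mathscr{GP}$ actually lies in the subsemigroup $\mathscr{GPS}$, that is, that it involves neither adjoined identities nor a left-invertible prefix. This is exactly the point where left abundancy of the vertex \emph{semigroups} (rather than mere abundancy of the monoids $M_\alpha$) is essential, together with the fact that letters from $S_\alpha$ are never left invertible in $S_\alpha^1$; the reflection property then does the bookkeeping needed to descend $\ars$ from $\mathscr{GP}$ back to $\mathscr{GPS}$.
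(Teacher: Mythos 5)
Your proposal is correct and follows essentially the same route as the paper: adjoin identities to form left abundant monoids $M_\alpha$ on which $\ars$ agrees with $\ars$ on $S_\alpha$, use the embedding of Proposition~\ref{embedding}, observe that no letter from a vertex semigroup is left invertible so the prefix $a$ in Proposition~\ref{prop:mainprop} is empty, choose the idempotents $z_j^+$ inside the vertex semigroups, and descend $\ars$ back to $\mathscr{GPS}$. Your explicit ``reflection property'' via the injective morphism $\theta^1$ is just a spelled-out version of the paper's one-line appeal to the fact that $\ars$ in an oversemigroup implies $\ars$ in the subsemigroup.
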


\begin{proof}
Let $Y=\bigcup_{\alpha\in V}S_\alpha$ and $X=\bigcup_{\alpha\in V}M_\alpha$, where $M_\alpha=S_\alpha\cup \{\underline{1}_\alpha\}$ as in Proposition \ref{embedding}.
 Since each $S_\alpha$ is left abundant, it is easy to check that the same is true of each $M_\alpha$,
and, moreover, if $u,v\in S_\alpha$ then $u\,\mathcal{R}^*\, v$ in $S_\alpha$ if and only if
$u\,\mathcal{R}^*\, v$  in $M_\alpha$.

 It follows from Proposition \ref{embedding} that $\mathscr{GPS}$ is isomorphic to a subsemigroup $\mathscr{N}$ of $\mathscr{GP}$, where $$\mathscr{N}=\{[x_1\circ \cdots \circ x_n]: x_i \in Y, 1\leq i\leq n\}$$ and $$\varphi: \mathscr{GPS}\longrightarrow \mathscr{N}, \lfloor x_1\circ \cdots \circ x_n\rfloor\mapsto [x_1\circ \cdots \circ x_n]$$ is an isomorphism.

 Let $x=x_1\circ \cdots \circ x_n\in Y^+$ and let $v=v_1\circ \cdots \circ v_m\in X^*$ be a left Foata normal form of $x$ with blocks  $v_i$,   $1\leq i\leq m$. Since the only left or right invertible element of
 any vertex monoid $M_\alpha$ is $\underline{1}_\alpha$, we have that
 $v\in Y^+$  and  $v$ contains no left invertible letters.
 Choosing $v_1^+\in Y^+$ as in Proposition~\ref{prop:mainprop} and noticing that
 $a=\epsilon$ in that result, we have that
 $[x]=[v]\,\mathcal{R}^*\, [v_1^+]$ in  $\mathscr{GP}$ and  hence in $\mathscr{N}$. It follows that
 $\lfloor x\rfloor \,\mathcal{R}^*\, \lfloor v_1^+\rfloor$ in $\mathscr{GPS}$.
\end{proof}

The proof of the following result is similar to that of Corollary \ref{cor:semigroups}.
\begin{Cor}\label{fountain-cor:semigroups}
The graph product $\mathscr{GPS}$ of left Fountain semigroups $\mathcal{S}=\{S_\alpha: \alpha\in V\}$ is a left Fountain semigroup.
\end{Cor}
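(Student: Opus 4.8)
The plan is to mirror the proof of Corollary~\ref{cor:semigroups}, systematically replacing $\mathcal{R}^*$ by $\widetilde{\mathcal{R}}$ and invoking the Fountainicity machinery of Section~\ref{sec:Fountainicity} in place of the abundancy results. First I would set $Y=\bigcup_{\alpha\in V}S_\alpha$ and $X=\bigcup_{\alpha\in V}M_\alpha$, where $M_\alpha=S_\alpha\cup\{\underline{1}_\alpha\}$ is the monoid obtained by adjoining a fresh identity, as in Proposition~\ref{embedding}. The preliminary algebraic fact to establish is that each $M_\alpha$ is left Fountain and that $\widetilde{\mathcal{R}}$ is undisturbed by the adjunction: for $u,v\in S_\alpha$ we have $u\,\widetilde{\mathcal{R}}\,v$ in $S_\alpha$ if and only if $u\,\widetilde{\mathcal{R}}\,v$ in $M_\alpha$. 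This is immediate from the observation that $\underline{1}_\alpha$ fixes every element, so the extra idempotent $\underline{1}_\alpha\in E(M_\alpha)$ imposes no constraint in the defining biconditional $ea=a\Leftrightarrow eb=b$; consequently any idempotent of $S_\alpha$ witnessing $u\,\widetilde{\mathcal{R}}\,v$ in $S_\alpha$ still works in $M_\alpha$, and in particular each $M_\alpha$ is left Fountain.

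By Proposition~\ref{embedding}, $\mathscr{GPS}$ is isomorphic, via $\varphi$, to the subsemigroup $\mathscr{N}=\{[x_1\circ\cdots\circ x_n]:x_i\in Y\}$ of the monoid graph product $\mathscr{GP}=\mathscr{GP}(\Gamma,\mathcal{M})$, to which Theorem~\ref{thm:fountain} applies. Given $x=x_1\circ\cdots\circ x_n\in Y^+$, I would take a left Foata normal form $v=v_1\circ\cdots\circ v_m$ of $[x]$. Since the only left or right invertible element of any $M_\alpha$ is $\underline{1}_\alpha$, the decomposition $[x]=[a][v]$ furnished by Lemma~\ref{decomposition} has $a=\epsilon$, with $v\in Y^+$ and $v_1$ containing no left invertible letters. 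Writing $v_1=x_1\circ\cdots\circ x_k$ and using left Fountainicity of each $S_{s(x_j)}$ directly, I would choose idempotents $x_j^+\in E(S_{s(x_j)})\subseteq Y$ with $x_j\,\widetilde{\mathcal{R}}\,x_j^+$ in $S_{s(x_j)}$, hence (by the preservation above) in $M_{s(x_j)}$; this guarantees $v_1^+:=x_1^+\circ\cdots\circ x_k^+\in Y^+$, so that $[v_1^+]\in\mathscr{N}$. Proposition~\ref{step-1} together with Corollary~\ref{Rtilde}, applied with the trivial prefix $a=\epsilon$ exactly as in the proof of Theorem~\ref{thm:fountain}, then yields $[x]=[v]\,\widetilde{\mathcal{R}}\,[v_1]\,\widetilde{\mathcal{R}}\,[v_1^+]$ in $\mathscr{GP}$, with $[v_1^+]$ idempotent.

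The final step is to transfer this relation back to $\mathscr{GPS}$. Because $[v_1^+]$ and $[x]$ both lie in $\mathscr{N}$ and $E(\mathscr{N})\subseteq E(\mathscr{GP})$, the statement $[x]\,\widetilde{\mathcal{R}}\,[v_1^+]$ valid in $\mathscr{GP}$ descends to $\mathscr{N}$: passing to the subsemigroup only shrinks the family of idempotents quantified over in the definition of $\widetilde{\mathcal{R}}$, so a universal biconditional holding over all of $E(\mathscr{GP})$ holds a fortiori over $E(\mathscr{N})$. Applying the isomorphism $\varphi^{-1}$, which preserves $\widetilde{\mathcal{R}}$ as an abstractly defined relation, then gives $\lfloor x\rfloor\,\widetilde{\mathcal{R}}\,\lfloor v_1^+\rfloor$ in $\mathscr{GPS}$ with $\lfloor v_1^+\rfloor$ idempotent, whence $\mathscr{GPS}$ is left Fountain. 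The one genuinely delicate point—which I would flag as the main obstacle—is precisely this descent of $\widetilde{\mathcal{R}}$ to $\mathscr{N}$: unlike $\mathcal{R}^*$, the relation $\widetilde{\mathcal{R}}$ is not in general inherited by subsemigroups in both directions, and one must verify that the direction required here (from the larger idempotent set of $\mathscr{GP}$ to the smaller one of $\mathscr{N}$) is the benign one, together with the care needed in choosing each $x_j^+$ \emph{inside} $S_{s(x_j)}$ so that $[v_1^+]$ genuinely lands in $\mathscr{N}$.
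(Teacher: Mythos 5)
Your proposal is correct and follows essentially the route the paper intends: the paper gives no details here, stating only that the proof is ``similar to that of Corollary~\ref{cor:semigroups}'', and your argument is exactly that proof with $\mathcal{R}^*$ replaced by $\widetilde{\mathcal{R}}$, Proposition~\ref{prop:mainprop} replaced by Proposition~\ref{step-1} together with Corollary~\ref{Rtilde} (with $a=\epsilon$), and the idempotents $x_j^+$ chosen inside the vertex semigroups so that $[v_1^+]\in\mathscr{N}$. Your explicit verification that $\widetilde{\mathcal{R}}$ passes from $\mathscr{GP}$ down to the subsemigroup $\mathscr{N}$ (because one quantifies over the smaller idempotent set $E(\mathscr{N})\subseteq E(\mathscr{GP})$), and that adjoining $\underline{1}_\alpha$ does not disturb $\widetilde{\mathcal{R}}$ on $S_\alpha$, correctly supplies the two points where the Fountain case genuinely differs from the abundant one.
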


Of course, the right (two-sided) versions of Corollaries \ref{cor:semigroups} and ~\ref{fountain-cor:semigroups} also hold.

We  remarked in Section~\ref{sec:preliminaries} that free products and restricted direct products of  monoids can be regarded as special cases of graph products of monoids. We therefore have the following result.

\begin{Cor}\label{cor:fprdpm} The
free product $\mathscr{FPM}$ and the restricted direct product $\oplus_{\alpha\in V}M_\alpha$ of left abundant monoids (resp. left Fountain monoids) $\mathcal{M}=\{M_\alpha: \alpha\in V\}$ are left abundant (resp. left Fountain).
\end{Cor}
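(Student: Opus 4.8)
The plan is to observe that both constructions named in the statement are already special cases of the graph product construction, so that the corollary follows by simply identifying the appropriate graph and invoking the two main theorems; no genuinely new argument is needed.

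First I would recall the two remarks recorded in Section~\ref{sec:preliminaries}. A free product of the monoids $\mathcal{M}=\{M_\alpha:\alpha\in V\}$ is precisely the graph product $\mathscr{GP}(\Gamma,\mathcal{M})$ formed with respect to the edgeless graph $\Gamma=\Gamma(V,\emptyset)$, so that $\mathscr{FPM}=\mathscr{GP}(\Gamma(V,\emptyset),\mathcal{M})$. Dually, the restricted direct product $\Sigma_{\alpha\in V}M_\alpha$ is the graph product $\mathscr{GP}(\Gamma,\mathcal{M})$ formed with respect to the complete graph $\Gamma=\Gamma(V,V_2)$, so that $\Sigma_{\alpha\in V}M_\alpha=\mathscr{GP}(\Gamma(V,V_2),\mathcal{M})$. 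In both cases the vertex monoids are exactly the members of $\mathcal{M}$.

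Next I would apply the main results. If each $M_\alpha$ is left abundant, then Theorem~\ref{thm:mainm} applied to the graphs $\Gamma(V,\emptyset)$ and $\Gamma(V,V_2)$ yields at once that both $\mathscr{FPM}$ and $\Sigma_{\alpha\in V}M_\alpha$ are left abundant; similarly, if each $M_\alpha$ is left Fountain, then Theorem~\ref{thm:fountain} applied to the same two graphs shows that both are left Fountain. This gives the statement in full.

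In truth there is no real obstacle to overcome here: the entire substance of the result resides in Theorems~\ref{thm:mainm} and \ref{thm:fountain}, and the corollary is obtained purely by specialising the underlying graph to the two extreme cases $E=\emptyset$ and $E=V_2$. The only point requiring the briefest care is to note that the passage from the abstract definitions of free and restricted direct product to their presentations as graph products is exactly the content of the two remarks in Section~\ref{sec:preliminaries}, so that the hypotheses of the theorems are met verbatim.
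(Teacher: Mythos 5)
Your proposal is correct and is exactly the argument the paper intends: it identifies the free product with the graph product over $\Gamma(V,\emptyset)$ and the restricted direct product with the graph product over $\Gamma(V,V_2)$, as recorded in Section~\ref{sec:preliminaries}, and then applies Theorems~\ref{thm:mainm} and~\ref{thm:fountain}. No further comment is needed.
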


 \begin{Rem} The corresponding statement to that of Corollary~\ref{cor:fprdpm} is true for semigroups and in the right/two-sided
 case for both monoids and semigroups.
\end{Rem}

We finish this paper by posing the following open problems. Let $M$ be a monoid. We have commented that the relations $\ar$ and $\ars$ are left  congruences on $M$ but, in general,  this need not be true of $\art$.  Since $\art$ being a left congruence is an important property in many structural results for left Fountain monoids and semigroups we first pose:

\begin{Question} Let
 $\mathscr{GP}=\mathscr{GP}(\Gamma,\mathcal{M})$ be a graph product of   monoids $\mathcal{M}=\{M_\alpha: \alpha\in V\}$ with respect to $\Gamma$, where $\art$ is a left congruence on each $M_{\alpha}$. Is $\art$ a left congruence on  $\mathscr{GP}$?
\end{Question}

The above could first be asked in the corresponding case for semigroups, and starting with the vertex semigroup being left Fountain.

A monoid is {\em inverse} if it is regular and its idempotents commute. Inverse monoids form a variety not of monoids but of {\em unary monoids},
that is, monoids equipped with an additional unary operation. In this case the unary operation is given by $a\mapsto a^{-1}$, where $a^{-1}$ is the unique element such that $a=aa^{-1}a$ and $a^{-1}=a^{-1}aa^{-1}$. The notion of a graph product of {\em inverse monoids} (see \cite{daCosta:2003,diekert:2008}, at least for the case where the vertex monoids are free) is analogous to that for monoids and semigroups, and is obtained as a quotient of a free inverse monoid, by relations given as for $R$; from its very construction, it is inverse. A  monoid is {\em left adequate} if it is left abundant and its idempotents commute. These are the first non-regular analogues of inverse monoids, and  form quasivarieties of unary monoids.  Here the unary operation is $a\mapsto a^+$ where $a^+$ is the {\em unique} idempotent in the $\ars$-class of $a$. We therefore ask the following question, which can be interpreted in more than one way. Of course, one could also begin with the semigroup case.

\begin{Question} Is the graph product of left adequate monoids left adequate?
\end{Question}

Finally, we would hope that using left Foata normal forms and other reduction techniques developed in this article we could
both find new approaches to old results (such as calculating centralizers in graph products of groups \cite{barkauskas:2007}) and  extend these to the monoid case. For example, we ask:

\begin{Question} Determine centralisers in graph products of  monoids.
\end{Question}

\section*{Acknowledgement} The authors are grateful to a careful referee for providing some very useful remarks which have improved the readability of the final version.

\end{document}